\theoremstyle{plain}
\newtheorem{thm}{Theorem}[section]
\newtheorem{lem}[thm]{Lemma}
\theoremstyle{definition}
\newtheorem{rmk}[thm]{Remark}
\def\enne{\mathbb{N}}
\def\zeta{\mathbb{Z}}
\def\erre{\mathbb{R}}
\def\H{\mathcal{H}}
\def\V{\mathcal{V}}
\def\W{\mathcal{W}}
\def\eps{\varepsilon}
\def\beq{\begin{equation}}
\def\eeq{\end{equation}}
\def\to{\rightarrow}
\def\wto{\rightharpoonup}
\def\wstarto{\stackrel{*}{\rightharpoonup}}
\def\embed{\hookrightarrow}
\def\norm #1{\left\|#1\right\|}
\def\abs #1{\left|#1\right|}
\def\ip #1#2{\left<#1,#2\right>}
\title{\huge\rm Existence and uniqueness of solutions\\to singular Cahn-Hilliard equations\\
	with nonlinear viscosity terms\\
	and dynamic boundary conditions\footnote{{\bf Acknowledgments.}
	The author is very grateful to Pierluigi Colli for his expert support and 
	fundamental advice. The author is also
	thankful for the warm hospitality and excellent working conditions at the
	Dipartimento di Matematica "F.~Casorati", Universit\`a di Pavia (Italy), where a part of this 
	work was written.}
	\\[.5cm]}
\author{{\large\sc Luca Scarpa}\\
	{\small Department of Mathematics, University College London}\\
	{\small Gower Street, London WC1E 6BT, United Kingdom}\\
	{\small E-mail: \texttt{luca.scarpa.15@ucl.ac.uk}}
	}
\date{}
\begin{document}

\maketitle  

\begin{abstract}
  We prove global existence and uniqueness of solutions to a Cahn-Hilliard
  system with nonlinear viscosity terms and nonlinear dynamic boundary conditions.
  The problem is highly nonlinear, characterized by four nonlinearities and 
  two separate diffusive terms, all acting in the interior of the domain or on its boundary.
  Through a suitable approximation of the problem based on abstract theory 
  of doubly nonlinear evolution equations, existence and uniqueness of solutions
  are proved using compactness and monotonicity arguments.
  The asymptotic behaviour of the solutions as the the diffusion operator on the boundary
  vanishes is also shown.
  \\[.5cm]
  {\bf AMS Subject Classification:} 35D30, 35D35, 35K52, 35K61, 80A22\\[.5cm]
  {\bf Key words and phrases:} Cahn-Hilliard system, dynamic boundary conditions, nonlinear viscosity,
  					       existence of solutions, uniqueness
\end{abstract}

\pagestyle{myheadings}
\newcommand\testopari{\sc Luca Scarpa}
\newcommand\testodispari{\sc Cahn-Hilliard equations with nonlinear viscosity and D.B.C.}
\markboth{\testodispari}{\testopari}


\thispagestyle{empty}

\section{Introduction}
\setcounter{equation}{0}
\label{intro}

The viscous Cahn-Hilliard equation can be written in its general form as
\[
  \partial_t u - \Delta \mu = 0\,, \qquad
  \mu = \alpha(\partial_t u) - \Delta u + \beta(u) + \pi(u) - g \qquad \text{in }(0,T)\times\Omega\,,
\]
where the unknown $u$ and $\mu$ represent the so-called order parameter and
chemical potential, respectively. Such equation is fundamental in the
phase-separation of a binary alloy, for example, and describes important
qualitative behaviour like the so-called spinodal decomposition:
we refer to the classical works \cite{cahn-hill, novick-cohen, maier-stan1, maier-stan2}
for a physical derivation of the model and some studies on the spinodal decomposition process.
Here, $\Omega$ is smooth bounded domain in $\erre^N$ ($N=2,3$)
with smooth boundary $\Gamma$, and $T>0$ is the final time.
As usual, the term $\beta+\pi$ represents the derivative of a double-well potential, $g$ is a given source
and $\alpha$ is a monotone function acting on $\partial_t u$.
While in the original model $\alpha$ is a linear function, some generalizations
have been proposed where the behaviour of $\alpha$ is of nonlinear type:
see in this direction \cite{gurtin}.

In the present contribution, we study the equation above coupled with 
the homogenous Neumann boundary condition for $\mu$
\[
  \partial_{\bf n}\mu = 0 \qquad\text{in } (0,T)\times\Gamma\,,
\]
which is very natural and ensures the conservation of the mass in the bulk, 
and a second-order doubly nonlinear dynamic boundary condition for $u$
\[
  \alpha_\Gamma(\partial_t u) + \partial_{\bf n}u - \eps\Delta_\Gamma u + \beta_{\Gamma}(u) + \pi_\Gamma(u) = g_\Gamma
  \qquad\text{in } (0,T)\times\Gamma\,.
\]
Here, $\eps>0$ is a fixed constant, $\Delta_\Gamma$ is the usual Laplace-Beltrami
operator on $\Gamma$, $g_\Gamma$ is a prescribed source on the boundary and 
the term $\beta_\Gamma + \pi_\Gamma$ represents the derivative of a double-well potential on the boundary,
which may possibly differ from the one in the interior of the domain $\Omega$.
Similarly, $\alpha_\Gamma$ is a generic monotone function.
Dynamic boundary conditions have been recently proposed by physicists 
in order to take into account also possible interactions with the walls of a confined system:
for a physical motivation of this choice and some
studies on parabolic-type equations with dynamic boundary conditions
we mention the works \cite{fish-spinod, kenz-spinod} and
\cite{gal-DBC, gal-DBC2, gal-grass-ACDBC}.

Cahn-Hilliard equations with dynamic boundary have been widely studied in the last years
in the classical setting in which the viscosity terms depend linearly on the time-derivative
of the order parameter. This framework corresponds in our notation to the choices
$\alpha=a I$ and $\alpha_\Gamma= b I$, with $a,b>0$ given constants
and $I$ the identity on $\erre$.
Let us mention
in this direction the works \cite{mir-zel, col-fuk-eqCH,
colli-fuk-CHmass, col-gil-spr, gil-mir-sch, gil-mir-sch-longtime, col-scar} 
dealing with well-posedness, regularity, long-time behaviour of solutions and asymptotics,
\cite{col-far-hass-gil-spr, col-gil-spr-contr, col-gil-spr-contr2} for some corresponding 
optimal control problems, and \cite{cal-colli, colli-sprek-optACDBC} focused 
specifically on Allen-Cahn equations.

On the other hand, an important area of interest has been equally developed 
on the study of Cahn-Hilliard equations with possibly nonlinear viscosity terms:
the reader can refer to the contributions \cite{mir-sch} for existence-uniqueness and long-time behaviour
under classical homogeneous Neumann conditions, and to
\cite{bcst1} for a detailed thermodynamical derivation of the model
and well-posedness in the case of Dirichlet conditions for the chemical potential. 
Let us also mention the work \cite{mir-zel2} dealing with 
a doubly nonlinear Cahn-Hilliard equation with a different type
of nonlinearity in the viscosity, and the classical contributions
\cite{colli-visin, sch-seg-stef} on a variational approach to 
abstract doubly nonlinear equations.
As the reader may notice, in this case the attention is mainly focused on the presence of a double nonlinearity in the governing equation, 
and, consequently, the prescription on the boundary conditions remains quite broad and classical
(homogeneous Neumann or Dirichlet type).

The aim of this paper is to provide some unifying existence and uniqueness results
for the more general case when both dynamic boundary conditions and 
nonlinear viscosity terms are present in the system. 
From the physical point of view, the presence of dynamic boundary conditions and 
nonlinear viscosity terms is more accurate, and allows for a more
genuine description on the process. On the other side,
from the mathematical perspective, the model is much more difficult
to handle and to study. Indeed,
this specific description gives rise to a system with
$4$ nonlinearities: $\alpha$ and $\alpha_\Gamma$ acting on the time-derivatives
and representing the viscosities, and $\beta$ and $\beta_\Gamma$
acting on the order parameter. Besides the non-triviality of the model,
the presence of several nonlinearities is strongly stimulating and challenging. 
In order to include also
possibly non-smooth potentials in our analysis, the nonlinearities are
assumed to be possibly multivalued (maximal monotone) graphs.

To summarize, we are concerned with the following system
\begin{align}
  \label{eq1}
  \partial_t u - \Delta \mu = 0 \qquad&\text{in } (0,T)\times\Omega\,,\\
  \label{eq2}
  \mu \in \alpha(\partial_t u) - \Delta u + \beta(u) + \pi(u) - g \qquad&\text{in } (0,T)\times\Omega\,,\\
  \label{bound}
  u=v\,, \quad \partial_{\bf n}\mu=0 \qquad&\text{in } (0,T)\times\Gamma\,,\\
  \label{eq3}
  \alpha_\Gamma(\partial_tv) + \partial_{\bf n}u - \eps\Delta_\Gamma v + \beta_\Gamma(v) + \pi_\Gamma(v) \ni g_\Gamma
  \qquad&\text{in } (0,T)\times\Gamma\,,\\
  \label{init}
  u(0)=u_0\,, \quad v(0)=v_0 \qquad&\text{in } \Omega\,.
\end{align}
The paper is organized as follows. In Section~\ref{main_results}
we state the main hypotheses of the work and the main results, commenting
on the different set of assumptions that are in play.
Section~\ref{approx} in entirely focused on the construction of suitable approximated solutions, 
and is based on some abstract results on doubly nonlinear evolution equations.
In Sections~\ref{proof1}, \ref{proof2} and \ref{proof3} we present the proofs of the three
existence results of the paper, while Section~\ref{proof4} contains the proof of the uniqueness
result. Finally, in Section~\ref{proof5} we give a proof of the asymptotic limit as $\eps\to0$, 
recovering in this way a solution to the system corresponding to the case $\eps=0$.


\section{Setting, assumptions and main results}
\setcounter{equation}{0}
\label{main_results}

Throughout the paper, $\Omega\subseteq\erre^N$ ($N=2,3$) is a smooth bounded domain
with smooth boundary $\Gamma$ and $T>0$ is a fixed final time.
We use the notation $Q_t:=(0,t)\times\Omega$ and $\Sigma_t:=(0,t)\times\Gamma$ for every $t\in(0,T]$,
with $Q:=Q_T$ and $\Sigma:=\Sigma_T$.
The outward normal unit vector on $\Gamma$, the tangential gradient and the Laplace-Beltrami 
operator on $\Gamma$ are denoted by ${\bf n}$, $\nabla_\Gamma$ and $\Delta_\Gamma$, respectively.
We shall also use the symbol $\Delta_{\bf n}$ to denote the Laplace
operator with homogeneous Neumann conditions.
Moreover, $\eps$ is a positive fixed number.

We introduce the spaces
\begin{gather*}
  H:=L^2(\Omega)\,, \qquad H_\Gamma:=L^2(\Gamma)\,, \qquad \H:=H \times H_\Gamma\,,\\
  V:=H^1(\Omega)\,, \qquad 
  V_{\Gamma}:=H^1(\Gamma)\,,
  \qquad \V:=\{(x,y)\in V\times V_{\Gamma}: x=y \text{ on } \Gamma\}\,,\\
  W:=H^2(\Omega)\,, \qquad
  W_\Gamma:=H^2(\Gamma)\,, \qquad \W:=\{(x,y)\in W\times W_{\Gamma}: x=y \text{ on } \Gamma\}\,,\\
  W_{\bf n}:=\{x \in W: \partial_{\bf n}x=0 \text{ on } \Gamma\}\,.
\end{gather*}
As usual, we identify $H$ and $H_\Gamma$ with their own duals $H^*$ and $H_\Gamma^*$,
so that $H\embed V^*$ and $H_\Gamma\embed V_{\Gamma}^*$ with the inclusions
given by the inner products of $H$ and $H_\Gamma$, respectively. Moreover, we 
denote all norms and duality pairings by the symbols $\norm{\cdot}$ and  $\ip{\cdot}{\cdot}$, respectively,
with a subscript specifying the spaces in consideration.

For any element $y\in V^*$ we define the mean
\[
  y_\Omega:=\frac1{|\Omega|}\ip{y}{1}\,.
\]
Moreover, recall that a norm on $V$, equivalent to the usual one, is given by
\beq\label{V_eq}
  |x|^2_V:=\norm{\nabla x}_H^2 + |x_\Omega|^2\,, \quad x\in V\,,
\eeq
and that the Laplace operator with Neumann conditions is an isomorphism 
between the null-mean elements in $V$ and the null-mean elements in $V^*$, so that 
it is well defined its inverse
\[
  \mathcal N:\{y\in V^*: \; y_\Omega=0\}\to\{y\in V: \;y_\Omega=0\}\,,
\]
where, for any $y\in V^*$ with $y_\Omega=0$, $\mathcal Ny$ is the unique element in $V$ with null mean such that
\[
  \int_\Omega\nabla\mathcal Ny\cdot\nabla\varphi = \ip{y}{\varphi} \quad\forall\,\varphi\in V\,.
\]

Let us specify the main hypotheses on the data: these will be in order in the whole work and 
will not be recalled explicitly.

We assume that
\[
  \widehat{\alpha}, \widehat\alpha_\Gamma, \widehat{\beta}, \widehat\beta_\Gamma:\erre\to[0,+\infty]
\] 
are proper, convex and lower semicontinuous functions such that
\[
 0 = \widehat\alpha(0)=\widehat\alpha_\Gamma(0)=\widehat\beta(0)=\widehat\beta_\Gamma(0)\,,
\]
and we set
\[
 \alpha:=\partial\widehat\alpha\,, \quad \alpha_\Gamma:=\partial\widehat\alpha_\Gamma\,, \quad
 \beta:=\partial\widehat\beta\,, \quad \beta_\Gamma:=\partial\widehat\beta_\Gamma\,.
\]
Moreover, let
\[
  \pi,\pi_\Gamma:\erre\to\erre \quad\text{Lipschitz continuous}\,, \qquad \pi(0)=\pi_\Gamma(0)=0\,,
\]
and denote by $C_\pi$ and $C_{\pi_\Gamma}$
their respective Lipchitz constants.
We shall always assume that $\alpha_\Gamma$ is coercive and 
that $\beta$ is controlled by $\beta_\Gamma$, i.e.
\begin{gather}
\label{coerc2}
  \exists\, b_1, b_2>0:\quad rs\geq b_1|s|^2-b_2 \quad\forall\,s\in D(\alpha_\Gamma)\,,\quad\forall\,r\in\alpha_\Gamma(s)\,,\\
\label{dom1}
  D(\beta_\Gamma)\subseteq D(\beta) \quad\text{and}\quad
  \exists\,c>0:\abs{\beta^0(s)}\leq c\left(1+\abs{\beta_\Gamma^0(s)}\right) \quad\forall\,s\in D(\beta_\Gamma)\,.
\end{gather}
These hypotheses will be always in order and will not be recalled explicitly throughout the paper.
Note that \eqref{dom1} is typically not new in the literature dealing with Allen-Cahn and Cahn-Hilliard equations with dynamic boundary
conditions: see for example \cite{cal-colli, col-gil-spr}. Moreover, condition \eqref{coerc2} appears also very natural
if we recall that the evolution on the boundary is of order $2$ in space,
hence of Allen-Cahn type.

\medskip

The first existence result that we prove requires additional assumptions on
the graphs $\alpha$ and $\alpha_\Gamma$: in particular, 
and their growth at infinity has to be no more than linear and
also $\alpha$ has to be coercive. On the other side, 
no further hypothesis is made on $\beta$ and $\beta_\Gamma$.
\begin{thm}\label{thm1}
  Suppose that
  \begin{gather}
  \label{g}
  g \in L^2(0,T; H)\,, \qquad
  g_\Gamma \in L^2(0,T; H_\Gamma)\,,\\
  \label{u0}
  u_0 \in V\,,\quad u_{0|\Gamma}\in V_{\Gamma}\,, 
  \qquad \widehat\beta(u_0) \in L^1(\Omega)\,, \qquad \widehat\beta_\Gamma({u_0}_{|\Gamma})\in L^1(\Gamma)\,,\\
  \label{u0_mean}
  (u_0)_\Omega \in \operatorname{Int} D(\beta_\Gamma)\,,\\
  \label{alpha_sub}
  D(\alpha)=D(\alpha_\Gamma)=\erre 
  \quad\text{and}\quad\exists\,L>0: \max\{\abs{\alpha^0(s)}, \abs{\alpha_\Gamma^0(s)}\}\leq L\left(1+|s|\right) \quad\forall\,s\in \erre\,,\\
  \label{coerc1}
  \exists\, a_1, a_2>0:\quad rs\geq a_1|s|^2-a_2 \quad\forall\,s\in D(\alpha)\,,\quad\forall\,r\in\alpha(s)\,.
  \end{gather}
  Then, there exists a septuple $(u,v,\mu,\eta,\xi,\eta_\Gamma,\xi_\Gamma)$ such that
  \begin{gather}
    \label{u}
    u \in L^\infty(0,T; V)\cap H^1(0,T; H)\cap L^2(0,T; W)\,,\\
    \label{v}
    v \in L^\infty(0,T; V_{\Gamma})\cap H^1(0,T; H_\Gamma)\cap L^2(0,T; W_\Gamma)\,,\\
    \label{mu}
    \mu \in L^2(0,T; W_{\bf n})\,,\\
    \label{xi_eta}
    \eta,\xi \in L^2(0,T; H)\,, \qquad
    \eta_\Gamma,\xi_\Gamma \in L^2(0,T; H_\Gamma)\,,\\
    \label{cond}
    v=u_{|\Gamma} \quad\text{a.e.~in } \Sigma\,, \qquad u(0)=u_0\,,\\
    \label{incl}
    \eta\in\alpha(\partial_t u)\,, \;\; \xi \in \beta(u)\quad\text{a.e.~in } Q\,, \qquad
    \eta_\Gamma \in \alpha_\Gamma(\partial_t v)\,, \;\;
     \xi_\Gamma \in\beta_\Gamma(v)\quad\text{a.e.~in } \Sigma
  \end{gather}
  and satisfying
  \begin{gather}
    \label{1}
    \partial_t u - \Delta \mu = 0\,, \\
    \label{2}
    \mu=\eta - \Delta u + \xi + \pi(u) - g\,,\\
    \label{3}
    \eta_\Gamma + \partial_{\bf n} u - \eps\Delta_\Gamma v + \xi_\Gamma + \pi_\Gamma(v) = g_\Gamma\,.
  \end{gather}
\end{thm}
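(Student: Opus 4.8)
The plan is to build approximate solutions via a two-parameter scheme and then pass to the limit using energy estimates, compactness, and monotonicity. First I would introduce a Yosida-type regularization: replace $\beta$ by $\beta_\lambda := \partial\widehat\beta_\lambda$ (and $\beta_\Gamma$ by $\beta_{\Gamma,\lambda}$), the Moreau--Yosida approximations, so that the only remaining genuine nonlinearities are $\alpha,\alpha_\Gamma$ (now with everywhere-defined, Lipschitz regularized or directly usable graphs since $D(\alpha)=D(\alpha_\Gamma)=\erre$). The regularized system then has the structure of a \emph{doubly nonlinear evolution equation} of the form $A(\partial_t w) + B w \ni f$ on the product space $\H$, where $A$ is the (maximal monotone, coercive, linearly bounded) operator induced by $(\alpha,\alpha_\Gamma)$ composed with the inverse Laplacian $\mathcal N$ in the bulk and the identity on the boundary, and $B$ is the subdifferential of the convex functional
\[
  \Phi_\lambda(u,v) := \tfrac12\norm{\nabla u}_H^2 + \tfrac\eps2\norm{\nabla_\Gamma v}_{H_\Gamma}^2 + \int_\Omega\widehat\beta_\lambda(u) + \int_\Gamma\widehat\beta_{\Gamma,\lambda}(v)
\]
on $\V$. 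This is precisely the setting to which the abstract results cited in the excerpt (of the type in \cite{colli-visin, sch-seg-stef}) apply, giving for each $\lambda>0$ a solution $(u_\lambda,v_\lambda,\mu_\lambda)$ with the expected regularity, where the chemical potential is recovered as $\mu_\lambda = \mathcal N(\partial_t u_\lambda) + (\mu_\lambda)_\Omega$ and the mean is controlled via \eqref{eq2} tested against $1$.

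Next I would derive the $\lambda$-uniform a priori estimates. The basic one: test \eqref{1} by $\mu_\lambda$, \eqref{2} by $\partial_t u_\lambda$, \eqref{3} by $\partial_t v_\lambda$, and add. The $-\Delta u_\lambda$ and $-\eps\Delta_\Gamma v_\lambda$ terms produce the time derivative of $\tfrac12\norm{\nabla u_\lambda}_H^2 + \tfrac\eps2\norm{\nabla_\Gamma v_\lambda}_{H_\Gamma}^2$; the $\xi_\lambda,\xi_{\Gamma,\lambda}$ terms give $\frac{d}{dt}\big(\int_\Omega\widehat\beta_\lambda(u_\lambda)+\int_\Gamma\widehat\beta_{\Gamma,\lambda}(v_\lambda)\big)$; the viscosity terms give, by the coercivity assumptions \eqref{coerc1} and \eqref{coerc2}, a good positive contribution $a_1\norm{\partial_t u_\lambda}_H^2 + b_1\norm{\partial_t v_\lambda}_{H_\Gamma}^2$ up to constants; the $\pi,\pi_\Gamma$ terms and $g,g_\Gamma$ are handled by Young's inequality and Gronwall, using \eqref{g}, \eqref{u0} to bound the initial energy (here the finiteness of $\int_\Omega\widehat\beta(u_0)$, $\int_\Gamma\widehat\beta_\Gamma(u_{0|\Gamma})$ and $u_{0|\Gamma}\in V_\Gamma$ are essential, together with the standard fact $\widehat\beta_\lambda(r)\le\widehat\beta(r)$). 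This yields $u_\lambda$ bounded in $L^\infty(0,T;V)\cap H^1(0,T;H)$, $v_\lambda$ in $L^\infty(0,T;V_\Gamma)\cap H^1(0,T;H_\Gamma)$, and $\mu_\lambda$ bounded in $L^2(0,T;V)$ once the mean is estimated. From the linear-growth bound \eqref{alpha_sub} we get $\eta_\lambda,\eta_{\Gamma,\lambda}$ bounded in $L^2(0,T;H)$, $L^2(0,T;H_\Gamma)$. A second estimate, testing \eqref{2} by $-\Delta u_\lambda$ (i.e.~elliptic regularity for $-\Delta u_\lambda + \beta_\lambda(u_\lambda) = \mu_\lambda - \eta_\lambda - \xi$-free rearrangement) combined with the boundary equation \eqref{3} read as an elliptic problem for $v_\lambda$ with datum in $L^2(0,T;H_\Gamma)$, gives $u_\lambda$ bounded in $L^2(0,T;W)$, $v_\lambda$ in $L^2(0,T;W_\Gamma)$, $\xi_\lambda$ in $L^2(0,T;H)$, $\xi_{\Gamma,\lambda}$ in $L^2(0,T;H_\Gamma)$, and $\mu_\lambda$ in $L^2(0,T;W_{\bf n})$; the crucial monotonicity trick here — testing against $\beta_\lambda(u_\lambda)$ and using \eqref{dom1} to absorb $\int_\Gamma\beta_\lambda^0$ against $\int_\Gamma\beta_{\Gamma,\lambda}^0$ plus the assumption \eqref{u0_mean} on the mean to keep $\norm{\xi_{\Gamma,\lambda}}_{L^1(\Sigma)}$ controlled — is the standard but delicate point.

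Finally, passage to the limit $\lambda\to0$: by the uniform bounds and Banach--Alaoglu I extract weakly/weakly-$*$ convergent subsequences $u_\lambda\to u$, $v_\lambda\to v$, $\mu_\lambda\to\mu$, $\eta_\lambda\to\eta$, $\xi_\lambda\to\xi$, $\eta_{\Gamma,\lambda}\to\eta_\Gamma$, $\xi_{\Gamma,\lambda}\to\xi_\Gamma$ in the respective spaces, and by Aubin--Lions (e.g. $V\cembed H$, $V_\Gamma\cembed H_\Gamma$) strong convergence $u_\lambda\to u$ in $C([0,T];H)$ and in $L^2(0,T;V)$, similarly for $v_\lambda$; the traces match so $v = u_{|\Gamma}$ a.e.~on $\Sigma$, and $u(0)=u_0$ passes to the limit. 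The linear equations \eqref{1}, \eqref{2}, \eqref{3} pass to the limit by weak convergence (noting $\pi(u_\lambda)\to\pi(u)$ strongly by Lipschitz continuity). To identify the nonlinear limits $\eta\in\alpha(\partial_t u)$, $\xi\in\beta(u)$, etc., I use the standard maximal-monotonicity argument: since $\alpha,\beta,\alpha_\Gamma,\beta_\Gamma$ are maximal monotone and we have weak convergence of arguments and images, it suffices to show $\limsup_\lambda\int\int \eta_\lambda\,\partial_t u_\lambda \le \int\int\eta\,\partial_t u$ (and analogues), which follows by combining the limiting equations with the energy identity — exactly the "lim sup" inequality obtained by multiplying the limit equations by $\partial_t u$, $u$ and comparing with the $\lambda$-level identity. \textbf{The main obstacle} I expect is precisely this last identification step for $\eta\in\alpha(\partial_t u)$: because the equation is doubly nonlinear in $\partial_t u$, getting strong enough convergence of $\partial_t u_\lambda$ (or a valid $\limsup$ inequality for the viscosity term) requires carefully exploiting the structure — testing \eqref{1} by $\mu_\lambda$ to recover $\int\int\mu_\lambda\partial_t u_\lambda$, writing $\mu_\lambda = \eta_\lambda - \Delta u_\lambda + \xi_\lambda + \pi(u_\lambda) - g$, and passing each piece to the limit using the already-established strong convergences of $u_\lambda$ and $\nabla u_\lambda$ — rather than any naive compactness on $\partial_t u_\lambda$, which is not available. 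The boundary term $\eta_{\Gamma,\lambda}\in\alpha_\Gamma(\partial_t v_\lambda)$ is handled identically using the coercivity \eqref{coerc2} and the $L^2(0,T;W_\Gamma)$ bound on $v_\lambda$.
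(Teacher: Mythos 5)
Your proposal follows essentially the same route as the paper: Yosida regularization of the graphs, reformulation as a doubly nonlinear evolution equation on $\H$ solved by the abstract theory of \cite{colli-visin}, the same a priori estimates (the energy estimate exploiting \eqref{coerc1}--\eqref{coerc2}, the bound on the mean of $\mu_\lambda$ via \eqref{u0_mean}, and the test by $\beta_\lambda(u_\lambda)$ absorbing the boundary term through \eqref{dom1}), and the final identification of $\eta,\eta_\Gamma$ through the joint $\limsup$/maximal-monotonicity argument, which you rightly single out as the delicate point. The only divergences are technical choices at the approximate level that you leave implicit and the paper makes precise: extra $\lambda$-terms ($\lambda\mu_\lambda$, $\lambda\partial_t u_\lambda$, $\lambda\partial_t v_\lambda$) so that the first equation inverts through $G_\lambda^{-1}=(\lambda I-\Delta_{\bf n})^{-1}$ on all of $H$ (rather than $\mathcal N$ on mean-zero data) and the abstract operator is coercive on $\H$, together with a Schauder fixed point with truncated $\pi,\pi_\Gamma$, since the non-monotone perturbations are not covered by the abstract theorem alone.
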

\begin{rmk}
Note that the setting of Theorem~\ref{thm1} includes the classical linear viscosity case, where
$\alpha=a I$ and $\alpha_\Gamma=bI$, for $a,b>0$, and allows
for any choice of the potentials acting on $u$ and $v$, provided that 
the compatibility condition \eqref{dom1} holds.
In particular, we are allowed to consider in the choice of $\widehat\beta+\widehat\pi$ 
and $\widehat\beta_\Gamma+\widehat\pi_\Gamma$ also
logarithmic-type potentials, which are the most 
relevant in terms on thermodynamical consistency of the model, i.e.
\[
  r\mapsto \left((1+r)\ln(1+r)+(1-r)\ln(1-r)\right) - cr^2\,, \qquad r\in(-1,1)\,, \qquad c>0\,.
\]
\end{rmk}

In the next existence result, we show how to remove the coercivity hypothesis \eqref{coerc1} on $\alpha$
by requiring stronger assumptions on the data.
Again, no further restrictions
are assumed on $\beta$ and $\beta_\Gamma$.
Moreover, we stress also that if $\alpha$ is coercive, then the further hypotheses
on the data ensure additional regularities on the solutions. 
\begin{thm}\label{thm2}
  Assume conditions \eqref{u0_mean}--\eqref{alpha_sub} and
  \begin{gather}
  \label{g'}
  g \in L^2(0,T; H)\cap H^1(0,T; V^*)\,, \qquad
  g_\Gamma \in L^2(0,T; H_{\Gamma}) \cap H^1(0,T; V_{\Gamma}^*)\,,\\
  \label{g'_bis}
  g(0) \in H\,, \qquad g_\Gamma(0) \in H_\Gamma\,,\\
  \label{u0'}
  u_0 \in W\,, \qquad u_{0|\Gamma} \in W_\Gamma\,,\\
  \label{u0'_bis}
  \exists\,y_0\in H:\;y_0\in\beta(u_0)\quad\text{a.e.~in } \Omega\,, \qquad
  \exists\,y_{0\Gamma}\in H_\Gamma:\;y_{0\Gamma}\in\beta_\Gamma(u_{0|\Gamma})\quad\text{a.e.~in } \Gamma\,,\\
  \label{u0'_ter}
    \exists\,\delta_0>0:\quad 
    \{-\Delta u_0+\beta_\delta(u_0)+(I-\delta\Delta_{\bf n})^{-1}g(0)\}_{\delta\in(0,\delta_0)} \quad\text{is bounded in } V\,.
  \end{gather}
  Then there exists a septuple $(u,v,\mu,\eta,\xi,\eta_\Gamma,\xi_\Gamma)$ such that
  \begin{gather}
  \label{u'}
  u \in W^{1,\infty}(0,T; V^*)\cap H^1(0,T; V)\cap L^\infty(0,T; W)\,,\\
  \label{v'}
  v \in W^{1,\infty}(0,T; H_\Gamma)\cap H^1(0,T; V_\Gamma)\cap L^\infty(0,T; W_\Gamma)\,,\\
  \label{mu'}
  \mu \in L^\infty(0,T; V)\cap L^2(0,T; W_{\bf n}\cap H^3(\Omega))\,,\\
  \label{xi_eta'}
  \eta\,,\xi \in L^\infty(0,T; H)\,, \qquad \eta_\Gamma\,,\xi_\Gamma \in L^\infty(0,T; H_\Gamma)
  \end{gather}
  and satisfying conditions \eqref{cond}--\eqref{3}. Moreover, if \eqref{coerc1} holds, then the same conclusion
  is true without the assumption \eqref{u0'_ter}, and additionally $u \in W^{1,\infty}(0,T; H)$
  and $\mu \in L^\infty(0,T; W_{\bf n})$.
\end{thm}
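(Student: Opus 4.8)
The plan is to prove Theorem~\ref{thm2} by the same approximation scheme used for Theorem~\ref{thm1}, but carrying out a more refined set of a~priori estimates that exploit the stronger data assumptions \eqref{g'}--\eqref{u0'_ter}. First I would introduce the Yosida-type regularization of the graphs, replacing $\beta$ and $\beta_\Gamma$ by their Yosida approximations $\beta_\delta,\beta_{\Gamma,\delta}$ (with the same $\delta$, as suggested by the appearance of $\beta_\delta$ in \eqref{u0'_ter}), keeping $\alpha,\alpha_\Gamma$ as genuine maximal monotone graphs with at most linear growth by \eqref{alpha_sub}. The approximating system is then solvable via the abstract doubly nonlinear theory of Section~\ref{approx}; call its solution $(u_\delta,v_\delta,\mu_\delta,\ldots)$. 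The novelty is that one does \emph{not} use \eqref{coerc1}: instead of testing \eqref{eq2} with $\partial_t u$ (which needs coercivity of $\alpha$ to control $\|\partial_t u\|_H$), one differentiates the equations in time and tests at the level of $V^*$, i.e. applies $\mathcal N$ to \eqref{eq1} and pairs $\partial_t(\cdot)$ with $\partial_t u_\delta$. This produces a bound on $\partial_t u_\delta$ in $V^*$ and on $\partial_t v_\delta$ in $H_\Gamma$ (here the boundary coercivity \eqref{coerc2} of $\alpha_\Gamma$ is essential — it is the one coercivity that is always assumed), explaining the weaker regularity \eqref{u'}--\eqref{v'} compared with Theorem~\ref{thm1}.

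The key steps, in order, are: (i) the \emph{basic energy estimate}, obtained as in Theorem~\ref{thm1} by testing \eqref{eq1} with $\mu_\delta$, \eqref{eq2} with $\partial_t u_\delta$ and \eqref{eq3} with $\partial_t v_\delta$, using $\widehat\beta_\delta,\widehat\beta_{\Gamma,\delta}\geq 0$ and the convexity inequalities, yielding $u_\delta$ bounded in $L^\infty(0,T;V)\cap H^1(0,T;H)$, $v_\delta$ in $L^\infty(0,T;V_\Gamma)\cap H^1(0,T;H_\Gamma)$, $\nabla\mu_\delta$ in $L^2(0,T;H)$, plus $\widehat\beta_\delta(u_\delta),\widehat\beta_{\Gamma,\delta}(v_\delta)$ bounded in $L^\infty(0,T;L^1)$; (ii) a \emph{mean-value control of $\mu_\delta$}: the mean $(\mu_\delta)_\Omega$ is controlled by testing \eqref{eq2} with the constant $1$ and invoking \eqref{u0_mean} together with the standard trick (as in \cite{col-gil-spr}) that $\{(u_0)_\Omega\}$ sitting in the interior of $D(\beta_\Gamma)$ gives an $L^1$-bound on $\xi_\delta$ and hence on $(\mu_\delta)_\Omega$ — combined with (i) this gives $\mu_\delta$ bounded in $L^2(0,T;V)$; (iii) the \emph{time-differentiated estimate}: formally differentiate \eqref{eq1}--\eqref{eq3} in $t$, apply $\mathcal N$ to the first equation, test with $\partial_t u_\delta$ respectively $\partial_t v_\delta$, using \eqref{g'} for the source terms, \eqref{g'_bis}--\eqref{u0'_bis}--\eqref{u0'_ter} to control the initial values of $\partial_t u_\delta$ in $V^*$ (this is exactly what \eqref{u0'_ter} is designed for: it bounds $-\Delta u_0 + \beta_\delta(u_0) + (I-\delta\Delta_{\bf n})^{-1}g(0)$, i.e. $\mu_\delta(0)$, in $V$), monotonicity of $\beta_\delta,\beta_{\Gamma,\delta}$ to drop the good terms, Lipschitz continuity of $\pi,\pi_\Gamma$, and the coercivity \eqref{coerc2} of $\alpha_\Gamma$ to extract $\partial_t v_\delta$ bounded in $L^2(0,T;H_\Gamma)$; this yields $\partial_t u_\delta$ bounded in $L^\infty(0,T;V^*)$ and $\partial_t v_\delta$ in $L^\infty(0,T;H_\Gamma)$; (iv) \emph{elliptic regularity}: rewriting \eqref{eq2}--\eqref{eq3} as an elliptic boundary value problem for $(u_\delta,v_\delta)$ with right-hand side $\mu_\delta - \eta_\delta - \xi_\delta - \pi(u_\delta) + g$ in the bulk and $g_\Gamma - \eta_{\Gamma,\delta} - \xi_{\Gamma,\delta} - \pi_\Gamma(v_\delta)$ on $\Gamma$, and using \eqref{dom1} to bootstrap $\xi_\delta$ from $\xi_{\Gamma,\delta}$, gives $u_\delta\in L^\infty(0,T;W)$, $v_\delta\in L^\infty(0,T;W_\Gamma)$, and then $\eta_\delta,\xi_\delta\in L^\infty(0,T;H)$ etc.; a further bootstrap on \eqref{eq1} gives $\mu_\delta\in L^2(0,T;H^3(\Omega))$.

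Then I would pass to the limit $\delta\to 0$: from the uniform bounds extract weakly/weakly-* convergent subsequences; the strong convergence needed to identify the nonlinear terms comes from Aubin--Lions (e.g. $H^1(0,T;V)\cap L^\infty(0,T;W)\embed C([0,T];V)$ compactly for $u$, and similarly for $v$, $\mu$), so that $\pi(u_\delta)\to\pi(u)$, $\pi_\Gamma(v_\delta)\to\pi_\Gamma(v)$ strongly, while the maximal monotone inclusions $\eta\in\alpha(\partial_t u)$, $\xi\in\beta(u)$, $\eta_\Gamma\in\alpha_\Gamma(\partial_t v)$, $\xi_\Gamma\in\beta_\Gamma(v)$ are recovered by the standard weak-strong closedness (lim sup) argument for maximal monotone operators, using that $\beta_\delta(u_\delta)\in\beta((I+\delta\beta)^{-1}u_\delta)$ and $(I+\delta\beta)^{-1}u_\delta\to u$. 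Finally, for the last sentence of the theorem: if in addition \eqref{coerc1} holds, one may run the basic estimate of Theorem~\ref{thm1} (testing \eqref{eq2} with $\partial_t u_\delta$ and using coercivity of $\alpha$) to get $\partial_t u_\delta$ bounded in $L^2(0,T;H)$ directly, and then the time-differentiated estimate no longer needs \eqref{u0'_ter} to start — the initial datum $\mu_\delta(0)$ is controlled through $\|\partial_t u_\delta(0)\|_H$, which is bounded by \eqref{u0'}--\eqref{u0'_bis} via the equation — upgrading to $u\in W^{1,\infty}(0,T;H)$ and (by elliptic regularity on \eqref{eq2} with an $L^\infty(0,T;H)$ right-hand side, the Neumann Laplacian being an isomorphism $W_{\bf n}\to H$) $\mu\in L^\infty(0,T;W_{\bf n})$. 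The main obstacle I expect is step (iii): making the formal time-differentiation rigorous at the approximate level (one genuinely works with difference quotients or with the additional regularity of the $\delta$-problem), and carefully tracking that every term generated by $\partial_t\eta_\delta$ and $\partial_t\xi_\delta$ either has a sign (by monotonicity) or is absorbed — in particular handling $\partial_t\eta_\delta$, where $\alpha$ is only a graph of linear growth and not differentiated, requires pairing $\ip{\partial_t\eta_\delta}{\partial_t u_\delta}$ and using monotonicity of $\alpha$ rather than any chain rule, and ensuring the initial-time contribution $\ip{\mu_\delta(0)}{\partial_t u_\delta(0)}$ is controlled precisely by \eqref{u0'_ter}.
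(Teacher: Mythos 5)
Your overall architecture is the paper's: regularize, redo the basic energy estimate without \eqref{coerc1} (integrating the source terms by parts in time via \eqref{g'}), control $(\mu)_\Omega$ through \eqref{u0_mean}, perform a time-differentiated estimate whose initial layer is controlled by \eqref{u0'}--\eqref{u0'_ter} (this is indeed what \eqref{u0'_ter} is for), bootstrap by elliptic regularity using \eqref{dom1}, and pass to the limit by monotonicity. However, there is a genuine gap precisely in your step (iii), which is the heart of the proof. You propose to handle the differentiated viscosity term by ``pairing $\ip{\partial_t\eta_\delta}{\partial_t u_\delta}$ and using monotonicity of $\alpha$'': plain monotonicity of $\alpha$ gives a sign for the pairing of $\partial_t\eta_\delta$ with $\partial_{tt}u_\delta$, not with $\partial_t u_\delta$, so this term cannot simply be dropped on that basis. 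The correct device, used in the paper through a difference-quotient argument at the regularized level, is the convex-conjugate inequality
\begin{equation*}
\bigl(\alpha_\lambda(\partial_t u_\lambda(t_n^i))-\alpha_\lambda(\partial_t u_\lambda(t_n^{i-1}))\bigr)\,\partial_t u_\lambda(t_n^i)\;\geq\;
\widehat{\alpha_\lambda^{-1}}\bigl(\alpha_\lambda(\partial_t u_\lambda(t_n^i))\bigr)-\widehat{\alpha_\lambda^{-1}}\bigl(\alpha_\lambda(\partial_t u_\lambda(t_n^{i-1}))\bigr)\,,
\end{equation*}
valid because $\partial_t u_\lambda\in\partial\widehat{\alpha_\lambda^{-1}}(\alpha_\lambda(\partial_t u_\lambda))$, and likewise on the boundary. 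This is not a cosmetic point: the resulting $L^\infty(0,T;L^1)$ bounds on $\widehat{\alpha_\lambda^{-1}}(\alpha_\lambda(\partial_t u_\lambda))$ and $\widehat{\alpha_{\Gamma\lambda}^{-1}}(\alpha_{\Gamma\lambda}(\partial_t v_\lambda))$, combined with the linear growth \eqref{alpha_sub} (which makes the conjugates superquadratic), are the \emph{only} source of $\eta\in L^\infty(0,T;H)$ and $\eta_\Gamma\in L^\infty(0,T;H_\Gamma)$; from these, \eqref{coerc2} gives $\partial_t v\in L^\infty(0,T;H_\Gamma)$, the $\pm1$ test of the second equation gives $(\mu)_\Omega\in L^\infty(0,T)$ and hence $\mu\in L^\infty(0,T;V)$, and only then does the elliptic test by $-\Delta u$ deliver $u\in L^\infty(0,T;W)$, $\xi\in L^\infty(0,T;H)$, $v\in L^\infty(0,T;W_\Gamma)$, $\xi_\Gamma\in L^\infty(0,T;H_\Gamma)$. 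In your plan these $L^\infty$-in-time conclusions (most of \eqref{u'}--\eqref{xi_eta'}) are asserted in (iii)--(iv) but never actually produced: your step (ii) only yields $\mu\in L^2(0,T;V)$, your (iii) as described yields no bound on $\eta,\eta_\Gamma$ beyond what linear growth gives in $L^2$, and the elliptic bootstrap in (iv) cannot start with merely $L^2$-in-time right-hand sides. Note also that \eqref{coerc2} does not ``extract $\partial_t v_\delta$ in $L^2(0,T;H_\Gamma)$'' in the differentiated estimate (that comes from the first estimate); in the differentiated one it acts only after the conjugate bound on $\eta_\Gamma$ is available.

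Two further, smaller deviations would need repair. Keeping $\alpha,\alpha_\Gamma$ as genuine graphs in the approximation is not compatible with the abstract theory you invoke: the solvability and, crucially, the extra regularity of the approximate problems (needed to justify the difference-quotient/time-differentiation argument, cf.\ Lemma~\ref{prop_app2}) rely on $A_\lambda$ being Lipschitz, bounded and coercive on $\H$, which is obtained exactly by Yosida-regularizing $\alpha,\alpha_\Gamma$ and adding the $\lambda$-viscosity and $G_\lambda^{-1}$ terms; without this your ``$\delta$-problem'' need not have the regularity your step (iii) requires. Finally, the initial layer must control not only $\nabla\mu_\lambda(0)$ in $H$ but also the conjugate integrals $\int_\Omega\widehat{\alpha_\lambda^{-1}}(\alpha_\lambda(u'_{0\lambda}))$ and $\int_\Gamma\widehat{\alpha_{\Gamma\lambda}^{-1}}(\alpha_{\Gamma\lambda}(v'_{0\lambda}))$ (as in Lemma~\ref{init_reg}), since these are the quantities propagated in time; this is absent from your sketch because the conjugate functions never enter it.
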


\begin{rmk}
  Note that hypothesis \eqref{u0'_ter} clearly holds if $u_0\in H^3(\Omega)$, $g(0)\in V$ and
  the family $\{\beta_\delta(u_0)\}_{\delta \in(0,\delta_0)}$ is bounded in $V$. This condition is not new in literature:
  see for example the work \cite[pp.~977--978]{col-gil-spr} for sufficient conditions.
\end{rmk}
\begin{rmk}
  The setting of Theorem~\ref{thm2} allows to include in our analysis also 
  the cases where $\alpha=\operatorname{sign}$ for example, or
  $\alpha(r)=r_+$, $r\in\erre$. Again, no further assumption are made on $\beta$
  or $\beta_\Gamma$, so that logarithmic-type potentials are included.
\end{rmk}

The third existence result that we present allows to remove the linear growth condition on
$\alpha$ and $\alpha_\Gamma$, but requires in turn a polynomial control 
of the growth of $\beta$ and $\beta_\Gamma$.
Here, 
the inclusions with respect to the operators $\alpha$ and $\alpha_\Gamma$
are satisfied in a weak sense.
To this end, we shall introduce the operators 
$\alpha_w:V\to2^{V^*}$ and $\alpha_{\Gamma w}:V_\Gamma\to 2^{V_\Gamma^*}$
as
\begin{align*}
  \alpha_w(x)&:=\left\{y\in V^*:\;\int_\Omega\widehat\alpha(x) + \ip{y}{\varphi-x}_V\leq\int_\Omega\widehat{\alpha}(\varphi)
  \quad\forall\,\varphi\in V\right\}\,, \qquad x\in V\,,\\
  \alpha_{\Gamma w}(x_\Gamma)&:=\left\{y_\Gamma\in V_\Gamma^*:
  \;\int_\Gamma\widehat\alpha_\Gamma(x_\Gamma) + \ip{y_\Gamma}{\psi-x_\Gamma}_{V_\Gamma}\leq\int_\Gamma\widehat\alpha_\Gamma(\psi)
  \quad\forall\,\psi\in V_\Gamma\right\}\,, \qquad x_\Gamma\in V_\Gamma\,,
\end{align*}
which are clearly the subdifferentials of the proper, convex and l.s.c.~functions induced by $\widehat\alpha$
and $\widehat\alpha_\Gamma$ on $V$ and $V_\Gamma$, respectively. Similarly,
we shall introduce the (maximal monotone) operator $\widetilde\alpha_w:\V\to2^{\V^*}$ as
\[\begin{split}
  \widetilde\alpha_w(x,x_\Gamma):=&\left\{y\in \V^*:\;\int_\Omega\widehat\alpha(x)
  +\int_\Gamma\widehat\alpha_\Gamma(x_\Gamma)
   + \ip{y}{(\varphi,\psi)-(x,x_\Gamma)}_\V\right.\\
   &\qquad\qquad\quad\left.\leq\int_\Omega\widehat{\alpha}(\varphi)
   +\int_\Gamma\widehat\alpha_\Gamma(\psi)
  \quad\forall\,(\varphi,\psi)\in \V\right\}\,, \qquad (x,x_\Gamma)\in \V\,.
\end{split}\]
Note that $\alpha_w(x)+\alpha_{\Gamma w}(x_\Gamma)\subseteq\widetilde\alpha_w(x,x_\Gamma)$ for every $(x,x_\Gamma)\in \V$,
but equality may not hold in general as $\V^*$ is strictly larger than $(V\times V_\Gamma)^*$.
This will result in a weaker variational formulation for both the evolution equation itself
and for the inclusions with respect to the nonlinear operators acting in the viscosity terms.

\begin{thm}\label{thm3}
  Assume conditions \eqref{u0_mean} and \eqref{g'}--\eqref{u0'_ter}. If
  \beq\label{ip_0dom}
    0 \in \operatorname{Int}\left(D(\alpha)\cap D(\alpha_\Gamma)\right)
  \eeq
  and
  \begin{align}
    \label{ip_beta}
    \exists\,c_1,c_2>0&: \quad |r|\leq c_1|s|^5 + c_2 \quad\forall\,s\in D(\beta)\,,\quad\forall\,r\in\beta(s)\\
    \label{ip_beta_g}
    \exists\,p\geq 5,\;d_1,d_2>0&:\quad |r| \leq d_1 |s|^{p} + d_2 \quad\forall\,s\in D(\beta_\Gamma)\,, \quad
    \forall\,r\in \beta_\Gamma(s)\,,
  \end{align}
  then there exists a septuple $(u,v,\mu,\eta,\xi,\eta_\Gamma,\xi_\Gamma)$ such that
  \begin{gather}
    \label{uv_w}
      u \in W^{1,\infty}(0,T; V^*)\cap H^1(0,T; V)\,,\qquad
      v \in W^{1,\infty}(0,T; H_\Gamma)\cap H^1(0,T; V_\Gamma)\,,\\
    \label{mu_w}
     \mu \in L^\infty(0,T; V)\cap L^2(0,T; W_{\bf n}\cap H^3(\Omega))\,,\\
     \label{eta_w}
     \eta_w \in L^\infty(0,T; \V^*)\,, \qquad
     \eta_w\in\widetilde\alpha_w(\partial_t u, \partial_t v) \quad\text{a.e.~in } (0,T)\,,\\
   \label{xi_w}
     \xi \in L^\infty(0,T; L^{6/5}(\Omega))\,, \qquad
     \xi_\Gamma \in L^\infty(0,T; L^q(\Gamma))\quad\forall\,q\in[1,+\infty)\,,\\
    \label{incl_xi_w}
    \xi\in\beta(u) \quad\text{a.e.~in } Q\,,
     \qquad \xi_\Gamma \in \beta_\Gamma(v) \quad\text{a.e.~in } \Sigma\,,
  \end{gather}
  satisfying conditions \eqref{cond}, \eqref{1} and
  \beq
    \label{eq_var}
    \begin{split}
    \int_\Omega\mu(t)\varphi=\ip{\eta_w(t)}{(\varphi,\psi)}_\V &+ \int_\Omega\nabla u(t)\cdot\nabla\varphi + 
    \int_\Omega\left(\xi(t)+\pi(u(t))-g(t)\right)\varphi\\
    &+ \eps\int_\Gamma\nabla_\Gamma v(t)\cdot\nabla_\Gamma\psi
    +\int_\Gamma(\xi_\Gamma(t)+\pi_\Gamma(v(t))-g_\Gamma(t))\psi
    \end{split}
  \eeq
  for every $(\varphi,\psi)\in \V$ and a.e.~$t\in(0,T)$.
  Moreover, if
  \beq
    \label{ip_beta'}
    \exists\,c_1,c_2>0: \quad |r|\leq c_1|s|^3 + c_2 \quad\forall\,s\in D(\beta)\,,\quad\forall\,r\in\beta(s)\,,
  \eeq
  then $\xi \in L^\infty(0,T; H)$. Furthermore, if \eqref{coerc1} holds, then the same conclusions
  are true also without the assumption \eqref{u0'_ter}, and additionally $u \in W^{1,\infty}(0,T; H)$
  and $\mu \in L^\infty(0,T; W_{\bf n})$.
\end{thm}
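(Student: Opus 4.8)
The plan is to reuse the approximation scheme of Section~\ref{approx} and the a priori estimates already developed for Theorem~\ref{thm2}, but to replace the strong pointwise inclusions with respect to $\alpha,\alpha_\Gamma$ by the weak variational inclusion carried by $\widetilde\alpha_w$. First I would regularize the problem, e.g.\ by Yosida approximation $\beta_\delta,\beta_{\Gamma\delta}$ of the potentials together with a mollification (or a parallel truncation) of $\alpha,\alpha_\Gamma$ so that the abstract doubly nonlinear theory of Section~\ref{approx} applies and produces, for each $\delta\in(0,\delta_0)$, an approximate septuple. Under \eqref{ip_0dom} the graphs $\alpha,\alpha_\Gamma$ need not have linear growth, so the viscosity terms $\eta,\eta_\Gamma$ can only be bounded in the dual spaces $V^*,V_\Gamma^*$ (and jointly in $\V^*$), not in $H,H_\Gamma$; this is exactly why the formulation \eqref{eq_var} is phrased through $\widetilde\alpha_w$ and tested only against pairs $(\varphi,\psi)\in\V$. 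The first batch of estimates is the standard energy identity: test \eqref{eq1} by $\mathcal{N}\partial_t(u-u_\Omega)$ (using mass conservation, which holds because $\partial_{\bf n}\mu=0$ and $(u_0)_\Omega\in\operatorname{Int}D(\beta_\Gamma)$ by \eqref{u0_mean}), test \eqref{eq2} by $\partial_t u$ and \eqref{eq3} by $\partial_t v$, add, and exploit coercivity \eqref{coerc2} of $\alpha_\Gamma$ and the control \eqref{dom1} of $\beta$ by $\beta_\Gamma$ to absorb the boundary potential term. This yields $u\in H^1(0,T;V)$, $v\in H^1(0,T;V_\Gamma)$, $\mu\in L^2(0,T;V)$ and, crucially, $\widehat\alpha(\partial_t u)\in L^\infty(0,T;L^1(\Omega))$, $\widehat\alpha_\Gamma(\partial_t v)\in L^\infty(0,T;L^1(\Gamma))$, together with boundedness of $\xi\cdot\partial_t u$ and $\xi_\Gamma\cdot\partial_t v$ in suitable $L^1$.

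The second batch is the ``differentiated'' estimate inherited from Theorem~\ref{thm2}: using \eqref{g'}, \eqref{g'_bis}, \eqref{u0'}--\eqref{u0'_ter} one formally differentiates the system in time (rigorously, one tests the time-incremental quotients), tests by $\partial_t u$, $\partial_t v$ and $\mathcal{N}\partial_{tt}u$, and obtains $u\in W^{1,\infty}(0,T;V^*)$, $v\in W^{1,\infty}(0,T;H_\Gamma)$, $\mu\in L^\infty(0,T;V)$. Combining this with elliptic regularity for $-\Delta$ with the boundary operator of \eqref{eq3} and for $-\Delta_{\bf n}$ gives $\mu\in L^2(0,T;W_{\bf n}\cap H^3(\Omega))$ after one more bootstrap from \eqref{eq1}. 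The polynomial control \eqref{ip_beta} together with the continuous embedding $V\embed L^6(\Omega)$ (recall $N\le 3$) gives $\xi\in L^\infty(0,T;L^{6/5}(\Omega))$ via the pointwise bound $|\xi_\delta|\le c_1|u_\delta|^5+c_2$ and uniform $L^\infty(0,T;V)$ bounds on $u_\delta$ that one extracts from the second batch; analogously \eqref{ip_beta_g} with $V_\Gamma\embed L^q(\Gamma)$ for every finite $q$ (since $\dim\Gamma\le 2$) yields $\xi_\Gamma\in L^\infty(0,T;L^q(\Gamma))$ for all $q<\infty$. Under the stronger cubic bound \eqref{ip_beta'} the same argument upgrades $\xi$ to $L^\infty(0,T;H)$.

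With these uniform bounds in hand, I would pass to the limit $\delta\to0$ along a subsequence: weak-$*$ compactness gives the limits $u,v,\mu,\eta_w,\xi,\xi_\Gamma$ in the spaces of \eqref{uv_w}--\eqref{xi_w}; the Aubin--Lions--Simon lemma gives strong convergence of $u_\delta\to u$ in, say, $C^0([0,T];H)\cap L^2(0,T;V)$ and of $v_\delta\to v$ in $C^0([0,T];H_\Gamma)\cap L^2(0,T;V_\Gamma)$, which (with the polynomial growth and uniform higher integrability, via a Vitali/de~la~Vall\'ee-Poussin argument) lets one identify $\xi\in\beta(u)$ and $\xi_\Gamma\in\beta_\Gamma(v)$ by maximal monotonicity. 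Equations \eqref{cond} and \eqref{1} pass to the limit directly. The genuinely delicate point — and the one I expect to be the main obstacle — is the identification $\eta_w\in\widetilde\alpha_w(\partial_t u,\partial_t v)$: since $\eta_{w,\delta}$ converges only weakly-$*$ in $L^\infty(0,T;\V^*)$ and $(\partial_t u_\delta,\partial_t v_\delta)$ only weakly in $L^2(0,T;\V)$, one cannot pass to the limit pointwise in the convexity inequality defining $\widetilde\alpha_w$. The standard remedy is a limsup argument: integrate the approximate inequality in time, and show
\[
  \limsup_{\delta\to0}\int_0^T\ip{\eta_{w,\delta}}{(\partial_t u_\delta,\partial_t v_\delta)}_\V\,dt
  \le \int_0^T\ip{\eta_w}{(\partial_t u,\partial_t v)}_\V\,dt\,,
\]
which one extracts from the weak formulation \eqref{eq_var} tested against $(\partial_t u_\delta,\partial_t v_\delta)$ itself, using the energy identity to rewrite $\int\ip{\eta_{w,\delta}}{(\partial_t u_\delta,\partial_t v_\delta)}$ in terms of quantities whose limit (or limsup) is controlled — namely $\int\mu_\delta\partial_t u_\delta$, the Dirichlet energies of $u_\delta,v_\delta$ evaluated at the endpoints (lower semicontinuity in the right direction because of the minus sign from integration by parts in time), and the $\xi_\delta\partial_t u_\delta$, $\xi_{\Gamma\delta}\partial_t v_\delta$ terms handled through the convex functions $\widehat\beta,\widehat\beta_\Gamma$. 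Once this limsup inequality is secured, the general theory of maximal monotone operators in the duality $(\V,\V^*)$ closes the identification, and \eqref{eq_var} follows by weak convergence of every remaining term. The addendum under \eqref{coerc1} removing \eqref{u0'_ter} and giving $u\in W^{1,\infty}(0,T;H)$, $\mu\in L^\infty(0,T;W_{\bf n})$ is obtained exactly as in the corresponding part of Theorem~\ref{thm2}, since coercivity of $\alpha$ lets one bound $\partial_t u_\delta$ in $L^\infty(0,T;H)$ directly from the differentiated estimate, bypassing \eqref{u0'_ter}.
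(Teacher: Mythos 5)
Your overall architecture coincides with the paper's: the same approximation of Section~\ref{approx}, the basic energy estimate plus the time-incremental (``differentiated'') estimate inherited from Theorem~\ref{thm2}, the growth conditions \eqref{ip_beta}--\eqref{ip_beta_g} combined with $V\embed L^6(\Omega)$ and $V_\Gamma\embed L^q(\Gamma)$ to bound $\xi,\xi_\Gamma$, strong compactness plus monotonicity to identify $\xi\in\beta(u)$, $\xi_\Gamma\in\beta_\Gamma(v)$, and a final limsup/maximal-monotonicity argument in the duality $(\V,\V^*)$ for $\eta_w$. However, there is a genuine gap: hypothesis \eqref{ip_0dom} is never actually used, and the step it exists for is missing. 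The differentiated estimate yields only $\nabla\mu_\lambda$ bounded in $L^\infty(0,T;H)$ together with the conjugate bounds $\widehat{\alpha_\lambda^{-1}}(\alpha_\lambda(\partial_t u_\lambda))$, $\widehat{\alpha_{\Gamma\lambda}^{-1}}(\alpha_{\Gamma\lambda}(\partial_t v_\lambda))$ in $L^\infty(0,T;L^1)$ (cf.~\eqref{est16bis}); it does not give the full bound \eqref{mu_w}, which you simply assert. To control the spatial mean $(\mu_\lambda)_\Omega$ one tests \eqref{eq2_app} by $\pm1$, and this requires $L^1(\Omega)$- and $L^1(\Gamma)$-bounds, uniform in time and in $\lambda$, on the viscosity terms $\alpha_\lambda(\partial_t u_\lambda)$, $\alpha_{\Gamma\lambda}(\partial_t v_\lambda)$. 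Since \eqref{alpha_sub} is unavailable and \eqref{coerc1} is not assumed here, the only source of such bounds is \eqref{ip_0dom} via the Fenchel--Young inequality $\pm\delta\,\alpha_\lambda(\partial_t u_\lambda)\le\widehat\alpha(\pm\delta)+\widehat{\alpha_\lambda^{-1}}(\alpha_\lambda(\partial_t u_\lambda))$ combined with \eqref{est16bis} (and the analogue on $\Gamma$). Without this step the chain breaks twice: you cannot upgrade the gradient bound to $\mu\in L^\infty(0,T;V)$, and you cannot run the comparison argument in the abstract equation $A_\lambda\partial_t(u_\lambda,v_\lambda)+B_\lambda(u_\lambda,v_\lambda)=(g_\lambda,g_{\Gamma\lambda})-(T_\lambda\pi(u_\lambda),T_\lambda\pi_\Gamma(v_\lambda))$ that produces the uniform $L^\infty(0,T;\V^*)$ bound on $(\alpha_\lambda(\partial_t u_\lambda),\alpha_{\Gamma\lambda}(\partial_t v_\lambda))$ behind \eqref{eta_w}, because that comparison needs $\mu_\lambda$ (equivalently $G_\lambda^{-1}\partial_t u_\lambda$) bounded uniformly at least in $L^\infty(0,T;V^*)$, i.e.\ again the mean control. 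In your write-up the dual bound on the viscosities is only announced (``can only be bounded in the dual spaces''), never derived.

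A secondary inaccuracy concerns the distribution of the estimates: you credit the first energy estimate with $u\in H^1(0,T;V)$ and $\widehat\alpha(\partial_t u)\in L^\infty(0,T;L^1(\Omega))$, whereas it only controls $\int_Q\alpha_\lambda(\partial_t u_\lambda)\partial_t u_\lambda$ (an $L^1(Q)$ quantity) and gives $u_\lambda\in L^\infty(0,T;V)\cap H^1(0,T;V^*)$; the $L^\infty$-in-time conjugate bound and the $L^2(0,T;V)$ bound on $\partial_t u_\lambda$ come from the time-incremental estimate, which in turn requires the uniform identification of the initial values $(u_{0\lambda}',v_{0\lambda}',\mu_{0\lambda})$ as in Lemma~\ref{init_reg}, using \eqref{g'_bis}--\eqref{u0'_ter} (or \eqref{coerc1}). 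This misattribution does not change the strategy, but as written the quantities your limit passage and limsup argument rely on are not all produced by the estimates you describe.
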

\begin{rmk}
  The setting of Theorem~\ref{thm3} allows $\alpha$ and $\alpha_\Gamma$ to be superlinear at infinity,
  but in turn requires polynomial growth for $\beta$ and $\beta_\Gamma$.
  In this setting, note that we can include the classical choice
  \[
  r\mapsto \frac14(r^2-1)^2\,, \qquad r\in\erre\,,
  \]
  for $\widehat\beta+\widehat\pi$, and any generic polynomial double-well potential for
  $\widehat\beta_\Gamma+\widehat\pi_\Gamma$. These may be seen, as usual, as
  suitable approximation of the more relevant logarithmic potentials.
\end{rmk}
\begin{rmk}
  Let us stress that the hypothesis \eqref{ip_0dom} is the direct generalization of \eqref{u0_mean}.
  Indeed, it is readily seen from \eqref{1} that $(u)_\Omega$ is constantly equal to $(u_0)_\Omega$,
  as well as $(\partial_t u)_\Omega=0$ at any time. Consequently, taking into account that 
  $\alpha$ and $\alpha_\Gamma$ are acting on the time derivatives of the solutions, 
  the hypotheses \eqref{u0_mean} and \eqref{ip_0dom} clearly possess the same structure.
\end{rmk}
\begin{rmk}
  Let us comment on \eqref{eq_var}, which is the natural variational formulation in the dual space $\V^*$
  of the couple of equations \eqref{eq2} and \eqref{eq3}.
  Note that since $N\in\{2,3\}$, we have the continuous inclusions $V\embed L^6(\Omega)$
  and $V_\Gamma\embed L^q(\Gamma)$ for every $q\in[1,+\infty)$. Hence, it is clear that
  $L^{6/5}(\Omega)\embed V^*$ and $L^{q'}(\Gamma)\embed V_\Gamma^*$ for every $q'\in(1,+\infty]$.
  For these reasons, we have in particular
  that $\xi \in L^\infty(0,T; V^*)$ and $\xi_\Gamma \in L^\infty(0,T;V_\Gamma^*)$, so that the dualities 
  \[
    \int_\Omega \xi\varphi \qquad\text{and}\qquad \int_\Gamma\xi_\Gamma\psi
  \]
  in the variational formulation \eqref{eq_var} make sense by the classical H\"older inequality, 
  and must be read as $\ip{\xi}{\varphi}_V$
  and $\ip{\xi_\Gamma}{\psi}_{V_\Gamma}$, respectively. 
\end{rmk}

We turn now to uniqueness of solutions.
According to different smoothness or growth assumptions on the potentials,
uniqueness in proved
both in the class of solutions given by Theorem~\ref{thm2} and
in the largest class of
Theorem~\ref{thm1}.
\begin{thm}
  \label{thm4}
  Assume that $\beta$ and $\beta_\Gamma$ are single-valued, and that 
  \begin{gather}
  \label{ip_uniq}
    F:=\widehat\beta + \int_0^\cdot\pi(s)\,ds \in C^{2,1}_{loc}(\erre)\,, \qquad
    F_\Gamma:=\widehat\beta_\Gamma + \int_0^\cdot\pi_\Gamma(s)\,ds \in C^{2,1}_{loc}(\erre)\,,\\
  \label{ip_uniq'}
    \exists\,\widetilde{b_1}>0:\quad (s_1-s_2)(r_1-r_2)\geq\widetilde{b_1}|r_1-r_2|^2 \quad\forall\,(r_i,s_i)\in\alpha_\Gamma\,, \;i=1,2\,.
  \end{gather}
  Then, there is a unique septuple $(u,v,\mu,\eta,\xi,\eta_\Gamma,\xi_\Gamma)$ 
  satisfying \eqref{u'}--\eqref{xi_eta'} and \eqref{cond}--\eqref{3}.
  Furthermore, if additionally
  \begin{align}
    \label{ip_uniq''}
    \exists\,M>0:&\quad F'''(r)\leq M\left(1+|r|^3\right) \quad\text{for a.e.~}r\in\erre\,,\\
    \label{ip_uniq'''}
    \exists\,M,q>0:&\quad F_\Gamma'''(r)\leq M\left(1+|r|^q\right) \quad\text{for a.e.~}r\in\erre\,,
  \end{align}
  there exists a unique septuple $(u,v,\mu,\eta,\xi, \eta_\Gamma, \xi_\Gamma)$ 
  satisfying \eqref{uv_w}--\eqref{eq_var}, \eqref{cond} and \eqref{1}.
\end{thm}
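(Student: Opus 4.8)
The plan is to prove uniqueness by the standard energy/contraction method, exploiting the smoothing provided by the inverse Neumann Laplacian $\mathcal N$ to handle the fourth-order structure of the bulk equation, and the coercivity assumption \eqref{ip_uniq'} on $\alpha_\Gamma$ together with \eqref{coerc1} (or the weak coercivity built into $\alpha$) to absorb the viscosity contributions. Suppose $(u_i,v_i,\mu_i,\eta_i,\xi_i,\eta_{\Gamma i},\xi_{\Gamma i})$, $i=1,2$, are two solutions with the same data, and set $u:=u_1-u_2$, $v:=v_1-v_2$, and analogously for the remaining quantities. Since $(u)_\Omega=0$ for all $t$ (by \eqref{1} and $u(0)=0$) and $(\partial_t u)_\Omega=0$, we may apply $\mathcal N$ to $\partial_t u$; testing \eqref{1} for the difference by $\mathcal N(\partial_t u)$ gives $\tfrac12\tfrac{d}{dt}\norm{\partial_t u}_{V^*}^2$ — wait, more precisely testing \eqref{1} by $\mathcal N u$ gives control of $\tfrac12\tfrac{d}{dt}\norm{u}_{V^*}^2 + \int_\Omega\nabla\mu\cdot\nabla\mathcal N u$, and using \eqref{2} for the difference one rewrites $\int_\Omega\mu\,\partial_t(\mathcal N u)$-type terms; the cleaner route is: test the difference of \eqref{1} by $\mathcal N(u_1-u_2)$ and the difference of \eqref{2} by $u_1-u_2$, then add.

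Concretely, I would first test the difference of \eqref{1} with $\mathcal N u$, obtaining
\[
  \tfrac12\tfrac{d}{dt}\norm{u(t)}_{V^*}^2 + \int_\Omega \mu\, u = 0
\]
(using $\int_\Omega\nabla\mu\cdot\nabla\mathcal N u = \ip{\partial_t(\mathcal N u)}{\,\cdot\,}$ and symmetry of $\mathcal N$, together with $\mu_\Omega$ being integrable against constants — careful bookkeeping of the mean of $\mu$ is needed but is routine). Then I substitute \eqref{2} for the difference into $\int_\Omega\mu u$: this produces $\int_\Omega(\eta_1-\eta_2)u + \norm{\nabla u}_H^2 + \int_\Omega(\xi_1-\xi_2)u + \int_\Omega(\pi(u_1)-\pi(u_2))u$ plus the boundary contribution $\int_\Gamma \partial_{\bf n}u\cdot u_{|\Gamma}$, which via \eqref{3} for the difference becomes $\int_\Gamma(\eta_{\Gamma 1}-\eta_{\Gamma 2})v + \eps\norm{\nabla_\Gamma v}_{H_\Gamma}^2 + \int_\Gamma(\xi_{\Gamma 1}-\xi_{\Gamma 2})v + \int_\Gamma(\pi_\Gamma(v_1)-\pi_\Gamma(v_2))v$. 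The monotone terms $\int(\xi_1-\xi_2)u\geq 0$ and $\int(\xi_{\Gamma1}-\xi_{\Gamma2})v\geq 0$ are discarded; the $\pi$-terms are controlled by $C_\pi\norm{u}_H^2$ and $C_{\pi_\Gamma}\norm{v}_{H_\Gamma}^2$; the viscosity terms $\int(\eta_1-\eta_2)u$ need to be turned into time derivatives.

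The key manipulation for the viscosity is to note that $\int_\Omega(\eta_1-\eta_2)u$ is \emph{not} directly a time derivative, so instead I would \emph{also} test the difference of \eqref{2} with $\partial_t u$ (and \eqref{3} with $\partial_t v$) and use the convexity identity $\int_\Omega(\eta_1-\eta_2)\partial_t u = \int_\Omega(\eta_1-\eta_2)(\partial_t u_1-\partial_t u_2) \geq 0$ by monotonicity of $\alpha$, together with $\int_\Omega(-\Delta u)\partial_t u = \tfrac12\tfrac{d}{dt}\norm{\nabla u}_H^2 + (\text{boundary})$ and $\int_\Omega(\xi_1-\xi_2)\partial_t u$, which is where \eqref{ip_uniq} enters: writing $\xi_i+\pi(u_i)=F'(u_i)$ with $F\in C^{2,1}_{loc}$, a Taylor expansion gives $\int_\Omega(F'(u_1)-F'(u_2))\partial_t u = \tfrac{d}{dt}\int_\Omega\big(F(u_1)-F(u_2)-F'(u_2)u\big) + \int_\Omega F''(u_2)\partial_t u_2\, u + \dots$, and the remainder is controlled by $\norm{\partial_t u_2}_{?}$ times low-order norms of $u$ provided one has enough integrability — and here the two-tier structure of the theorem appears: in the class \eqref{u'}--\eqref{xi_eta'} the regularity $u_i\in L^\infty(0,T;W)\embed L^\infty(Q)$, $v_i\in L^\infty(\Sigma)$ makes $F'',F'''$ bounded along the solutions with no growth restriction, so \eqref{ip_uniq} alone suffices; in the weaker class \eqref{uv_w}--\eqref{eq_var} one only has $u_i\in H^1(0,T;V)\embed L^\infty(0,T;L^6)$ and $v_i\in L^\infty(0,T;L^q)$, so one needs the explicit polynomial growth \eqref{ip_uniq''}--\eqref{ip_uniq'''} on $F'''$ (equivalently on $\beta',\pi'$ composed derivatives) to estimate $\int F''(u_2)(\partial_t u_1)u$ and similar cross terms by Hölder/Sobolev — e.g. $F'''$ growing like $|r|^3$ pairs with $L^6$ bounds in dimension $3$, which is exactly the borderline that forces the exponent $3$. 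For the boundary, the analogous Taylor expansion for $F_\Gamma$ uses $v_i\in L^\infty(0,T;L^q(\Gamma))$ for all finite $q$, so a polynomial bound $|r|^q$ of \emph{any} order is admissible, explaining why \eqref{ip_uniq'''} allows arbitrary $q$.

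Combining the two tested identities, adding a suitable multiple of the first (the $\mathcal N u$ one) to the second, one arrives at a differential inequality of the form
\[
  \tfrac{d}{dt}\Big(\norm{u}_{V^*}^2 + \norm{\nabla u}_H^2 + \eps\norm{v}_{?}^2 + \int_\Omega G + \int_\Gamma G_\Gamma\Big) + c\big(\norm{\partial_t u}_H^2 + \norm{\partial_t v}_{H_\Gamma}^2\big) \leq C\Phi(t)\big(\text{same bracket}\big),
\]
where $G,G_\Gamma$ are the (possibly sign-indefinite, but lower-order-controlled) Taylor remainders and $\Phi\in L^1(0,T)$ collects $1 + \norm{\partial_t u_i(t)}_{?}^2 + \dots$ which is integrable precisely by the regularity \eqref{u'}--\eqref{v'} or \eqref{uv_w}. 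Since everything vanishes at $t=0$, Gronwall's lemma forces $u\equiv 0$, $v\equiv 0$; then \eqref{2}--\eqref{3} give $\mu$, and $\eta,\xi,\eta_\Gamma,\xi_\Gamma$ are determined (using single-valuedness of $\beta,\beta_\Gamma$ for $\xi,\xi_\Gamma$, and for $\eta,\eta_\Gamma$ either the coercivity/strict-monotonicity making them single-valued along the solution, or simply reading them off from the equations once $u,v,\mu$ coincide). The main obstacle is the bookkeeping of the viscosity cross-terms $\int(\eta_1-\eta_2)\partial_t u$ versus $\int(\eta_1-\eta_2)u$: one cannot test with $\partial_t u$ "for free" because $\partial_t u\notin V$ in general in the weak class, so in the setting \eqref{uv_w}--\eqref{eq_var} the inclusion must be handled through $\widetilde\alpha_w$ and the variational inequality defining it, testing \eqref{eq_var} for the difference with $(\partial_t u,\partial_t v)\in\V$ and invoking monotonicity of $\widetilde\alpha_w$ directly — this is the delicate point and the reason the two-tier statement is stated the way it is.
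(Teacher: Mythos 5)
Your proposal is essentially the paper's proof: after the initial (unnecessary) detour through testing with $\mathcal N u$, you land on the actual argument — test the difference of \eqref{1} by $\mu_1-\mu_2$ and of \eqref{2} (resp.~\eqref{eq_var}) by $-\partial_t(u_1-u_2)$ (resp.~$-(\partial_t(u_1-u_2),\partial_t(v_1-v_2))\in\V$), drop the bulk viscosity term by monotonicity of $\alpha$, use \eqref{ip_uniq'} to keep $\widetilde{b_1}\int_{\Sigma_t}|\partial_t(v_1-v_2)|^2$, and treat $(F'(u_1)-F'(u_2))\partial_t(u_1-u_2)$ via the Efendiev--Zelik/Miranville--Schimperna identity, with the $F'''$-remainder controlled by the $L^\infty(Q)$, $L^\infty(\Sigma)$ bounds in the strong class and by \eqref{ip_uniq''}--\eqref{ip_uniq'''} together with $V\embed L^6(\Omega)$, $V_\Gamma\embed L^q(\Gamma)$ in the weak class, followed by Gronwall. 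This is exactly the paper's route, including your remark that in the weak class the viscosity inclusion must be exploited through $\widetilde\alpha_w$ tested against $(\partial_t u,\partial_t v)\in\V$.
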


Finally, the last result that we present investigates the asymptotic behaviour 
of the solutions with respect to $\eps$, and provides a further existence result
for the problem with $\eps=0$. For sake of brevity, we only consider the case of
the linearity assumption \eqref{alpha_sub} on the growth of $\alpha$ and $\alpha_\Gamma$, 
and provide different asymptotic convergence of the solutions
depending on whether the coercivity assumption \eqref{coerc1} is in order.
In this direction, we need to introduce a weak 
formulation of the operator $\beta_\Gamma$ induced on the space $H^{1/2}(\Gamma)$.
Namely, we define $\beta_{\Gamma w}:H^{1/2}(\Gamma)\to 2^{H^{-1/2}(\Gamma)}$ as
the maximal monotone operator
\[
  \beta_{\Gamma w}(x):=\left\{y \in H^{-1/2}(\Gamma):
  \int_\Gamma\widehat\beta_\Gamma(x) + \ip{y}{w-x}_{H^{1/2}(\Gamma)} \leq \int_\Gamma\widehat\beta_\Gamma(w)
  \quad\forall\,w\in H^{1/2}(\Gamma)\right\}\,.
\]
\begin{thm}
  \label{thm5}
  Assume conditions \eqref{g}, \eqref{u0_mean}--\eqref{coerc1} and let
  \[
  u_0\in V\,, \qquad \widehat\beta(u_0)\in L^1(\Omega)\,, \qquad \widehat\beta_\Gamma(u_{0|\Gamma})\in L^1(\Gamma)\,.
  \]
  Let $(u_0^\eps)_\eps\subseteq V$ be any family such that 
  $u_0^\eps$ satisfies \eqref{u0} for every $\eps>0$,
  \begin{gather}
  u_0^\eps\to u_0 \quad\text{in V}\,, \qquad
  \eps^{1/2}u_{0|\Gamma}^\eps \to 0 \quad\text{in } V_\Gamma \qquad\text{as } \eps\searrow0\,,\\
  \label{est_init}
  \eps\norm{u_{0|\Gamma}^\eps}^2_{V_\Gamma}+
  \norm{\widehat\beta(u_0^\eps)}_{L^1(\Omega)}+
  \norm{\widehat\beta_\Gamma(u_{0|\Gamma}^\eps)}_{L^1(\Gamma)}\leq c \qquad\forall\,\eps>0
  \end{gather}
  for a positive constant $c$, and let 
  $(u_\eps,v_\eps,\mu_\eps,\eta_\eps,\xi_\eps,\eta_{\Gamma\eps},\xi_{\Gamma\eps})$
  be the solutions given by Theorem~\ref{thm1} satisfying conditions \eqref{u}--\eqref{3} with initial datum $u_0^\eps$.
  Then, there exists a sequence $(\eps_n)_n$ with $\eps_n\to0$ as $n\to\infty$
  and a septuple $(u,v,\mu,\eta,\xi,\eta_\Gamma,\xi_\Gamma)$ with
  \begin{gather}
  \label{u_eps0}
  u \in L^\infty(0,T; V)\cap H^1(0,T; H)\,, \quad \Delta u \in L^2(0,T; H)\,,\\
  v \in L^\infty(0,T; H^{1/2}(\Gamma))\cap H^1(0,T; H_\Gamma)\,,\qquad
   \mu \in L^2(0,T; W_{\bf n})\,,\\
   \label{xi_eta_eps0}
   \eta, \xi \in L^2(0,T; H)\,, \quad \eta_\Gamma \in L^2(0,T; H_\Gamma)\,, \quad
   \xi_\Gamma \in L^2(0,T; H^{-1/2}(\Gamma))\,,\\
   \eta\in\alpha(\partial_t u)\,, \quad \xi\in\beta(u) \quad\text{a.e.~in } Q\,, \qquad
   \eta_\Gamma\in\alpha_\Gamma(\partial_t v)\quad\text{a.e.~in } \Sigma\,,\\
   \xi_\Gamma \in \beta_w(v)\quad\text{a.e.~in } (0,T)\,,
  \end{gather}
  satisfying \eqref{cond}, \eqref{1}--\eqref{2} and
  \[
    \eta_\Gamma + \partial_{\bf n} u + \xi_\Gamma + \pi_\Gamma(v) = g_\Gamma\,,
  \]
  and such that, as $n\to\infty$,
  \begin{gather*}
  u_{\eps_n} \to u \quad\text{in } C^0([0,T]; H)\,, \quad
  u_{\eps_n} \wto u \quad\text{in } H^1(0,T; H)\,, \quad
  u_{\eps_n} \wstarto u \quad\text{in } L^\infty(0,T; V)\,,\\
  v_{\eps_n} \to v \quad\text{in } C^0([0,T]; H_\Gamma)\,, \quad
  v_{\eps_n} \wto v \quad\text{in } H^1(0,T; H_\Gamma)\,, \quad
  v_{\eps_n} \wstarto v \quad\text{in } L^\infty(0,T; H^{1/2}(\Gamma))\,,\\
  \mu_{\eps_n}\wto\mu \quad\text{in } L^2(0,T; W_{\bf n})\,,\\
  \eta_{\eps_n}\wto\eta \quad\text{in } L^2(0,T; H)\,,\qquad
  \xi_{\eps_n}\wto\xi \quad\text{in } L^2(0,T; H)\,,\\
  \eta_{\Gamma\eps_n}\wto\eta_\Gamma \quad\text{in } L^2(0,T; H_\Gamma)\,,\qquad
  \xi_{\Gamma\eps_n}\wto\xi_\Gamma \quad\text{in } L^2(0,T; H^{-1/2}(\Gamma))\,,\\
  \eps v_{\eps_n}\to 0 \quad\text{in } L^\infty(0,T; V_\Gamma)\,.
  \end{gather*}
  Furthermore, if also hypotheses \eqref{g'}--\eqref{u0'_ter} hold and $(\eps u_{0|\Gamma}^\eps)_\eps$
  is bounded in $W_\Gamma$, then the same conclusion is true
  without the coercivity assumption \eqref{coerc1}, and we also have
  \begin{gather*}
  u \in W^\infty(0,T; V^*)\cap H^1(0,T; V)\,, \quad \Delta u \in L^\infty(0,T; H)\,,\\
  v \in W^{1,\infty}(0,T; H_\Gamma)\cap H^1(0,T; H^{1/2}(\Gamma))\,,\qquad
   \mu \in L^\infty(0,T; V)\cap L^2(0,T; W_{\bf n}\cap H^3(\Omega))\,,\\
   \eta, \xi \in L^\infty(0,T; H)\,, \quad \eta_\Gamma \in L^\infty(0,T; H_\Gamma)\,, \quad
   \xi_\Gamma \in L^\infty(0,T; H^{-1/2}(\Gamma))\,,
  \end{gather*}
  and
  \begin{gather*}
    u_{\eps_n} \wstarto u \quad\text{in } W^{1,\infty}(0,T; V^*)\,, \qquad
  u_{\eps_n} \wto u \quad\text{in } H^1(0,T; V)\,,\\
  v_{\eps_n} \wstarto v \quad\text{in } W^{1,\infty}(0,T; H_\Gamma)\,, \qquad
  v_{\eps_n} \wto v \quad\text{in } H^1(0,T; H^{1/2}(\Gamma))\,,\\
  \mu_{\eps_n}\wstarto\mu \quad\text{in } L^\infty(0,T; V)\,,\qquad
  \mu_\eps\wto \mu \quad\text{in } L^2(0,T; W_{\bf n}\cap H^3(\Omega))\,,\\
  \eta_{\eps_n}\wstarto\eta \quad\text{in } L^\infty(0,T; H)\,,\qquad
  \xi_{\eps_n}\wstarto\xi \quad\text{in } L^\infty(0,T; H)\,,\\
  \eta_{\Gamma\eps_n}\wstarto\eta_\Gamma \quad\text{in } L^\infty(0,T; H_\Gamma)\,,\qquad
  \xi_{\Gamma\eps_n}\wstarto\xi_\Gamma \quad\text{in } L^\infty(0,T; H^{-1/2}(\Gamma))\,,\\
  \eps v_{\eps_n}\to 0 \quad\text{in } H^1(0,T; V_\Gamma)\,.
  \end{gather*}
\end{thm}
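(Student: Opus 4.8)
The plan is to run the usual compactness\,+\,monotonicity scheme for singular limits, working with the family $(u_\eps,v_\eps,\mu_\eps,\eta_\eps,\xi_\eps,\eta_{\Gamma\eps},\xi_{\Gamma\eps})_\eps$ of Theorem~\ref{thm1}-solutions and letting $\eps\searrow0$ along a subsequence. The first task is to collect $\eps$\emph{-uniform} a priori bounds: these are the $\eps$-independent counterparts of the estimates already performed in Section~\ref{approx} and in the proof of Theorem~\ref{thm1}, the only care needed being that every contribution of the boundary diffusion is either kept in terms of $\eps^{1/2}v_\eps$ (controlled by \eqref{est_init}) or discarded because of a favourable sign. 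Testing \eqref{1} by $\mu_\eps$ and using \eqref{3} on $\Gamma$, together with \eqref{coerc1}, \eqref{coerc2}, the conservation $(u_\eps)_\Omega=(u_0^\eps)_\Omega$ and a Gronwall argument (Poincar\'e absorbing the $\widehat\pi$, $\widehat\pi_\Gamma$ terms), yields $u_\eps$ bounded in $L^\infty(0,T;V)\cap H^1(0,T;H)$, $v_\eps$ in $H^1(0,T;H_\Gamma)$ with $\eps^{1/2}v_\eps$ bounded in $L^\infty(0,T;V_\Gamma)$ (hence, since $v_\eps=u_{\eps|\Gamma}$, $v_\eps$ bounded in $L^\infty(0,T;H^{1/2}(\Gamma))$), $\nabla\mu_\eps$ bounded in $L^2(0,T;H)$, and $\int_\Omega\widehat\beta(u_\eps)$, $\int_\Gamma\widehat\beta_\Gamma(v_\eps)$ bounded in $L^\infty(0,T)$, all uniformly in $\eps$. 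Exploiting \eqref{u0_mean} one controls $\|\xi_\eps\|_{L^1(Q)}$, $\|\xi_{\Gamma\eps}\|_{L^1(\Sigma)}$, hence $(\mu_\eps)_\Omega$, so that $\mu_\eps$ is bounded in $L^2(0,T;V)$ and, via $\Delta\mu_\eps=\partial_t u_\eps$ and elliptic regularity, in $L^2(0,T;W_{\bf n})$. By the linear growth \eqref{alpha_sub}, $\eta_\eps,\eta_{\Gamma\eps}$ are bounded in $L^2(0,T;H)$, $L^2(0,T;H_\Gamma)$; a further classical estimate (truncations of $\xi_\eps$, using \eqref{dom1}--\eqref{coerc2}, as in the proof of Theorem~\ref{thm1}) gives $\xi_\eps$ bounded in $L^2(0,T;H)$, whence from \eqref{2} $\Delta u_\eps$ is bounded in $L^2(0,T;H)$ and therefore $\partial_{\bf n}u_\eps$ is bounded in $L^2(0,T;H^{-1/2}(\Gamma))$. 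Finally, reading \eqref{3} as $\xi_{\Gamma\eps}=(g_\Gamma-\eta_{\Gamma\eps}-\partial_{\bf n}u_\eps-\pi_\Gamma(v_\eps))+\eps\Delta_\Gamma v_\eps$ and noting that $\eps\Delta_\Gamma v_\eps=\eps^{1/2}\,\Delta_\Gamma(\eps^{1/2}v_\eps)\to0$ in $L^\infty(0,T;V_\Gamma^*)$ while the bracket is bounded in $L^2(0,T;H^{-1/2}(\Gamma))$ uniformly, one sees that the weak limit of $\xi_{\Gamma\eps}$ will live in $L^2(0,T;H^{-1/2}(\Gamma))$ but $\xi_{\Gamma\eps}$ itself is \emph{not} better than $L^2(0,T;V_\Gamma^*)$-bounded — this is precisely why the boundary potential must be read in the weak sense $\beta_{\Gamma w}$.

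Next I would pass to the limit. By Aubin--Lions--Simon there is a subsequence $\eps_n\searrow0$ with $u_{\eps_n}\to u$ in $C^0([0,T];H)$ and $v_{\eps_n}\to v$ in $C^0([0,T];H_\Gamma)$, together with all the weak and weak-$*$ convergences in the statement; in particular $\Delta u_{\eps_n}\wto\Delta u$ in $L^2(0,T;H)$, $\partial_{\bf n}u_{\eps_n}\wto\partial_{\bf n}u$ in $L^2(0,T;H^{-1/2}(\Gamma))$, $\eps_n v_{\eps_n}\to0$ in $L^\infty(0,T;V_\Gamma)$, and $v=u_{|\Gamma}$ a.e.\ on $\Sigma$. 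Letting $n\to\infty$ in \eqref{1}, in \eqref{2} (using $\pi(u_{\eps_n})\to\pi(u)$) and in \eqref{3} (using $\pi_\Gamma(v_{\eps_n})\to\pi_\Gamma(v)$ and $\eps_n\Delta_\Gamma v_{\eps_n}\to0$) gives \eqref{1}, \eqref{2}, and the limit boundary equation $\eta_\Gamma+\partial_{\bf n}u+\xi_\Gamma+\pi_\Gamma(v)=g_\Gamma$ in $L^2(0,T;H^{-1/2}(\Gamma))$. The inclusion $\xi\in\beta(u)$ a.e.\ in $Q$ is immediate from the strong convergence $u_{\eps_n}\to u$ and the closedness of $\beta$ realized on $L^2(Q)$.

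The heart of the proof is the identification of the remaining nonlinearities by monotonicity. For $\xi_\Gamma\in\beta_{\Gamma w}(v)$ I would use demiclosedness of the maximal monotone operator induced by $\beta_{\Gamma w}$ on $L^2(0,T;H^{1/2}(\Gamma))$: I have $v_{\eps_n}\wto v$ in $L^2(0,T;H^{1/2}(\Gamma))$, $\xi_{\Gamma\eps_n}\wto\xi_\Gamma$ in $L^2(0,T;V_\Gamma^*)$ with $\xi_\Gamma\in L^2(0,T;H^{-1/2}(\Gamma))$, and $\xi_{\Gamma\eps_n}\in\beta_\Gamma(v_{\eps_n})$ a.e.\ on $\Sigma$, so it remains to check $\limsup_n\int_\Sigma\xi_{\Gamma\eps_n}v_{\eps_n}\le\int_0^T\langle\xi_\Gamma,v\rangle_{H^{1/2}(\Gamma)}$ (the contribution of the vanishing term $\eps_n\Delta_\Gamma v_{\eps_n}$ carries a sign and a coefficient $\to0$, so it is harmless here). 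To get this limsup I substitute \eqref{3} into $\int_\Sigma\xi_{\Gamma\eps_n}v_{\eps_n}$, use $v_{\eps_n}=u_{\eps_n|\Gamma}$ and Green's formula to write $-\int_\Sigma\partial_{\bf n}u_{\eps_n}v_{\eps_n}=-\int_Q\Delta u_{\eps_n}\,u_{\eps_n}-\int_Q|\nabla u_{\eps_n}|^2$, note that $-\eps_n\int_\Sigma|\nabla_\Gamma v_{\eps_n}|^2\le0$, and pass to the limit: every term converges (weak $\times$ strong) except $-\int_Q|\nabla u_{\eps_n}|^2$, whose limsup is $\le-\int_Q|\nabla u|^2$ by weak lower semicontinuity, and the upper bound thus obtained equals $\int_0^T\langle\xi_\Gamma,v\rangle_{H^{1/2}(\Gamma)}$ exactly because of the limit boundary equation. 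Applying the same demiclosedness to the product operator $\alpha\times\alpha_\Gamma$ realized on $L^2(0,T;\H)$, I obtain $\eta\in\alpha(\partial_t u)$ and $\eta_\Gamma\in\alpha_\Gamma(\partial_t v)$ once I prove $\limsup_n(\int_Q\eta_{\eps_n}\partial_t u_{\eps_n}+\int_\Sigma\eta_{\Gamma\eps_n}\partial_t v_{\eps_n})\le\int_Q\eta\,\partial_t u+\int_\Sigma\eta_\Gamma\,\partial_t v$: for this I write the $\eps_n$-energy identity (testing \eqref{1} by $\mu$, \eqref{2} by $\partial_t u$, \eqref{3} by $\partial_t v$), take its limsup using weak lower semicontinuity of $\int|\nabla\mu|^2$, $\|\nabla u(T)\|^2$, $\int\widehat\beta(u(T))$, $\int\widehat\beta_\Gamma(v(T))$, the convergence of the initial energies (part of the hypotheses on $(u_0^\eps)$, together with $\eps^{1/2}u_{0|\Gamma}^\eps\to0$ in $V_\Gamma$), and the nonnegativity of $\tfrac{\eps_n}{2}\|\nabla_\Gamma v_{\eps_n}(T)\|^2$, and compare with the energy identity satisfied by the limit septuple — available now that all limit equations and inclusions needed to write it are at hand.

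For the last part of the statement (removal of \eqref{coerc1}), I would, under \eqref{g'}--\eqref{u0'_ter} and with $(\eps u_{0|\Gamma}^\eps)_\eps$ bounded in $W_\Gamma$, reproduce with $\eps$ kept explicit the ``second estimate'' from the proofs of Theorems~\ref{thm2}--\ref{thm3} (differentiating the system in time and/or testing with time derivatives of suitable quantities, invoking \eqref{u0'_ter} to bound the relevant family in $V$); the boundedness of $(\eps u_{0|\Gamma}^\eps)_\eps$ in $W_\Gamma$ is exactly what keeps the term $\eps\Delta_\Gamma v_0^\eps$ arising in that computation under control. This yields the $\eps$-uniform analogues of \eqref{u'}--\eqref{xi_eta'} and $\mu_\eps$ bounded in $L^\infty(0,T;V)\cap L^2(0,T;W_{\bf n}\cap H^3(\Omega))$, whereupon the compactness and monotonicity arguments above apply verbatim in the stronger topologies, producing the stated enhanced regularity and convergences. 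I expect the genuinely delicate points to be twofold: first, recognizing in the a priori estimates that near $\eps=0$ the quantities $\partial_{\bf n}u_\eps$ and $\xi_{\Gamma\eps}$ are only uniformly bounded in the dual scale $H^{-1/2}(\Gamma)$ (resp.\ $V_\Gamma^*$), which forces the weak formulation $\beta_{\Gamma w}$; and second, the monotonicity step, where the needed limsup inequalities must be squeezed out of the $\eps_n$-energy identity by combining weak lower semicontinuity with the favourable sign and the vanishing of the boundary-diffusion contributions, and then matched against the limit energy identity.
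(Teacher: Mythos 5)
Your overall strategy is the same as the paper's: inherit the $\eps$-independent estimates from the proof of Theorem~\ref{thm1} (resp.\ Theorem~\ref{thm2} for the second part, where you correctly identify that the $W_\Gamma$-bound on $\eps u^\eps_{0|\Gamma}$ is what makes the constant of Lemma~\ref{init_reg} $\eps$-independent), pass to the limit by Aubin--Lions--Simon, and identify $\beta$, $\beta_{\Gamma w}$, $\alpha$, $\alpha_\Gamma$ by limsup/monotonicity arguments; your identification of $\eta,\eta_\Gamma$ via the energy identity and weak lower semicontinuity is exactly the paper's. For $\xi_\Gamma\in\beta_{\Gamma w}(v)$ you take a slightly different but legitimate route: you substitute \eqref{3} into $\int_\Sigma\xi_{\Gamma\eps}v_\eps$, use Green's formula in the bulk and the sign of $-\eps\int_\Sigma|\nabla_\Gamma v_\eps|^2$, whereas the paper tests \eqref{1} by $\mathcal N(u_\eps-(u_0^\eps)_\Omega)$ and \eqref{2} by $-(u_\eps-(u_0^\eps)_\Omega)$; both produce the same limsup inequality, matched against the limit boundary equation through the generalized Green formula.

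There is, however, one concrete gap. You assert that $\xi_{\Gamma\eps}$ is ``not better than $L^2(0,T;V_\Gamma^*)$-bounded''; this is both incorrect and insufficient. The theorem claims $\xi_{\Gamma\eps_n}\wto\xi_\Gamma$ in $L^2(0,T;H^{-1/2}(\Gamma))$, which requires an $\eps$-uniform bound of $\xi_{\Gamma\eps}$ in that space, and your demiclosedness argument for $\beta_{\Gamma w}$ is set up in the $H^{1/2}$--$H^{-1/2}$ duality, so it also wants the approximating sequence (not just its limit) to live there. The paper obtains this bound by an interpolation you are missing: from the inherited estimates one has $\eps^{1/2}\norm{\Delta_\Gamma v_\eps}_{L^\infty(0,T;V_\Gamma^*)}\leq c$ (since $\eps^{1/2}v_\eps$ is bounded in $L^\infty(0,T;V_\Gamma)$ and $-\Delta_\Gamma:V_\Gamma\to V_\Gamma^*$ is bounded) \emph{and} $\eps^{3/2}\norm{\Delta_\Gamma v_\eps}_{L^2(0,T;H_\Gamma)}\leq c$ (the $\eps$-explicit version of the third estimate, cf.~\eqref{est13}), whence by interpolation $\eps\norm{\Delta_\Gamma v_\eps}_{L^2(0,T;H^{-1/2}(\Gamma))}\leq c$ and, by comparison in \eqref{3} together with $\eps^{1/2}\norm{\partial_{\bf n}u_\eps}_{L^2(0,T;H_\Gamma)}\leq c$ and the $H^{-1/2}$-bound on $\partial_{\bf n}u_\eps$, $\norm{\xi_{\Gamma\eps}}_{L^2(0,T;H^{-1/2}(\Gamma))}\leq c$. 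Without this step your argument only yields $\xi_{\Gamma\eps_n}\wto\xi_\Gamma$ in $L^2(0,T;V_\Gamma^*)$, and the monotonicity identification would have to be patched (e.g.\ by testing the subdifferential inequality with $\psi\in L^2(0,T;V_\Gamma)$ and a density argument after recovering $\xi_\Gamma\in L^2(0,T;H^{-1/2}(\Gamma))$ from the limit equation), while the stated convergence would still not be proved. The same interpolation is used again, in $L^\infty$ in time, in the second part of the theorem.
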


\begin{rmk}
  Let us comment on the existence of an approximating family $(u_0^\eps)_\eps$.
  If the initial datum $u_0\in V$ satisfies $a\leq u_0 \leq b$ almost everywhere in $\Omega$
  for certain $a,b\in\erre$ such that $[a,b]\subseteq\operatorname{Int}D(\beta_\Gamma)$,
  then a possible approximating sequence $(u_0^\eps)_\eps$ always exists. Indeed, we can set,
  for every $\eps>0$, $u_0^\eps$ as the unique solution to the elliptic problem
  \[
    \begin{cases}
    u_0^\eps - \eps^{1/2}\Delta u_0^\eps = u_0 \quad&\text{in } \Omega\,,\\
    \partial_{\bf n} u_0^\eps =0 \quad&\text{in } \Gamma\,.
    \end{cases}
  \]
  Such problem is well-posed by the classical theory on bilinear forms
  and admits a unique solution $u_0^\eps \in W_{\bf n}\cap H^3(\Omega)$.
  Testing by $u_0^\eps$ and using the Young inequality one has
  \[
  \frac12\norm{u_0^\eps}_H^2 + \eps^{1/2}\norm{\nabla u_0^\eps}_H^2 
  \leq \frac12\norm{u_0}_H^2 \,,
  \]
  while testing the first equation by $-\Delta u_0^\eps$ and integrating by parts yields
  \[
  \frac12\norm{\nabla u_0^\eps}^2_H + \eps^{1/2}\norm{\Delta u_0^\eps}_H^2
  \leq \frac12\norm{\nabla u_0}^2_H\,.
  \]
  We infer that (along a subsequence)
  \[
  u_0^\eps \wto u_0 \quad\text{in } V\,, \qquad \norm{u_0^\eps}_V\leq\norm{u_0}_V\quad\forall\,\eps>0\,,
  \]
  so that $u_0^\eps\to u_0$ in $V$, hence also $\eps^{1/2}\Delta u_0^\eps\to 0$ in $V$.
  We deduce then that $\eps^{1/2}u_\eps \to 0$ in $H^3(\Omega)$, which implies in particular
  that $\eps^{1/2}u_{0|\Gamma}^\eps\to 0$ in $V_\Gamma$. Furthermore, by 
  the maximum principle we have $a\leq u_0^\eps\leq b$ a.e.~in $\Omega$, hence also
  $a\leq u_{0|\Gamma}^\eps\leq b$ a.e.~in $\Sigma$, and we can conclude
  recalling that $D(\beta_\Gamma)\subseteq D(\beta)$ and the fact that 
  every proper, convex and lower semicontinuous function is continuous in the interior
  of its domain.
\end{rmk}


\section{The approximated problem}
\setcounter{equation}{0}
\label{approx}

In this section we approximate the problem \eqref{eq1}--\eqref{init} and we precise the exact regularities of
the approximated solutions, depending on the assumptions on the data.
Note that throughout this section $\eps>0$ is fixed, so that we shall omit any 
specific notation for the dependence on~$\eps$.
\smallskip

For any $\lambda>0$, let $\beta_\lambda$ and $\beta_{\Gamma\lambda}$ be the Yosida approximations
of the graphs $\beta$ and $\beta_\Gamma$ with approximating parameters $\lambda$ and $c\lambda$, respectively,
where $c$ is the same as in \eqref{dom1}:
the reason why we choose this specific approximation will be 
clarified in Section~\ref{third} below. 
Similarly, let $\alpha_\lambda$ and $\alpha_{\Gamma\lambda}$ denote
the Yosida approximations of $\alpha$ and $\alpha_\Gamma$, respectively, with parameter~$\lambda$.
Furthermore, let $(g_\lambda)_\lambda$ and $(g_{\Gamma\lambda})_\lambda$ be two approximating
sequences of $g$ and $g_\Gamma$, respectively, such that
\begin{gather*}
  (g_\lambda)_\lambda \subseteq L^2(0,T; V)\cap H^1(0,T; V^*)\,, \qquad
  (g_{\Gamma\lambda})_\lambda \subseteq L^2(0,T; V_\Gamma)\cap H^1(0,T; V_\Gamma^*)\,,\\
  g_\lambda\to g \quad\text{in } L^2(0,T; H)\,, \qquad
   g_{\Gamma\lambda}\to g_\Gamma \quad\text{in } L^2(0,T; H_\Gamma)\,.
\end{gather*}
It will be implicitly intended that the convergences hold also in the spaces 
$H^1(0,T; V^*)$ and $H^1(0,T; V_\Gamma^*)$ whenever \eqref{g'} is in order.
For example, we can define $g:=(I-\lambda\Delta_{\bf n})^{-1}g$ and 
$g_\Gamma:=(I-\lambda\Delta_\Gamma)^{-1}g_\Gamma$, i.e.~as the solutions to the following 
elliptic problems:
\[ 
  \begin{cases}
    g_\lambda - \lambda\Delta g_\lambda = g \quad&\text{in } \Omega\,,\\
    \partial_{\bf n} g_\lambda = 0 \quad&\text{in } \Gamma\,,
  \end{cases}
  \qquad
  g_{\Gamma\lambda} - \lambda\Delta_\Gamma g_{\Gamma\lambda} = g_\Gamma \quad\text{in } \Gamma\,.
\]

The idea is to consider the regularized system given by
\begin{align*}
  \partial_t u_\lambda + \lambda\mu_\lambda - \Delta\mu_\lambda = 0 \qquad&\text{in } Q\,,\\
  \mu_\lambda = \lambda \partial_t u_\lambda + \alpha_\lambda(\partial_t u_\lambda) +\lambda u_\lambda - \Delta u_\lambda
  +\beta_\lambda(u_\lambda) + T_\lambda\pi(u_\lambda) - g_\lambda \qquad&\text{in } Q\,,\\
  u_\lambda=v_\lambda\,, \quad \partial_{\bf n}\mu_\lambda=0 \qquad&\text{in } \Sigma\,,\\
  \lambda\partial_t v_\lambda + \alpha_{\Gamma\lambda}(\partial_tv_\lambda) + \partial_{\bf n}u_\lambda
  - \eps\Delta_\Gamma v_\lambda
  +\beta_{\Gamma\lambda}(v_\lambda) + T_\lambda\pi_\Gamma(v_\lambda) = g_{\Gamma\lambda} \qquad&\text{in } \Sigma\,,\\
  u_\lambda(0)=u_0 \qquad&\text{in } \Omega\,,
\end{align*}
where $T_\lambda:\erre\to\erre$ is the usual truncation operator at level $\frac1\lambda$ defined by
\[
  T_\lambda(r):=\max\left\{-\frac1\lambda, \min\left\{\frac1\lambda, r\right\}\right\}\,, \quad r\in\erre\,.
\]

In order to show that such regularized problem is well-posed, we use an abstract result
on doubly nonlinear evolution equations on the product space $\H$.
To this end, we introduce the operator
\[
  G_\lambda:H\to H\,, \qquad G_\lambda x:=\lambda x-\Delta x\,, \quad x\in D(G_\lambda):=W_{\bf n}\,,
\]
which is maximal monotone and invertible on $H$ with $G_\lambda^{-1}:H\to W_{\bf n}$; 
in particular, the first equation together with the boundary 
condition for $\mu_\lambda$ can be written as $\mu_\lambda=-G_\lambda^{-1}(\partial_t u_\lambda)$.
Hence, it is natural to define
\begin{align*}
  A_\lambda:\H\to\H\,, \qquad &A_\lambda(x,y):=
  (\lambda x + \alpha_\lambda(x) + G_\lambda^{-1}(x), \lambda y + \alpha_{\Gamma\lambda}(y))\,,\\
  B_\lambda:\H\to\H\,, \qquad 
  &B_\lambda(x,y):=(\lambda x-\Delta x + \beta_\lambda(x), \partial_{\bf n}x -\eps\Delta_\Gamma y + 
  \beta_{\Gamma\lambda}(y))\,,
\end{align*}
where
\[
  D(A_\lambda):=\H\,, \qquad
  D(B_\lambda):=\W\,.
\]
Taking into account the definition of $A_\lambda$ and $B_\lambda$,
the entire approximated system can be formulated as a doubly nonlinear 
evolution equation in the variable $(u_\lambda, v_\lambda)$ on the product space $\H$
in the following compact form:
\[
  A_\lambda\partial_t(u_\lambda,v_\lambda) + B_\lambda(u_\lambda,v_\lambda)=
  (g_\lambda,g_{\Gamma\lambda}) - (T_\lambda\pi(u_\lambda), T_\lambda\pi_\Gamma(v_\lambda))\,, \qquad
  (u_\lambda, v_\lambda)(0) = (u_0, u_{0|\Gamma})\,.
\]
We collect some useful properties of the operators $A_\lambda$ and $B_\lambda$ in the following lemma.
\begin{lem}
  \label{prop}
  The operators $A_\lambda$ and $B_\lambda$ are maximal monotone on $\H$
  and $D(B_\lambda)\subseteq \V$. Moreover, the following conditions hold:
  \begin{align*}
    (i)&\qquad \forall\,(x,y)\in\H \quad \left(A_\lambda(x,y), (x,y)\right)_\H\geq\lambda\norm{(x,y)}_\H^2\,,\\
    (ii)&\qquad \exists\,k_\lambda>0:\quad\forall\,(x,y)\in\H 
    \quad \norm{A_\lambda(x,y)}_\H\leq k_\lambda\norm{(x,y)}_\H\,,\\
    (iii)&\qquad B_\lambda=\partial\psi_\lambda\,, \quad \psi_\lambda:\H\to(-\infty,+\infty] \text{ proper, convex and l.s.c.}\,,
    \quad D(\psi_\lambda)\subseteq\V\\
    (iv)&\qquad \exists\,\ell_1, \ell_2>0:\quad \psi_\lambda(x,y)\geq\ell_1\norm{(x,y)}_{\V}^2-\ell_2\norm{(x,y)}_\H^2
    \quad\forall\,(x,y)\in D(\psi_\lambda)\,,\\
    (v)&\qquad A_\lambda=\partial\phi_\lambda\,, \quad \phi_\lambda:\H\to(-\infty,+\infty] \text{ proper, convex and l.s.c.}\,,
    \quad D(\phi_\lambda)=\H\,,\\
    (vi)&\qquad A_\lambda \quad\text{is bounded in } \H\,,\\
    (vii)&\qquad B_\lambda:\V\to\V^* \quad\text{is Lipschitz continuous and strongly monotone}\,.
  \end{align*}
\end{lem}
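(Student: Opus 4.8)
The plan is to exhibit $A_\lambda$ and $B_\lambda$ as subdifferentials of explicit convex functionals on $\H$ and then read off all the listed properties; the only genuinely non-routine point will be an elliptic regularity identification for $B_\lambda$.

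First I would treat $A_\lambda$. Let $\widehat\alpha_\lambda,\widehat\alpha_{\Gamma\lambda}$ be the Moreau envelopes of $\widehat\alpha,\widehat\alpha_\Gamma$ at parameter $\lambda$, so that $\alpha_\lambda=\partial\widehat\alpha_\lambda$, $\alpha_{\Gamma\lambda}=\partial\widehat\alpha_{\Gamma\lambda}$ are $1/\lambda$-Lipschitz with $\alpha_\lambda(0)=\alpha_{\Gamma\lambda}(0)=0$ (using $0\in\alpha(0)$, which follows from $\widehat\alpha\geq0=\widehat\alpha(0)$) and $0\leq\widehat\alpha_\lambda(r)\leq\frac1{2\lambda}r^2$. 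Since $G_\lambda:W_{\bf n}\to H$ is a linear isomorphism with $(G_\lambda x,x)_H\geq\lambda\norm{x}_H^2$, its inverse $G_\lambda^{-1}$ is a self-adjoint positive operator in $\mathcal L(H)$ with $\norm{G_\lambda^{-1}}_{\mathcal L(H)}\leq1/\lambda$. I would then set
\[
  \phi_\lambda(x,y):=\tfrac\lambda2\norm{(x,y)}_\H^2+\int_\Omega\widehat\alpha_\lambda(x)+\int_\Gamma\widehat\alpha_{\Gamma\lambda}(y)+\tfrac12(G_\lambda^{-1}x,x)_H\,,
\]
which is finite, convex and continuous on $\H$ (the integral terms are finite and continuous on $L^2$ because $\widehat\alpha_\lambda$ is $C^1$ with globally Lipschitz derivative and quadratic growth; the last term is a continuous positive quadratic form). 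By the subdifferential calculus for everywhere-defined continuous convex functionals, $\partial\phi_\lambda(x,y)=\lambda(x,y)+(\alpha_\lambda(x),\alpha_{\Gamma\lambda}(y))+(G_\lambda^{-1}x,0)=A_\lambda(x,y)$, which yields $(v)$ and the maximal monotonicity of $A_\lambda$. Then $(i)$ follows by pairing with $(x,y)$, using $\alpha_\lambda(r)r\geq0$, $\alpha_{\Gamma\lambda}(s)s\geq0$ (monotonicity plus $\alpha_\lambda(0)=\alpha_{\Gamma\lambda}(0)=0$) and $(G_\lambda^{-1}x,x)_H\geq0$; $(ii)$, and hence $(vi)$, follows from the linear bounds $\abs{\alpha_\lambda(r)}\leq\frac1\lambda\abs r$, $\abs{\alpha_{\Gamma\lambda}(r)}\leq\frac1\lambda\abs r$ and $\norm{G_\lambda^{-1}}_{\mathcal L(H)}\leq1/\lambda$, giving $k_\lambda:=\lambda+2/\lambda$.

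For $B_\lambda$ I would introduce, with $\widehat\beta_\lambda,\widehat\beta_{\Gamma\lambda}$ the Moreau envelopes at parameters $\lambda$ and $c\lambda$,
\[
  \psi_\lambda(x,y):=\begin{cases}\tfrac\lambda2\norm x_H^2+\tfrac12\norm{\nabla x}_H^2+\tfrac\eps2\norm{\nabla_\Gamma y}_{H_\Gamma}^2+\int_\Omega\widehat\beta_\lambda(x)+\int_\Gamma\widehat\beta_{\Gamma\lambda}(y)&(x,y)\in\V,\\[1mm]+\infty&\text{otherwise.}\end{cases}
\]
This is proper and convex, and its domain lies in $\V$; the lower bound $(iv)$ is immediate by dropping the nonnegative terms, $\psi_\lambda(x,y)\geq\tfrac12\norm{\nabla x}_H^2+\tfrac\eps2\norm{\nabla_\Gamma y}_{H_\Gamma}^2\geq\ell_1\norm{(x,y)}_\V^2-\ell_2\norm{(x,y)}_\H^2$ with $\ell_1=\ell_2=\min\{1/2,\eps/2\}$, recalling $\norm{(x,y)}_\V^2=\norm{\nabla x}_H^2+\norm x_H^2+\norm{\nabla_\Gamma y}_{H_\Gamma}^2+\norm y_{H_\Gamma}^2$. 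Using $(iv)$, any $\H$-convergent sequence with bounded energy is $\V$-bounded, so its limit lies in $\V$ and, $\psi_\lambda$ being convex and strongly continuous hence weakly lower semicontinuous on $\V$, one gets lower semicontinuity of $\psi_\lambda$ on $\H$. To compute $\partial\psi_\lambda$, I would check that for $(x,y)\in\W$ and any $(\varphi,\psi)\in\V$ the boundary term $-\int_\Gamma(\partial_{\bf n}x)\varphi|_\Gamma$ produced by integrating by parts in $\Omega$ cancels exactly with $\int_\Gamma(\partial_{\bf n}x)\psi$ from the surface component (since $\varphi|_\Gamma=\psi$), leaving
\[
  \big(B_\lambda(x,y),(\varphi,\psi)\big)_\H=\lambda\int_\Omega x\varphi+\int_\Omega\nabla x\cdot\nabla\varphi+\int_\Omega\beta_\lambda(x)\varphi+\eps\int_\Gamma\nabla_\Gamma y\cdot\nabla_\Gamma\psi+\int_\Gamma\beta_{\Gamma\lambda}(y)\psi\,,
\]
i.e.\ exactly the Gâteaux differential of $\psi_\lambda$, so $B_\lambda\subseteq\partial\psi_\lambda$. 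The reverse inclusion is the regularity claim $D(\partial\psi_\lambda)=\W$: if $(f,g)\in\partial\psi_\lambda(x,y)$ then $x$ solves $\lambda x-\Delta x=f-\beta_\lambda(x)$ in $\Omega$ with the Wentzell-type condition $\partial_{\bf n}x-\eps\Delta_\Gamma y=g-\beta_{\Gamma\lambda}(y)$ on $\Gamma$ and $x|_\Gamma=y$, and the usual bootstrap (from $\partial_{\bf n}x\in H^{-1/2}(\Gamma)$ one gets $y\in H^{3/2}(\Gamma)$, then $x\in H^2(\Omega)$, then $y\in H^2(\Gamma)$) gives $(x,y)\in\W$; here I would invoke the elliptic regularity for bulk--surface systems as in the literature on Cahn--Hilliard equations with dynamic boundary conditions (e.g.\ \cite{col-gil-spr,gil-mir-sch}). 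This yields $(iii)$, the maximal monotonicity of $B_\lambda$, and the inclusion $D(B_\lambda)=\W\subseteq\V$.

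Finally, $(vii)$ follows directly from the structure of $B_\lambda:\V\to\V^*$: for $(x_i,y_i)\in\V$ the pairing of the difference with $(x_1-x_2,y_1-y_2)$ equals $\lambda\norm{x_1-x_2}_H^2+\norm{\nabla(x_1-x_2)}_H^2+\eps\norm{\nabla_\Gamma(y_1-y_2)}_{H_\Gamma}^2$ plus the two nonnegative terms coming from the monotonicity of $\beta_\lambda,\beta_{\Gamma\lambda}$; since $y_1-y_2=(x_1-x_2)|_\Gamma$ and the trace $V\to H_\Gamma$ is continuous, one bounds $\norm{(x_1-x_2,y_1-y_2)}_\V^2$ by a multiple of $\norm{x_1-x_2}_V^2+\norm{\nabla_\Gamma(y_1-y_2)}_{H_\Gamma}^2$, giving strong monotonicity, while Lipschitz continuity is obtained by estimating each of the five terms in the formula above by $\norm{(x_1-x_2,y_1-y_2)}_\V\norm{(\varphi,\psi)}_\V$ (the $\beta$-terms via the $1/\lambda$- and $1/(c\lambda)$-Lipschitz bounds). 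The only step requiring real work is the elliptic regularity identification $D(\partial\psi_\lambda)=\W$ inside $(iii)$; everything else is routine manipulation with Yosida approximations, subdifferential calculus, and the trace inequality.
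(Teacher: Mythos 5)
Your proposal is correct and takes essentially the same route as the paper: the identical $\psi_\lambda$, a potential $\phi_\lambda$ for $A_\lambda$ that is the paper's one with the quadratic form $\frac12(G_\lambda^{-1}x,x)_H$ written in place of the convex conjugate $F_\lambda^*$, and the same routine computations for $(i)$, $(ii)$, $(iv)$, $(vi)$, $(vii)$. The one step you spell out explicitly --- the elliptic bootstrap giving $D(\partial\psi_\lambda)\subseteq\W$, hence $B_\lambda=\partial\psi_\lambda$ maximal monotone --- is exactly what the paper leaves as ``a standard matter,'' and your version of it is sound.
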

\begin{proof}
  It is clear that $A_\lambda$ and $B_\lambda$ are maximal monotone.
  By monotonicity of $\alpha_\lambda$ and $\alpha_{\Gamma\lambda}$ and the definition of $G_\lambda$,
  we have that 
  \[\begin{split}
  \left(A_\lambda(x,y), (x,y)\right)_\H&=\lambda\int_\Omega|x|^2 + \lambda\int_\Gamma|y|^2 + \int_\Omega\alpha_\lambda(x)x + 
  \int_\Gamma\alpha_{\Gamma\lambda}(y)y + \int_\Omega G_\lambda^{-1}(x)x\\
  &\geq\lambda\norm{(x,y)}_\H^2 + \lambda\int_\Omega|G_\lambda^{-1}(x)|^2 + \int_\Omega|\nabla G_\lambda^{-1}(x)|^2
  \geq\lambda\norm{(x,y)}_\H^2
  \end{split}\]
  for every $(x,y)\in\H$, from which the first condition. Secondly, for every $(x,y)\in\H$, the Lipschitz continuity of 
  $\alpha_\lambda$, $\alpha_{\Gamma\lambda}$ and the continuity of $G_\lambda^{-1}:\H\to W_{\bf n}$, we have
  \[
  \norm{A_\lambda(x,y)}_\H\leq\left(\lambda+\frac1\lambda\right)\norm{x,y}_\H + \norm{G_\lambda^{-1}(x)}_H
  \leq\left(\lambda+\frac1\lambda+\frac1{\sqrt\lambda}\right)\norm{x,y}_\H\,,
  \]
  from which the second condition. Furthermore, it is a standard matter to check that $(iii)$ holds with the choice
  $\psi_\lambda:\H\to[0,+\infty]$
  \[
  \psi_\lambda(x,y):=
  \begin{cases}
  \frac\lambda2\int_\Omega|x|^2 + \frac12\int_\Omega|\nabla x|^2 + 
  \frac\eps2\int_\Gamma|\nabla_\Gamma y|^2 + \int_\Omega\widehat{\beta}_\lambda(x)
  +\int_\Gamma\widehat{\beta}_{\Gamma\lambda}(y) &\text{ if } (x,y)\in \V\,,\\
  +\infty &\text{ otherwise}\,.
  \end{cases}
  \]
  It is clear that $D(\psi_\lambda)\subseteq\V$ and that, for every $(x,y)\in \V$,
  \[
  \psi_\lambda(x,y)\geq\frac\lambda2\int_\Omega|x|^2+\frac12\int_\Omega|\nabla x|^2+\frac\eps2\int_\Gamma|\nabla_\Gamma y|^2
  \geq\frac12\min\{1,\lambda,\eps\}\norm{(x,y)}_{\V}^2-\frac\eps2\norm{(x,y)}_\H^2
  \]
  and also condition $(iv)$ is proved. Moreover, it is readily seen that $(v)$ holds with
  \[
  \phi_\lambda(x,y):=\frac\lambda2\int_\Omega|x|^2 + \int_\Omega\widehat\alpha_\lambda(x)+
  F_\lambda^*(x) + \frac\lambda2\int_\Gamma|y|^2 + \int_\Gamma\widehat\alpha_{\Gamma\lambda}(y)\,,
  \qquad (x,y)\in\H\,,
  \]
  where $F_\lambda^*$ is the convex conjugate of the proper, convex, l.s.c.~function
  \[
    F_\lambda(x):=\begin{cases}
    \frac\lambda2\int_\Omega|x|^2 + \int_\Omega|\nabla x|^2 \quad&\text{if } x\in V\,,\\
    +\infty \quad&\text{if } x\in H\setminus V\,.
    \end{cases}
  \]
  Since $\partial\phi_\lambda=G_\lambda^{-1}$ is Lipschitz continuous on $H$, it is also clear that $D(\phi_\lambda)=H$,
  and $(v)$ is proved. Moreover, $(vi)$ is an easy consequence of the Lipschitz continuity of $\alpha_\lambda$,
  $\alpha_{\Gamma\lambda}$ and $G_\lambda^{-1}$ on $H$.
  Finally, let us focus on $(vii)$. In this case, we are looking at $B_\lambda$ as its weak 
  formulation $B_\lambda:\V\to\V^*$ given by
  \[
    \ip{B_\lambda(x,y)}{(z,w)}_{\V}=\lambda\int_\Omega xz + \int_\Omega\nabla x\cdot\nabla z
    +\int_\Omega \beta_\lambda(x)z + \eps\int_\Gamma\nabla_\Gamma y\cdot\nabla_\Gamma w
    +\int_\Gamma\beta_{\Gamma\lambda}(y)w\,.
  \]
  Hence, it follows by the Lipschitz continuity of $\beta_\lambda$ and $\beta_{\Gamma\lambda}$ that,
  for every $(x_1,y_1),(x_2,y_2)\in\V$
  \[\begin{split}
    &\norm{B_\lambda(x_1,y_1)-B_\lambda(x_2,y_2)}_{\V^*}\\
    &\qquad\leq\left(\lambda+\frac1\lambda\right)\norm{x_1-x_2}_H + \norm{\nabla(x_1-x_2)}_H 
    + \eps\norm{\nabla_\Gamma(y_1-y_2)}_{H_\Gamma} +\frac1{c\lambda}\norm{y_1-y_2}_{H_\Gamma}
  \end{split}\]
  from which the Lipschitz continuity of $B_\lambda$. Similarly, by the monotonicity of $\beta_\lambda$
  and $\beta_{\Gamma\lambda}$,
  \[\begin{split}
    &\ip{B_\lambda(x_1,y_1)-B_\lambda(x_2,y_2)}{(x_1,y_1)-(x_2,y_2)}_{\V}\\
    &\qquad\geq\lambda\norm{x_1-x_2}_H^2 + \norm{\nabla(x_1-x_2)}_H^2 + \eps\norm{\nabla_\Gamma(y_1-y_2)}_H^2
    \geq C_{\lambda\eps}\norm{(x_1,y_1)-(x_2,y_2)}_{\V}^2
  \end{split}\]
  for a certain positive constant $C_{\lambda\eps}$, from which the strong monotonicity of $B_\lambda$.
\end{proof}

Now, we fix $\lambda>0$ and we show that the approximated problem is well-posed. 
Given $(f,f_\Gamma)\in L^2(0,T; \H)$, Lemma~\ref{prop} and the hypotheses \eqref{g}--\eqref{u0} ensure that 
we can apply the existence result contained in \cite[Thm.~2.1]{colli-visin} and infer that there exists
\[
  (u_\lambda, v_\lambda) \in H^1(0,T; \H)\cap L^\infty(0,T; \V)\,, \qquad
  A_\lambda(\partial_tu_\lambda, \partial_tv_\lambda)\,,\; B_\lambda(u_\lambda,v_\lambda) \in L^2(0,T; \H)
\]
such that 
\begin{gather}
  \label{app1}
  A_\lambda\partial_t(u_\lambda,v_\lambda) + B_\lambda(u_\lambda,v_\lambda)=
  (g_\lambda,g_{\Gamma\lambda}) - (T_\lambda\pi(f), T_\lambda\pi_\Gamma(f_\Gamma))\quad\text{a.e.~in } (0,T)\,,\\
  \label{app2}
  (u_\lambda, v_\lambda)(0) = (u_0, u_{0|\Gamma})\,.
\end{gather}
Let us show that such solution $(u_\lambda, v_\lambda)$ is indeed unique and satisfies
useful estimates.
\begin{lem}
  \label{lem_app}
  For every $\lambda>0$, there exists $c_\lambda>0$ such that 
  \[
  \norm{\partial_tu_\lambda}_{L^2(0,T;H)} + \norm{\partial_tv_\lambda}_{L^2(0,T; H_\Gamma)}+
  \norm{u_\lambda}_{L^\infty(0,T; V)} + \norm{v_\lambda}_{L^\infty(0,T; V_{\Gamma})}\leq c_\lambda\,.
  \]
  Moreover, there is $c_\lambda'>0$ such that, for every $(f^i, f_\Gamma^i)\in L^2(0,T; \H)$,
  if $(u_\lambda^i, v_\lambda^i)$ are any respective solution to \eqref{app1}--\eqref{app2}, 
  $i=1,2$, we have
  \[\begin{split}
  \norm{\partial_t(u^1_\lambda-u_\lambda^2)}_{L^2(0,T; H)} &+ \norm{\partial_t(v^1_\lambda-v_\lambda^2)}_{L^2(0,T; H_\Gamma)}+
  \norm{u_\lambda^1-u_\lambda^2}_{L^\infty(0,T; V)} + \norm{v^1_\lambda-v_\lambda^2}_{L^\infty(0,T; V_{\Gamma})}\\
  &\leq c'_\lambda\left(\norm{f^1-f^2}_{L^2(0,T; H)} + \norm{f_\Gamma^1-f_\Gamma^2}_{L^2(0,T; H_\Gamma)}\right)\,.
  \end{split}\]
\end{lem}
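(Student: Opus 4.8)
The plan is to prove both assertions by testing the approximated equation \eqref{app1}, respectively its ``difference version'', with the time derivative $\partial_t(u_\lambda,v_\lambda)$ in $\H$, exploiting systematically the structural properties of $A_\lambda$ and $B_\lambda$ collected in Lemma~\ref{prop}. Throughout, $\lambda$ is fixed and every constant is allowed to depend on it (as well as on $\eps$, $T$, the data $g,g_\Gamma,u_0$ and the Lipschitz constants $C_\pi,C_{\pi_\Gamma}$), but never on the source $(f,f_\Gamma)$.

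\textbf{First estimate.} Pairing \eqref{app1} with $\partial_t(u_\lambda,v_\lambda)\in L^2(0,T;\H)$ and integrating in time, the term coming from $A_\lambda$ is bounded below by $\lambda\int_0^t\norm{\partial_t(u_\lambda,v_\lambda)}_\H^2$ thanks to Lemma~\ref{prop}(i), while the term coming from $B_\lambda$ equals $\psi_\lambda(u_\lambda(t),v_\lambda(t))-\psi_\lambda(u_0,u_{0|\Gamma})$ by Lemma~\ref{prop}(iii) and the classical chain rule for subdifferentials along $H^1(0,T;\H)$ curves --- this is legitimate because the abstract existence result used above guarantees $B_\lambda(u_\lambda,v_\lambda)\in L^2(0,T;\H)$. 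One then estimates the right-hand side by the Young inequality, absorbing half of the $A_\lambda$ term; the point here is that $\norm{(g_\lambda,g_{\Gamma\lambda})-(T_\lambda\pi(f),T_\lambda\pi_\Gamma(f_\Gamma))}_{L^2(0,T;\H)}$ is bounded independently of $(f,f_\Gamma)$, since $T_\lambda\pi$ and $T_\lambda\pi_\Gamma$ take values in $[-1/\lambda,1/\lambda]$, while $\psi_\lambda(u_0,u_{0|\Gamma})$ is finite because $(u_0,u_{0|\Gamma})\in\V$ and $\widehat\beta_\lambda(u_0)\in L^1(\Omega)$, $\widehat\beta_{\Gamma\lambda}(u_{0|\Gamma})\in L^1(\Gamma)$ by \eqref{u0}. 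This yields a bound for $\norm{\partial_t(u_\lambda,v_\lambda)}_{L^2(0,T;\H)}$ and for $\sup_{t\in[0,T]}\psi_\lambda(u_\lambda(t),v_\lambda(t))$; since $\norm{(u_\lambda,v_\lambda)(t)}_\H\le\norm{(u_0,u_{0|\Gamma})}_\H+T^{1/2}\norm{\partial_t(u_\lambda,v_\lambda)}_{L^2(0,T;\H)}$ is also bounded, the coercivity estimate Lemma~\ref{prop}(iv) finally upgrades the control of $\psi_\lambda(u_\lambda(t),v_\lambda(t))$ to the desired $L^\infty(0,T;\V)$ bound.

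\textbf{Continuous dependence and uniqueness.} Let $(u_\lambda^i,v_\lambda^i)$ solve \eqref{app1}--\eqref{app2} with source $(f^i,f_\Gamma^i)$, $i=1,2$, and set $(\bar u,\bar v):=(u_\lambda^1-u_\lambda^2,v_\lambda^1-v_\lambda^2)$, so that $(\bar u,\bar v)(0)=(0,0)$. Subtracting the two equations and testing with $\partial_t(\bar u,\bar v)$, the contribution of $A_\lambda$ is bounded below by $\lambda\int_0^t\norm{\partial_t(\bar u,\bar v)}_\H^2$, using the monotonicity of $\alpha_\lambda$, $\alpha_{\Gamma\lambda}$ and $G_\lambda^{-1}$ together with the $\lambda$-multiple-of-the-identity part of $A_\lambda$. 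Since $B_\lambda$ is nonlinear, $(B_\lambda(u_\lambda^1,v_\lambda^1)-B_\lambda(u_\lambda^2,v_\lambda^2),\partial_t(\bar u,\bar v))_\H$ is not an exact time derivative; we therefore split $B_\lambda=B_\lambda^{\mathrm{lin}}+B_\lambda^{\mathrm{mon}}$ with $B_\lambda^{\mathrm{lin}}(x,y)=(\lambda x-\Delta x,\partial_{\bf n}x-\eps\Delta_\Gamma y)$ and $B_\lambda^{\mathrm{mon}}(x,y)=(\beta_\lambda(x),\beta_{\Gamma\lambda}(y))$: the first, being linear self-adjoint and nonnegative, produces the exact derivative $\frac{d}{dt}\widetilde\psi_\lambda(\bar u,\bar v)$ of the quadratic part $\widetilde\psi_\lambda$ of $\psi_\lambda$, whereas the second is treated as a perturbation via $\norm{B_\lambda^{\mathrm{mon}}(u_\lambda^1,v_\lambda^1)-B_\lambda^{\mathrm{mon}}(u_\lambda^2,v_\lambda^2)}_\H\le C_\lambda\norm{(\bar u,\bar v)}_\H$ (Lipschitz continuity of $\beta_\lambda,\beta_{\Gamma\lambda}$) and the Young inequality. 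The right-hand side is handled likewise, using that $T_\lambda\pi$, $T_\lambda\pi_\Gamma$ are Lipschitz so that it is bounded by $\big(C_\pi\norm{f^1-f^2}_H+C_{\pi_\Gamma}\norm{f_\Gamma^1-f_\Gamma^2}_{H_\Gamma}\big)\norm{\partial_t(\bar u,\bar v)}_\H$, again absorbed. After integration on $(0,t)$, exploiting $(\bar u,\bar v)(0)=(0,0)$ (which kills $\widetilde\psi_\lambda$ at $t=0$) and the elementary inequality $\norm{(\bar u,\bar v)(t)}_\H^2\le t\int_0^t\norm{\partial_t(\bar u,\bar v)}_\H^2$, one closes the argument by the Gronwall lemma applied to $t\mapsto\int_0^t\norm{\partial_t(\bar u,\bar v)}_\H^2$, obtaining the bound on $\norm{\partial_t(\bar u,\bar v)}_{L^2(0,T;\H)}$; plugging this back and using the coercivity of $\widetilde\psi_\lambda$ on $\V$ (same trace argument as in Lemma~\ref{prop}(iv), recalling $\bar u=\bar v$ on $\Gamma$) yields the $L^\infty(0,T;\V)$ estimate for $(\bar u,\bar v)$. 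Choosing $f^1=f^2$ gives at once the uniqueness claim.

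\textbf{Main difficulty.} The computations themselves are routine; the only genuinely delicate point is the treatment of $B_\lambda$ in the continuous-dependence estimate: unlike in the single-solution bound, the $B_\lambda$-term tested against $\partial_t(\bar u,\bar v)$ is not a total derivative because $\beta_\lambda,\beta_{\Gamma\lambda}$ are nonlinear, so one must isolate the linear part of $B_\lambda$ --- which alone yields an exact derivative and the needed $\V$-coercivity --- and absorb the residual Lipschitz monotone contribution into a Gronwall argument, crucially exploiting the vanishing initial condition for the difference and a Poincar\'e-in-time estimate. One must also keep track that all chain-rule manipulations are carried out at the level of $\H$, which is licit precisely because the abstract result supplies $B_\lambda(u_\lambda,v_\lambda)\in L^2(0,T;\H)$ along solutions.
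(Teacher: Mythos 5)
Your proposal is correct and follows essentially the same route as the paper: test with $\partial_t(u_\lambda,v_\lambda)$ (resp.\ the difference of time derivatives), use the monotonicity of $\alpha_\lambda$, $\alpha_{\Gamma\lambda}$, $G_\lambda^{-1}$ to keep the $\lambda$-coercive part of $A_\lambda$, exploit the truncation $|T_\lambda|\le 1/\lambda$ so the bound is independent of $(f,f_\Gamma)$, let the linear part of $B_\lambda$ produce exact time derivatives while the $(\beta_\lambda,\beta_{\Gamma\lambda})$ part is absorbed as a Lipschitz perturbation via Young, and close with Gronwall using the vanishing initial difference. The only differences are presentational (you argue abstractly through $\psi_\lambda$ and Lemma~\ref{prop}, the paper writes the integrals out explicitly), so nothing further is needed.
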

\begin{proof}
  Testing \eqref{app1} by $\partial_t(u_\lambda, v_\lambda)$ and integrating on $(0,t)$, 
  thanks to the monotonicity of the operators $\alpha_\lambda$, $\alpha_{\Gamma\lambda}$ and $G_\lambda^{-1}$,
  using the Young inequality and the fact that $|T_\lambda|\leq\frac1\lambda$ we have
  \[\begin{split}
  &\lambda\int_0^t\norm{\partial_tu_\lambda(s)}_H^2\,ds 
  +\frac\lambda2\int_\Omega|u_\lambda(t)|^2
  +\frac12\int_\Omega|\nabla u_\lambda(t)|^2
  +\int_\Omega \widehat\beta_\lambda(u_\lambda(t))\\
  &\qquad+\lambda\int_0^t\norm{\partial_tv_\lambda(s)}_{H_\Gamma}^2\,ds
   + \frac\eps2\int_\Gamma|\nabla_\Gamma v_\lambda(t)|^2
  + \int_\Gamma\widehat\beta_{\Gamma\lambda}(v_\lambda(t))\\
  &\leq \frac\lambda2\norm{u_0}_H^2+ \frac12\norm{\nabla u_0}_H^2 + \frac\eps2\norm{\nabla_\Gamma u_{0|\Gamma}}_{H_\Gamma}^2
  +\int_\Omega\widehat\beta_\lambda(u_0) + \int_\Gamma\widehat\beta_{\Gamma\lambda}(u_{0|\Gamma})\\
  &\qquad+\int_0^t\!\!\int_\Omega \left(g_\lambda(s)-T_\lambda\pi(f(s))\right)\partial_tu_\lambda(s)\,ds + 
  \int_0^t\!\!\int_\Gamma \left(g_{\Gamma\lambda}(s)-T_\lambda\pi_\Gamma(f_\Gamma(s))\right)\partial_t v_\lambda(s)\,ds\\
  &\leq c_{\lambda}\norm{(u_0, u_{0|\Gamma})}_{\V}^2
  +\frac\lambda2\int_0^t\norm{\partial_tu_\lambda(s)}_H^2\,ds +
  \frac\lambda2\int_0^t\norm{\partial_tv_\lambda(s)}_{H_\Gamma}^2\,ds\\
  &\qquad+\frac1\lambda\norm{(g,g_{\Gamma})}^2_{L^2(0,T; \H)} + \frac1{\lambda^2}(|Q|+|\Sigma|)
\end{split}\]
for a certain $c_{\lambda}>0$,
so that rearranging the terms we obtain the first estimate.
Similarly, given $(f^i,f_\Gamma^i)$
and any respective solutions
$(u_\lambda^i, v_\lambda^i)$ to \eqref{app1}--\eqref{app2}, for $i=1,2$, taking the difference of \eqref{app1}
and testing by $\partial_t(u_\lambda^1-u_\lambda^2, v_\lambda^1-v_\lambda^2)$,
using the monotonicity of $\alpha_\lambda$, $\alpha_{\Gamma\lambda}$ and $G_\lambda^{-1}$, 
the Lipschitz continuity of $\beta_\lambda$, $\beta_{\Gamma\lambda}$, $\pi$, $\pi_\Gamma$ and $T_\lambda$,
an easy computation shows that 
\[\begin{split}
  \lambda&\int_0^t\norm{\partial_t(u_\lambda^1-u_\lambda^2)(s)}_H^2\,ds + 
  \lambda\int_0^t\norm{\partial_t(v_\lambda^1-v_\lambda^2)(s)}_{H_\Gamma}^2\,ds\\
  &\quad+\frac\lambda2\int_\Omega|(u_\lambda^1-u_\lambda^2)(t)|^2
  +\frac12\int_\Omega|\nabla(u_\lambda^1-u_\lambda^2)(t)|^2
  + \frac\eps2\int_\Gamma|\nabla(v_\lambda^1-v_\lambda^2)(t)|^2\\
  &\leq\int_0^t\!\!\int_\Omega\left(|\beta_\lambda(u_\lambda^1(s))-\beta_\lambda(u_\lambda^2(s))|+
  |T_\lambda\pi(f^1(s))-T_\lambda\pi(f^2(s))|\right)
  |\partial_t(u_\lambda^1-u_\lambda^2)(s)|\,ds\\
  &\quad+\int_0^t\!\!\int_\Gamma\left(|\beta_{\Gamma\lambda}(v_\lambda^1(s))-
  \beta_{\Gamma\lambda}(v_\lambda^2(s))|+
   |T_\lambda\pi_\Gamma(f_\Gamma^1(s))-T_\lambda\pi_\Gamma(f_\Gamma^2(s))|\right)
  |\partial_t(v_\lambda^1-v_\lambda^2)(s)|\,ds\\
  &\leq\frac1\lambda\int_0^t\!\!\int_\Omega|u^1_\lambda(s)-u_\lambda^2(s)||\partial_t(u_\lambda^1-u_\lambda^2)(s)|\,ds + 
  \frac1\lambda\int_0^t\!\!\int_\Gamma|v^1_\lambda(s)-v_\lambda^2(s)||\partial_t(v_\lambda^1-v_\lambda^2)(s)|\,ds\\
  &\quad+C_\pi\int_0^t\!\!\int_\Omega|f^1(s)-f^2(s)||\partial_t(u_\lambda^1-u_\lambda^2)(s)|\,ds
  +C_{\pi_\Gamma}\int_0^t\!\!\int_\Gamma|f^1_\Gamma(s)-f^2_\Gamma(s)||\partial_t(v_\lambda^1-v_\lambda^2)(s)|\,ds\\
  &\leq\frac\lambda2\int_0^t\norm{\partial_t(u_\lambda^1-u_\lambda^2)(s)}_H^2\,ds + 
  \frac\lambda2\int_0^t\norm{\partial_t(v_\lambda^1-v_\lambda^2)(s)}_{H_\Gamma}^2\,ds
  +\frac1{\lambda^2}\int_0^t\norm{u_\lambda^1(s)-u_\lambda^2(s)}_H^2\,ds\\
  &\quad+\frac1{\lambda^2}\int_0^t\norm{v_\lambda^1(s)-v_\lambda^2(s)}_{H_\Gamma}^2\,ds
  +\frac{C_\pi^2}\lambda\norm{f^1-f^2}^2_{L^2(0,T; H)}
  +\frac{C_{\pi_\Gamma}^2}\lambda\norm{f_\Gamma^1-f_\Gamma^2}^2_{L^2(0,T; H)}\,,
\end{split}\]
and the second inequality follows from the Gronwall lemma.
\end{proof}

Lemma~\ref{lem_app} ensures that, for any $\lambda>0$, it is well-defined the map
\[
  \Theta_\lambda: E_\lambda\to E_\lambda\,, \qquad (f,f_\Gamma)\mapsto (u_\lambda, v_\lambda)\,,
\]
where
\[\begin{split}
  E_\lambda&:=\left\{(x,y)\in H^1(0,T; \H)\cap L^\infty(0,T; \V): \right.\\
  &\qquad\left.\norm{\partial_tx}_{L^2(0,T;H)} + \norm{\partial_ty}_{L^2(0,T; H_\Gamma)}+
  \norm{x}_{L^\infty(0,T; V)} + \norm{y}_{L^\infty(0,T; V_{\Gamma})}\leq c_\lambda\right\}\,.
\end{split}\]
Since $E_\lambda$ is compact and convex in $L^2(0,T; \H)$ and $\Theta_\lambda$ is continuous 
on $L^2(0,T; \H)$ by Lemma~\ref{lem_app}, 
Shauder's fixed point theorem ensures that there is a fixed point $(u_\lambda, v_\lambda)\in E_\lambda$
for $\Theta_\lambda$. It is also clear by the second inequality in the previous lemma and the Gronwall lemma
that $(u_\lambda, v_\lambda)$ is also unique. As it is natural, we set $\mu_\lambda:=-G_\lambda^{-1}\partial_tu_\lambda$.

Let us collect the properties of $(u_\lambda, v_\lambda, \mu_\lambda)$ in the following lemmata.
The first result states precisely the regularities of the approximated solutions
under the weakest assumptions of Theorem~\ref{thm1} on the data, while the second
specifies some additional regularity provided by the strongest hypotheses 
of Theorems~\ref{thm2}--\ref{thm3}.
\begin{lem}\label{prop_app}
  Under the assumptions \eqref{g}--\eqref{u0} we have 
  \begin{gather*}
    u_\lambda \in H^1(0,T; H)\cap L^\infty(0,T; V)\cap L^2(0,T; W)\,,\\
    v_\lambda \in H^1(0,T; H_\Gamma)\cap L^\infty(0,T; V_{\Gamma})
    \cap L^2(0,T; W_\Gamma)\,,\\
    \mu_\lambda \in L^2(0,T; W_{\bf n})
  \end{gather*}
  and 
  \begin{align}
  \label{eq1_app}
  \partial_t u_\lambda + \lambda\mu_\lambda - \Delta\mu_\lambda = 0 \qquad&\text{in } Q\,,\\
  \label{eq2_app}
  \mu_\lambda = \lambda \partial_t u_\lambda + \alpha_\lambda(\partial_t u_\lambda) +\lambda u_\lambda - \Delta u_\lambda
  +\beta_\lambda(u_\lambda) + T_\lambda\pi(u_\lambda) - g_\lambda \qquad&\text{in } Q\,,\\
  u_\lambda=v_\lambda\,, \quad \partial_{\bf n}\mu_\lambda=0 \qquad&\text{in } \Sigma\,,\\
  \label{eq3_app}
  \lambda\partial_t v_\lambda + \alpha_{\Gamma\lambda}(\partial_tv_\lambda) + \partial_{\bf n}u_\lambda
  - \eps\Delta_\Gamma v_\lambda
  +\beta_{\Gamma\lambda}(v_\lambda) + T_\lambda\pi_\Gamma(v_\lambda) = g_{\Gamma\lambda} \qquad&\text{in } \Sigma\,,\\
  \label{init_app}
  u_\lambda(0)=u_0 \qquad&\text{in } \Omega\,,
\end{align}
\end{lem}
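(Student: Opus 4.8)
The plan is to unravel the abstract equation \eqref{app1}--\eqref{app2} componentwise; the only genuinely analytic ingredient will be the elliptic regularity encoded in the structure of $B_\lambda$. First I would recall that the fixed point $(u_\lambda,v_\lambda)$ is exactly the solution delivered by \cite[Thm.~2.1]{colli-visin} with right-hand side $(g_\lambda,g_{\Gamma\lambda})-(T_\lambda\pi(u_\lambda),T_\lambda\pi_\Gamma(v_\lambda))$, so that $(u_\lambda,v_\lambda)\in H^1(0,T;\H)\cap L^\infty(0,T;\V)$ and $A_\lambda\partial_t(u_\lambda,v_\lambda),\,B_\lambda(u_\lambda,v_\lambda)\in L^2(0,T;\H)$; in particular $u_\lambda\in H^1(0,T;H)\cap L^\infty(0,T;V)$ and $v_\lambda\in H^1(0,T;H_\Gamma)\cap L^\infty(0,T;V_\Gamma)$ already. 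Recalling $\mu_\lambda:=-G_\lambda^{-1}\partial_t u_\lambda$ and using that $G_\lambda^{-1}\colon H\to W_{\bf n}$ is linear continuous and $\partial_t u_\lambda\in L^2(0,T;H)$, I get $\mu_\lambda\in L^2(0,T;W_{\bf n})$ and, by the definition of $G_\lambda$, the identity $\lambda\mu_\lambda-\Delta\mu_\lambda=-\partial_t u_\lambda$ a.e.\ in $Q$ with $\partial_{\bf n}\mu_\lambda=0$ a.e.\ in $\Sigma$, i.e.\ \eqref{eq1_app} and the Neumann condition for $\mu_\lambda$.

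Next I would extract the $W$- and $W_\Gamma$-regularity from $B_\lambda(u_\lambda,v_\lambda)\in L^2(0,T;\H)$. Since $B_\lambda=\partial\psi_\lambda$ (Lemma~\ref{prop}) and the effective domain of $\partial\psi_\lambda$ in $\H$ is precisely $\W$, with $\partial\psi_\lambda(x,y)=(\lambda x-\Delta x+\beta_\lambda(x),\ \partial_{\bf n}x-\eps\Delta_\Gamma y+\beta_{\Gamma\lambda}(y))$ --- this being the elliptic regularity result for the problem $\lambda x-\Delta x=f_1$ in $\Omega$, $\partial_{\bf n}x-\eps\Delta_\Gamma y=f_2$ on $\Gamma$, $x=y$ on $\Gamma$, see e.g.\ \cite{col-gil-spr} --- it follows that $(u_\lambda(t),v_\lambda(t))\in\W$ for a.e.\ $t\in(0,T)$. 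To upgrade this to Bochner integrability I would write $f_1:=(B_\lambda(u_\lambda,v_\lambda))_1-\beta_\lambda(u_\lambda)$ and $f_2:=(B_\lambda(u_\lambda,v_\lambda))_2-\beta_{\Gamma\lambda}(v_\lambda)$, which lie in $L^2(0,T;H)$ and $L^2(0,T;H_\Gamma)$ because $\beta_\lambda,\beta_{\Gamma\lambda}$ are Lipschitz with $\beta_\lambda(0)=\beta_{\Gamma\lambda}(0)=0$ and $u_\lambda\in L^\infty(0,T;V)$, $v_\lambda\in L^\infty(0,T;V_\Gamma)$; the accompanying elliptic estimate $\|x\|_W+\|y\|_{W_\Gamma}\leq C(\|f_1\|_H+\|f_2\|_{H_\Gamma}+\|x\|_V+\|y\|_{V_\Gamma})$ applied a.e.\ in time then produces an $L^2(0,T)$ majorant of $t\mapsto\|u_\lambda(t)\|_W+\|v_\lambda(t)\|_{W_\Gamma}$, so $u_\lambda\in L^2(0,T;W)$ and $v_\lambda\in L^2(0,T;W_\Gamma)$; in particular $\partial_{\bf n}u_\lambda$ is well defined in $L^2(0,T;H_\Gamma)$ as the trace of $\nabla u_\lambda$.

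Finally, to obtain the system \eqref{eq1_app}--\eqref{init_app} I would write \eqref{app1} componentwise, insert the explicit expressions of $A_\lambda$ and $B_\lambda$ and use $G_\lambda^{-1}(\partial_t u_\lambda)=-\mu_\lambda$: the first component gives $\mu_\lambda=\lambda\partial_t u_\lambda+\alpha_\lambda(\partial_t u_\lambda)+\lambda u_\lambda-\Delta u_\lambda+\beta_\lambda(u_\lambda)+T_\lambda\pi(u_\lambda)-g_\lambda$ a.e.\ in $Q$, namely \eqref{eq2_app}, and the second component gives \eqref{eq3_app} a.e.\ in $\Sigma$; the coupling $u_\lambda=v_\lambda$ a.e.\ in $\Sigma$ follows from $(u_\lambda(t),v_\lambda(t))\in\W\subseteq\V$, and \eqref{init_app} is \eqref{app2}. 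The main obstacle, and the only step that is not pure bookkeeping, is the elliptic regularity / identification of $D(\partial\psi_\lambda)=\W$ with the stated strong differential operator (this is where the smoothness of $\Omega$, $\Gamma$ and the Laplace--Beltrami coupling are genuinely used); everything else reduces to checking that the Yosida and truncation terms inherit the required integrability from $(u_\lambda,v_\lambda)\in H^1(0,T;\H)\cap L^\infty(0,T;\V)$.
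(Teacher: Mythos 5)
Your proposal is correct and follows essentially the same route as the paper: unravel the fixed-point solution of \eqref{app1}--\eqref{app2} componentwise, obtain $\mu_\lambda\in L^2(0,T;W_{\bf n})$ from $\mu_\lambda=-G_\lambda^{-1}\partial_t u_\lambda$, and recover the $L^2(0,T;W)$ and $L^2(0,T;W_\Gamma)$ regularity from $B_\lambda(u_\lambda,v_\lambda)\in L^2(0,T;\H)$ after removing the Lipschitz terms $\beta_\lambda(u_\lambda)$, $\beta_{\Gamma\lambda}(v_\lambda)$ and applying elliptic regularity with a quantitative estimate a.e.\ in time. The only (cosmetic) difference is that the paper spells out the elliptic step as a bootstrap ($u_\lambda\in L^2(0,T;H^{3/2}(\Omega))$, hence $\partial_{\bf n}u_\lambda\in L^2(0,T;H_\Gamma)$, then by comparison $\Delta_\Gamma v_\lambda\in L^2(0,T;H_\Gamma)$, whence $v_\lambda\in L^2(0,T;W_\Gamma)$ and finally $u_\lambda\in L^2(0,T;W)$), whereas you invoke the coupled elliptic estimate in one shot.
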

\begin{proof}
  Thanks to classical elliptic regularity results
  (see \cite[Thm.~3.2]{brezzi-gilardi}),
  the regularities of the approximated solutions
  $u_\lambda$ and $v_\lambda$ easily follow from the fact that $(u_\lambda,v_\lambda) \in E_\lambda$ and
  $B_\lambda(u_\lambda, v_\lambda)\in L^2(0,T; \H)$. Indeed, from this last condition it follows that 
  $\Delta u_\lambda \in L^2(0,T; H)$ and $\partial_{\bf n}u_\lambda -\eps\Delta v_\lambda \in L^2(0,T; H_\Gamma)$.
  The conditions 
  $u_\lambda \in L^\infty(0,T; V)$, $\Delta u_\lambda \in L^2(0,T; H)$ and $v_\lambda \in L^\infty(0,T; V_\Gamma)$ imply 
  that $u_\lambda \in L^2(0,T; H^{3/2}(\Omega))$, hence also $\partial_{\bf n}u_\lambda \in L^2(0,T; H_\Gamma)$.
  It follows then by comparison that $\Delta_\Gamma v_\lambda \in L^2(0,T; H_\Gamma)$, from which $v_\lambda\in 
  L^2(0,T; W_\Gamma)$
  and also $u_\lambda \in L^2(0,T; W)$. Finally, the regularity of $\mu$ is straightforward from the definition of
  $G_\lambda$, and \eqref{eq1_app}--\eqref{init_app} follow from the definition of $\Theta_\lambda$ itself.
\end{proof}

\begin{lem}
  \label{prop_app2}
  Under the further assumptions \eqref{g'}--\eqref{u0'_bis} we also have
  \begin{gather*}
    u_\lambda \in H^1(0,T; V)\cap L^2(0,T; H^3(\Omega))\cap C^0([0,T]; W)\cap C^1([0,T]; H)\,,\\
    v_\lambda \in H^1(0,T; V_{\Gamma\eps})\cap L^2(0,T; H^{3}(\Gamma))\cap C^0([0,T]; W_\Gamma)\cap C^1([0,T]; H_\Gamma)\,,\\
    \mu_\lambda \in L^2(0,T; H^3(\Omega))\cap C^0([0,T]; W_{\bf n})\,.
  \end{gather*}
\end{lem}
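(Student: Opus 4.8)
The plan is to obtain the extra regularity by differentiating the approximated system \eqref{eq1_app}--\eqref{eq3_app} in time and running an energy estimate on the resulting equations. Since $\lambda>0$ is fixed, all the nonlinearities $\alpha_\lambda,\alpha_{\Gamma\lambda},\beta_\lambda,\beta_{\Gamma\lambda},T_\lambda\pi,T_\lambda\pi_\Gamma$ are globally Lipschitz, so once $u_\lambda\in H^1(0,T;V)$ and $v_\lambda\in H^1(0,T;V_\Gamma)$ are established the remaining spatial and temporal regularities will follow by elliptic regularity and interpolation. As customary, the time differentiation is made rigorous by working with the incremental ratios $t\mapsto h^{-1}\big[(u_\lambda,v_\lambda)(t+h)-(u_\lambda,v_\lambda)(t)\big]$ and letting $h\to0$; here I argue directly on the differentiated system for brevity.

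First I would identify the initial velocity. Evaluating \eqref{eq2_app}--\eqref{eq3_app} at $t=0$ and recalling $\mu_\lambda=-G_\lambda^{-1}\partial_t u_\lambda$, one checks that $(\partial_t u_\lambda(0),\partial_t v_\lambda(0))$ solves
\[
  A_\lambda\big(\partial_t u_\lambda(0),\partial_t v_\lambda(0)\big)
  =\big(g_\lambda(0),g_{\Gamma\lambda}(0)\big)-\big(T_\lambda\pi(u_0),T_\lambda\pi_\Gamma(u_{0|\Gamma})\big)-B_\lambda(u_0,u_{0|\Gamma})\,.
\]
By \eqref{g'_bis} and \eqref{u0'} the right-hand side belongs to $\H$ (indeed $-\Delta u_0\in H$, $\partial_{\bf n}u_0\in H^{1/2}(\Gamma)\embed H_\Gamma$, $-\eps\Delta_\Gamma u_{0|\Gamma}\in H_\Gamma$, while $g_\lambda(0)=(I-\lambda\Delta_{\bf n})^{-1}g(0)\in W_{\bf n}$ and $g_{\Gamma\lambda}(0)=(I-\lambda\Delta_\Gamma)^{-1}g_\Gamma(0)\in W_\Gamma$), and since $A_\lambda$ is maximal monotone and coercive on $\H$ by Lemma~\ref{prop}, hence invertible, this gives $(\partial_t u_\lambda(0),\partial_t v_\lambda(0))\in\H$.

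Then I would test the time-differentiated \eqref{eq2_app} by $\partial_t u_\lambda$ in $H$ and the time-differentiated \eqref{eq3_app} by $\partial_t v_\lambda$ in $H_\Gamma$, sum, and use $\partial_t\mu_\lambda=-G_\lambda^{-1}\partial_t u_\lambda$ together with the self-adjointness of $G_\lambda^{-1}$; after integration on $(0,t)$ this produces on the left-hand side the nonnegative quantities $\tfrac\lambda2\|\partial_t u_\lambda(t)\|_H^2$, $\tfrac\lambda2\|\partial_t v_\lambda(t)\|_{H_\Gamma}^2$, $\int_0^t\|\nabla\partial_t u_\lambda\|_H^2$, $\eps\int_0^t\|\nabla_\Gamma\partial_t v_\lambda\|_{H_\Gamma}^2$ and $\int_0^t(G_\lambda^{-1}\partial_t u_\lambda,\partial_t u_\lambda)_H$. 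The $\beta_\lambda,\beta_{\Gamma\lambda}$ contributions, namely $\int_0^t\!\int_\Omega\beta_\lambda'(u_\lambda)|\partial_t u_\lambda|^2$ and its boundary analogue, are nonnegative by monotonicity; the $\pi,\pi_\Gamma$ contributions are bounded by $C_\pi\|\partial_t u_\lambda\|_{L^2(0,T;H)}^2+C_{\pi_\Gamma}\|\partial_t v_\lambda\|_{L^2(0,T;H_\Gamma)}^2$, finite by Lemma~\ref{prop_app}; and the source terms are estimated using $g_\lambda\in H^1(0,T;V)$ and $g_{\Gamma\lambda}\in H^1(0,T;V_\Gamma)$, which follow from \eqref{g'}. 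The delicate terms are the ones carrying $\partial_t[\alpha_\lambda(\partial_t u_\lambda)]$ and $\partial_t[\alpha_{\Gamma\lambda}(\partial_t v_\lambda)]$, which would seemingly require control of $\partial_{tt}u_\lambda,\partial_{tt}v_\lambda$; for these I would invoke the identity
\[
  \int_0^t\!\!\int_\Omega\partial_s\big[\alpha_\lambda(\partial_s u_\lambda)\big]\,\partial_s u_\lambda
  =\int_\Omega\!\big(\alpha_\lambda(\partial_t u_\lambda)\,\partial_t u_\lambda-\widehat\alpha_\lambda(\partial_t u_\lambda)\big)(t)
  -\int_\Omega\!\big(\alpha_\lambda(\partial_t u_\lambda)\,\partial_t u_\lambda-\widehat\alpha_\lambda(\partial_t u_\lambda)\big)(0)
\]
together with the elementary convexity inequality $r\,\alpha_\lambda(r)\ge\widehat\alpha_\lambda(r)\ge0$ (valid since $\widehat\alpha_\lambda(0)=0$): the time-$t$ term is then nonnegative, while the time-$0$ term is bounded by $\lambda^{-1}\|\partial_t u_\lambda(0)\|_H^2<\infty$ by the previous step, and similarly on $\Gamma$. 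Collecting everything — no Gronwall lemma is even needed — gives $u_\lambda\in H^1(0,T;V)$ and $v_\lambda\in H^1(0,T;V_\Gamma)$. I expect this handling of the viscosity terms $\alpha_\lambda(\partial_t u_\lambda),\alpha_{\Gamma\lambda}(\partial_t v_\lambda)$, i.e.\ testing by the first rather than the second time derivatives and invoking $r\,\alpha_\lambda(r)\ge\widehat\alpha_\lambda(r)\ge0$ so that only $(\partial_t u_\lambda,\partial_t v_\lambda)(0)\in\H$ is needed, to be the main obstacle, along with the identification of that initial velocity through the coercive maximal monotone operator $A_\lambda$.

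Finally I would bootstrap. From $\partial_t u_\lambda\in L^2(0,T;V)$ and elliptic regularity for $G_\lambda$ one gets $\mu_\lambda=-G_\lambda^{-1}\partial_t u_\lambda\in L^2(0,T;W_{\bf n}\cap H^3(\Omega))$ and likewise $\partial_t\mu_\lambda\in L^2(0,T;W_{\bf n}\cap H^3(\Omega))$, whence $\mu_\lambda\in C^0([0,T];W_{\bf n})$; moreover \eqref{eq1_app} differentiated gives $\partial_{tt}u_\lambda\in L^2(0,T;V)$, so $\partial_t u_\lambda\in C^0([0,T];V)$ and $u_\lambda\in C^1([0,T];H)$. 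Solving \eqref{eq2_app} for $-\Delta u_\lambda$ shows $\Delta u_\lambda\in L^2(0,T;V)$ (each summand lying in $L^2(0,T;V)$, Lipschitz maps preserving $H^1$); since $u_\lambda\in L^2(0,T;W)$ by Lemma~\ref{prop_app}, $\partial_{\bf n}u_\lambda\in L^2(0,T;H^{1/2}(\Gamma))$, and a comparison in \eqref{eq3_app} yields $\Delta_\Gamma v_\lambda\in L^2(0,T;H^{1/2}(\Gamma))$, i.e.\ $v_\lambda\in L^2(0,T;H^{5/2}(\Gamma))$; elliptic regularity for the Dirichlet problem $-\Delta u_\lambda=f\in L^2(0,T;V)$ with datum $u_{\lambda|\Gamma}=v_\lambda$ then gives $u_\lambda\in L^2(0,T;H^3(\Omega))$, whence $\partial_{\bf n}u_\lambda\in L^2(0,T;H^{3/2}(\Gamma))$, and a second comparison in \eqref{eq3_app} upgrades the boundary regularity to $v_\lambda\in L^2(0,T;H^3(\Gamma))$. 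The residual continuity-in-time statements ($u_\lambda\in C^0([0,T];W)$, $v_\lambda\in C^0([0,T];W_\Gamma)$, $v_\lambda\in C^1([0,T];H_\Gamma)$) then follow by interpolation from the above together with a last comparison for $\partial_t v_\lambda$ in the time-differentiated \eqref{eq3_app}.
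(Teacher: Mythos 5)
Your strategy diverges from the paper's at the key step: the paper obtains $u_\lambda\in H^1(0,T;V)$, $v_\lambda\in H^1(0,T;V_\Gamma)$ in one stroke by invoking the abstract regularity result \cite[Thm.~2.2]{colli-visin}, whose hypotheses are exactly the conditions $(v)$--$(vii)$ of Lemma~\ref{prop} together with \eqref{g'}--\eqref{u0'_bis}, and only then bootstraps by elliptic regularity; you instead re-derive that step by a formal energy estimate on the time-differentiated system. That route is not absurd (it is in essence the proof of the cited theorem, and your convexity trick $r\,\alpha_\lambda(r)-\widehat\alpha_\lambda(r)=\widehat{\alpha_\lambda^{-1}}(\alpha_\lambda(r))\ge0$ is the same device the paper uses later in Section~\ref{second'}), but as written it has a genuine gap at the initial time: before the lemma is proved, $\partial_t u_\lambda$ is merely an $L^2$-in-time function, so "$\partial_t u_\lambda(0)$" has no meaning and you cannot assert that it solves the system at $t=0$; in the honest difference-quotient (or time-discrete) version the initial contribution is an average over $(0,h)$ of $\norm{\partial_t u_\lambda}_H^2$-type quantities, and controlling it by the solution of the auxiliary elliptic system at $t=0$ requires a further monotonicity/comparison argument -- precisely the technical content of \cite[Thm.~2.2]{colli-visin} (and of the paper's Lemma~\ref{init_reg} combined with the discrete scheme), which cannot be dismissed as "incremental ratios for brevity".

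A second step fails outright. In your bootstrap you claim $\partial_t\mu_\lambda\in L^2(0,T;W_{\bf n}\cap H^3(\Omega))$ and, by differentiating \eqref{eq1_app}, $\partial_{tt}u_\lambda\in L^2(0,T;V)$, in order to conclude $\mu_\lambda\in C^0([0,T];W_{\bf n})$ and $u_\lambda\in C^1([0,T];H)$. But $\partial_t\mu_\lambda=-G_\lambda^{-1}\partial_{tt}u_\lambda$ (note also the typo $\partial_t\mu_\lambda=-G_\lambda^{-1}\partial_t u_\lambda$ earlier), so each of these two claims presupposes the other, and no estimate on second time derivatives has been, or can be, obtained from your energy inequality; indeed the lemma itself asserts only $C^1([0,T];H)$, not any bound on $\partial_{tt}u_\lambda$. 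The paper's argument avoids this: from $u_\lambda\in L^2(0,T;H^3(\Omega))\cap H^1(0,T;V)\embed C^0([0,T];W)$ and $v_\lambda\in L^2(0,T;H^3(\Gamma))\cap H^1(0,T;V_\Gamma)\embed C^0([0,T];W_\Gamma)$ one sees that the right-hand sides of the \emph{non-differentiated} equations, $z_\lambda:=g_\lambda-\lambda u_\lambda+\Delta u_\lambda-\beta_\lambda(u_\lambda)-T_\lambda\pi(u_\lambda)$ and $w_\lambda:=g_{\Gamma\lambda}-\partial_{\bf n}u_\lambda-\beta_{\Gamma\lambda}(v_\lambda)-T_\lambda\pi_\Gamma(v_\lambda)$, lie in $C^0([0,T];\H)$; since $A_\lambda(\partial_t u_\lambda,\partial_t v_\lambda)=(z_\lambda,w_\lambda)$ and $A_\lambda^{-1}$ is Lipschitz on $\H$, this gives $(\partial_t u_\lambda,\partial_t v_\lambda)\in C^0([0,T];\H)$ and then $\mu_\lambda=-G_\lambda^{-1}\partial_t u_\lambda\in C^0([0,T];W_{\bf n})$. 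The same correction applies to your final "comparison for $\partial_t v_\lambda$ in the time-differentiated \eqref{eq3_app}": it should be a comparison in the non-differentiated equation combined with the bi-Lipschitz inverse of the viscosity operator. Your elliptic bootstrap for $L^2(0,T;H^3)$ regularity of $u_\lambda$, $v_\lambda$, $\mu_\lambda$ and the interpolation to $C^0([0,T];W)$, $C^0([0,T];W_\Gamma)$ is fine and essentially matches the paper.
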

\begin{proof}
  Thanks to conditions $(v)$--$(vii)$ in Lemma~\ref{prop} and the hypotheses \eqref{g'}--\eqref{u0'_bis}, 
  the result \cite[Thm~2.2]{colli-visin} ensures that the range of the function $\Theta_\lambda$ is contained in
  $H^1(0,T; \V)$, hence $u_\lambda \in H^1(0,T;V)$ and $v_\lambda \in H^1(0,T; V_{\Gamma})$.
  Consequently, by comparison in \eqref{eq1_app}, we have $\mu_\lambda \in L^2(0,T; V)$, so that 
  $\mu_\lambda \in L^2(0,T; H^3(\Omega))$ by elliptic regularity. Moreover, by comparison in \eqref{eq2_app}--\eqref{eq3_app},
  thanks to \eqref{g'} and the fact that $\partial_t u_\lambda \in L^2(0,T; V)$ and $\partial_t v_\lambda \in L^2(0,T; V_{\Gamma})$,
  we deduce that $-\Delta u_\lambda\in L^2(0,T; V)$ and $\partial_{\bf n}u_\lambda - \eps\Delta v_\lambda \in 
  L^2(0,T; V_{\Gamma})$. 
  Since we have $\Delta u_\lambda \in L^2(0,T; V)$ and (by Lemma~\ref{prop_app}) $v \in L^2(0,T; W_\Gamma)$, then
  $u_\lambda \in L^2(0,T; H^{5/2}(\Omega))$ and $\partial_{\bf n}u_\lambda\in L^2(0,T; V_\Gamma)$.
  By difference then we deduce that $\Delta_\Gamma v_\lambda \in L^2(0,T; V_\Gamma)$, so that 
  $v_\lambda \in L^2(0,T; H^3(\Gamma))$ by elliptic regularity on the boundary, and consequently
  also $u_\lambda \in L^2(0,T; H^3(\Omega))$. Furthermore,
  we have $u_\lambda \in L^2(0,T; H^3(\Omega))\cap H^1(0,T; V)\embed C^0([0,T]; W)$
  and $v_\lambda \in L^2(0,T; H^3(\Gamma))\cap H^1(0,T; V_{\Gamma})\embed C^0([0,T]; W_\Gamma)$;
  in particular, we deduce that $\partial_{\bf n}u_\lambda \in C^0([0,T]; H^{1/2}(\Gamma))$. Hence, setting
  $z_\lambda:=g_\lambda-\lambda u_\lambda + \Delta u_\lambda - \beta_\lambda(u_\lambda) - T_\lambda\pi(u_\lambda)$
  and $w_\lambda:=g_{\Gamma\lambda}
  -\partial_{\bf n}u_\lambda - \beta_{\Gamma\lambda}(v_\lambda)-T_\lambda\pi_\Gamma(v_\lambda)$,
  from \eqref{eq1_app}--\eqref{eq3_app}
  we have that $A_\lambda(\partial_t u_\lambda, \partial_t v_\lambda)=(z_\lambda,w_\lambda)\in C^0([0,T];\H)$:
  since $A_\lambda^{-1}:\H\to\H$ is Lipschitz continuous, we infer that 
  $u_\lambda \in C^1([0,T]; H)$ and $v_\lambda \in C^1([0,T]; H_\Gamma)$, hence also $\mu_\lambda \in C^0([0,T]; W_{\bf n})$
  from \eqref{eq1_app}.
\end{proof}


\section{The first existence result}
\setcounter{equation}{0}
\label{proof1}

We present here the proof of the first main result.
Recall that here we are working under the assumptions \eqref{g}--\eqref{coerc1},
so that the regularity of the approximated solutions is the one specified in Lemma~\ref{prop_app}.
Since the passage to the limit will consist in letting $\lambda\searrow0$,
it is not restrictive to consider $\lambda\in(0,1)$ for example.

\subsection{The first estimate}\label{first}
Testing \eqref{eq1_app} by $\mu_\lambda$, \eqref{eq2_app} by $\partial_t u_\lambda$ and
taking the difference, by integration by parts we have that, for every $t\in(0,T)$, 
\[
  \begin{split}
    &\lambda\int_{Q_t}|\mu_\lambda|^2 + \int_{Q_t}|\nabla\mu_\lambda|^2
    +\lambda\int_{Q_t}|\partial_t u_\lambda|^2 + \int_{Q_t}\alpha_\lambda(\partial_t u_\lambda)\partial_t u_\lambda
    +\frac\lambda2\int_\Omega|u_\lambda(t)|^2 + \frac12\int_\Omega|\nabla u_\lambda(t)|^2\\
    &\qquad+\lambda\int_{\Sigma_t}|\partial_t v_\lambda|^2 
    + \int_{\Sigma_t}\alpha_{\Gamma\lambda}(\partial_t v_\lambda)\partial_t v_\lambda
    +\frac\eps2\int_\Sigma|\nabla_\Gamma v_\lambda(t)|^2 + \int_\Omega\widehat\beta_\lambda(u_\lambda(t))
    +\int_\Sigma\widehat\beta_{\Gamma\lambda}(v_\lambda(t))\\
    &=\frac\lambda2\int_\Omega|u_0|^2 + \frac12\int_\Omega|\nabla u_0|^2 + 
    \frac\eps2\int_\Sigma|\nabla_\Gamma u_{0|\Gamma}|^2
    +\int_\Omega\widehat\beta_\lambda(u_0) + \int_\Sigma\widehat\beta_{\Gamma\lambda}(u_{0|\Gamma})\\
    &\qquad+\int_{Q_t}\left(g_\lambda-T_\lambda\pi(u_\lambda)\right)\partial_t u_\lambda
    +\int_{\Sigma_t}\left(g_{\Gamma\lambda}-T_\lambda\pi_\Gamma(v_\lambda)\right)\partial_t v_\lambda\,.
  \end{split}
\]
Now, let $J_\lambda:=(I+\lambda\alpha)^{-1}:\erre\to\erre$
and $J_{\Gamma_\lambda}:=(I+\lambda\alpha_\Gamma)^{-1}:\erre\to\erre$ denote the resolvents of $\alpha$
and $\alpha_\Gamma$, respectively.
By elementary properties of maximal monotone graphs it 
is well known that $J_\lambda$ and $J_{\Gamma\lambda}$ are
contractions on $\erre$, and that $\alpha_\lambda(\cdot) \in \alpha(J_\lambda(\cdot))$ and 
$\alpha_{\Gamma\lambda}(\cdot)\in\alpha_\Gamma(J_{\Gamma\lambda}\cdot)$:
consequently, by the coercivity assumptions \eqref{coerc1} and \eqref{coerc2} we deduce that 
\begin{gather*}
  \alpha_{\lambda}(\partial_t u_\lambda)\partial_tu_\lambda =
  \alpha_{\lambda}(\partial_t u_\lambda)J_{\lambda}\partial_tu_\lambda
  +\lambda|\alpha_{\lambda}(\partial_t u_\lambda)|^2\geq
  a_1|J_{J_\lambda}\partial_t u_\lambda|^2 - a_2 + 
  \lambda|\alpha_{\lambda}(\partial_t u_\lambda)|^2\,,\\
  \alpha_{\Gamma\lambda}(\partial_t v_\lambda)\partial_tv_\lambda =
  \alpha_{\Gamma\lambda}(\partial_t v_\lambda)J_{\Gamma\lambda}\partial_tv_\lambda
  +\lambda|\alpha_{\Gamma\lambda}(\partial_t v_\lambda)|^2\geq
  b_1|J_{\Gamma\lambda}\partial_t v_\lambda|^2 - b_2 + 
  \lambda|\alpha_{\Gamma\lambda}(\partial_t v_\lambda)|^2\,.
\end{gather*}
Taking into account these relations, the left-hand side of the last inequality is bounded from below by
\[
  \begin{split}
    &\lambda\int_{Q_t}|\mu_\lambda|^2 + \int_{Q_t}|\nabla\mu_\lambda|^2
    +\lambda\int_{Q_t}|\partial_t u_\lambda|^2 + a_1\int_{Q_t}|J_{\lambda}\partial_t u_\lambda|^2
    +\lambda\int_{Q_t}|\alpha_{\lambda}(\partial_t u_\lambda)|^2\\
    &\qquad+\frac\lambda2\int_\Omega|u_\lambda(t)|^2 + \frac12\int_\Omega|\nabla u_\lambda(t)|^2
    +\lambda\int_{\Sigma_t}|\partial_t v_\lambda|^2 
    + b_1\int_{\Sigma_t}|J_{\Gamma\lambda}\partial_t v_\lambda|^2
    +\lambda\int_{\Sigma_t}|\alpha_{\Gamma\lambda}(\partial_t v_\lambda)|^2\\
    &\qquad+\frac\eps2\int_\Sigma|\nabla_\Gamma v_\lambda(t)|^2 + \int_\Omega\widehat\beta_\lambda(u_\lambda(t))
    +\int_\Sigma\widehat\beta_{\Gamma\lambda}(v_\lambda(t))
  \end{split}
\]
while the right-hand side
can be handled using the Young inequality by
\[
  \begin{split}
    &a_2|Q| + b_2|\Sigma| + 
    \frac12\norm{u_0}_V^2 + \frac\eps2\norm{u_{0|\Gamma}}_{V_{\Gamma}}^2 + \norm{\widehat\beta(u_0)}_{L^1(\Omega)}
    +\norm{\widehat\beta_{\Gamma}(u_{0|\Gamma})}_{L^1(\Gamma)}\\
    &\quad+\int_{Q_t}\left(g_\lambda-T_\lambda\pi(u_\lambda)\right)\partial_t u_\lambda
    +\int_{\Sigma_t}\left(g_{\Gamma\lambda}-T_\lambda\pi(v_\lambda)\right)\partial_t v_\lambda\\
    &\leq a_2|Q| + b_2|\Sigma| + 
    \frac12\norm{u_0}_V^2 + \frac\eps2\norm{u_{0|\Gamma}}_{V_{\Gamma}}^2 + \norm{\widehat\beta(u_0)}_{L^1(\Omega)}
    +\norm{\widehat\beta_{\Gamma}(u_{0|\Gamma})}_{L^1(\Gamma)}
    +\frac\delta2\int_{Q_t}|\partial_t u_\lambda|^2\\
    &\quad+\frac\delta2\int_{\Sigma_t}|\partial_t v_\lambda|^2
    +\frac1{\delta}\norm{g}^2_{L^2(0,T; H)} + \frac1{\delta}\norm{g_\Gamma}^2_{L^2(0,T; H_\Gamma)}
    +\frac{C_\pi^2}{\delta}\int_{Q_t}|u_\lambda|^2+\frac{C_{\pi_\Gamma}^2}{\delta}\int_{\Sigma_t}|v_\lambda|^2
  \end{split}
\]
for every $\delta>0$. Now, by definition of $\alpha_\lambda$ and $\alpha_{\Gamma\lambda}$,
\[
  \frac\delta2\int_{Q_t}|\partial_t u_\lambda|^2 \leq \delta\int_{Q_t}|J_\lambda\partial_t u_\lambda|^2
  +\delta\lambda^2\int_{Q_t}|\alpha_\lambda(\partial_t u_\lambda)|^2
\]
and
\[
  \frac\delta2\int_{\Sigma_t}|\partial_t v_\lambda|^2 \leq \delta\int_{\Sigma_t}|J_{\Gamma\lambda}\partial_t v_\lambda|^2
  +\delta\lambda^2\int_{\Sigma_t}|\alpha_{\Gamma\lambda}(\partial_t v_\lambda)|^2\,.
\]
Let us handle the last two terms on the right hand side.
Testing \eqref{eq1_app} by $\frac1{|\Omega|}$ we easily have
\[
  \partial_t(u_\lambda)_\Omega + \lambda(\mu_\lambda)_\Omega=0\,,
\]
which yields
\beq\label{mean}
  (u_\lambda(t))_\Omega=(u_0)_\Omega -\lambda\int_0^t(\mu_\lambda(s))_\Omega\,ds \quad\forall\,t\in[0,T]\,,\quad
  \forall\,\lambda>0\,.
\eeq
As a consequence, by the Poincar\'e inequality, an easy computation yields
\beq\label{normH}\begin{split}
  \norm{u_\lambda(t)}_H&\leq 
  \norm{u_\lambda(t)- (u_\lambda(t))_\Omega}_H+\norm{(u_0)_\Omega}_H
  +\lambda\int_0^t\norm{(\mu_\lambda(s))_\Omega}_H\,ds\\
  &\leq C\left(\norm{\nabla u_\lambda(t)}_H + \norm{u_0}_H + \lambda\int_0^t\norm{\mu_\lambda(s)}_H\,ds\right)
\end{split}\eeq
for a positive constant $C$ independent of $\lambda$, from which (updating $C$)
\[
  \int_{Q_t}|u_\lambda|^2\leq C\left(\int_{Q_t}|\nabla u_\lambda|^2 + \norm{u_0}_H^2 + \lambda^2\int_{Q_t}|\mu_\lambda|^2\right)\,.
\]
Moreover, by the Poincar\'e inequality on the boundary we also have 
\[
  \int_{\Sigma_t}|v_\lambda|^2 \leq C\int_{\Sigma_t}|\nabla_\Gamma v_\lambda|^2\,.
\]
Taking these considerations into account on the right hand side of the estimate
we obtain
\[
  \begin{split}
    &\lambda\int_{Q_t}|\mu_\lambda|^2 + \int_{Q_t}|\nabla\mu_\lambda|^2
    +\lambda\int_{Q_t}|\partial_t u_\lambda|^2 + a_1\int_{Q_t}|J_{\lambda}\partial_t u_\lambda|^2
    +\lambda\int_{Q_t}|\alpha_{\lambda}(\partial_t u_\lambda)|^2\\
    &\qquad+\frac\lambda2\int_\Omega|u_\lambda(t)|^2 + \frac12\int_\Omega|\nabla u_\lambda(t)|^2
    +\lambda\int_{\Sigma_t}|\partial_t v_\lambda|^2 
    + b_1\int_{\Sigma_t}|J_{\Gamma\lambda}\partial_t v_\lambda|^2
    +\lambda\int_{\Sigma_t}|\alpha_{\Gamma\lambda}(\partial_t v_\lambda)|^2\\
    &\qquad+\frac\eps2\int_\Sigma|\nabla_\Gamma v_\lambda(t)|^2 + \int_\Omega\widehat\beta_\lambda(u_\lambda(t))
    +\int_\Sigma\widehat\beta_{\Gamma\lambda}(v_\lambda(t))\\
   &\leq a_2|Q| + b_2|\Sigma| + 
    C\norm{u_0}_V^2 + \frac\eps2\norm{u_{0|\Gamma}}_{V_{\Gamma}}^2 + \norm{\widehat\beta(u_0)}_{L^1(\Omega)}
    +\norm{\widehat\beta_{\Gamma}(u_{0|\Gamma})}_{L^1(\Gamma)}
    +\delta\int_{Q_t}|J_\lambda\partial_t u_\lambda|^2\\
   &\qquad+\delta\lambda^2\int_{Q_t}|\alpha_\lambda(\partial_t u_\lambda)|^2
   +\delta\int_{\Sigma_t}|J_{\Gamma\lambda}\partial_t v_\lambda|^2
  +\delta\lambda^2\int_{\Sigma_t}|\alpha_{\Gamma\lambda}(\partial_t v_\lambda)|^2\\
  &\qquad+\frac1{\delta}\norm{g}^2_{L^2(0,T; H)} + \frac1{\delta}\norm{g_\Gamma}^2_{L^2(0,T; H_\Gamma)}
  + C_\delta\int_{Q_t}|\nabla u_\lambda|^2 + C_\delta\lambda^2\int_{Q_t}|\mu_\lambda|^2
 \end{split}
\]
where we have updated step by step the constant $C$ independent of $\lambda$
and $C_\delta>0$ depends only on
$\delta$.
Fix now $\delta:=\min\{\frac{a_1}2, \frac{b_1}2, \frac12\}$:
since it is not restrictive to consider $\lambda\in(0,\frac1{2C_\delta}]$, rearranging the terms and 
using the Gronwall lemma yields
\begin{gather}
  \label{est1}
  \norm{\nabla u_\lambda}_{L^\infty(0,T; H)} + \lambda^{1/2}\norm{u_\lambda}_{H^1(0,T; H)\cap L^\infty(0,T; H)} \leq C\,,\\
  \label{est2}
  \eps^{1/2}\norm{v_\lambda}_{L^\infty(0,T; V_{\Gamma})} + \lambda^{1/2}\norm{v_\lambda}_{H^1(0,T; H_\Gamma)} \leq C\,,\\
  \label{est3}
  \norm{J_{\lambda}\partial_tu_\lambda}_{L^2(0,T;H)} + \lambda^{1/2}\norm{\alpha_\lambda(\partial_t u_\lambda)}_{L^2(0,T; H)}
  \leq C\,,\\
  \label{est4}
  \norm{J_{\Gamma\lambda}\partial_tv_\lambda}_{L^2(0,T; H_\Gamma)} 
  + \lambda^{1/2}\norm{\alpha_{\Gamma\lambda}(\partial_t v_\lambda)}_{L^2(0,T; H_\Gamma)}\leq C\,,\\
  \label{est5}
  \lambda^{1/2}\norm{\mu_\lambda}_{L^2(0,T; H)} + \norm{\nabla\mu_\lambda}_{L^2(0,T; H)} \leq C\,,\\
  \label{est6}
  \norm{\widehat\beta_\lambda(u_\lambda)}_{L^\infty(0,T; L^1(\Omega))} + 
  \norm{\widehat\beta_{\Gamma\lambda}(v_\lambda)}_{L^\infty(0,T; L^1(\Gamma))} \leq C\,.
\end{gather}
From estimates \eqref{est1}, \eqref{est5}, condition \eqref{normH} and 
equation \eqref{eq1_app}, it follows that
\beq
  \label{est1'}
  \norm{u_\lambda}_{L^\infty(0,T; V)} + \norm{u_\lambda}_{H^1(0,T; V^*)}\leq C\,.
\eeq
Moreover, from \eqref{est3}, \eqref{est5} and the fact that 
$\partial_t u_\lambda=
\lambda\alpha_\lambda(\partial_t u_\lambda)+J_\lambda\partial_t u_\lambda$
(by definition of Yosida approximation), 
by comparison in \eqref{eq1_app} we have
\beq\label{est5'}
  \norm{\Delta \mu_\lambda}_{L^2(0,T; H)}\leq C\,.
\eeq
Finally, \eqref{alpha_sub} and \eqref{est3}--\eqref{est4} ensure that
\beq
  \label{est7}
  \norm{\alpha_\lambda(\partial_t u_\lambda)}_{L^2(0,T; H)} + 
  \norm{\alpha_{\Gamma\lambda}(\partial_tv_\lambda)}_{L^2(0,T; H_\Gamma)}\leq C\,.
\eeq

\subsection{The second estimate}\label{second}
We show here an additional estimate for $\mu_\lambda$ in the space $L^2(0,T; W_{\bf n})$. 
By \eqref{est5}, \eqref{est5'} and \eqref{V_eq}, it is enough to show that $(\mu_\lambda)_\Omega$ is bounded in $L^2(0,T)$ 
uniformly in $\lambda$.
To this end, we are inspired by the computations in \cite{col-gil-spr}.

We test \eqref{eq1_app} by $G_\lambda^{-1}(u_\lambda-(u_\lambda(t))_\Omega)$,
\eqref{eq2_app} by $u_\lambda-(u_\lambda(t))_\Omega$, take the difference, but not integrate in time: we deduce that,
for almost every $t\in(0,T)$,
\[\begin{split}
  &\int_\Omega|\nabla u_\lambda(t)|^2 + 
  \int_\Omega\beta_\lambda(u_\lambda(t))(u_\lambda(t)-(u_\lambda(t))_\Omega)
  +\frac\eps2\int_\Gamma|\nabla_\Gamma v_\lambda(t)|^2\\ &\qquad+ 
  \int_\Gamma\beta_{\Gamma\lambda}(v_\lambda(t))(v_\lambda(t)-(u_\lambda(t))_\Omega)
  =-\int_\Omega\partial_t u_\lambda(t)G_\lambda^{-1}(u_\lambda(t)-(u_\lambda(t))_\Omega)\\
  &\qquad+\int_\Omega\left(g_\lambda(t)-T_\lambda\pi(u_\lambda(t))-\lambda\partial_tu_\lambda(t)-\alpha_\lambda(u_\lambda(t))\right)
  (u_\lambda(t)-(u_\lambda(t))_\Omega)\\
  &\qquad+\int_\Gamma\left(g_{\Gamma\lambda}(t)-T_\lambda\pi_\Gamma(v_\lambda(t))-\lambda\partial_tv_\lambda(t)
  -\alpha_{\Gamma\lambda}(v_\lambda(t))\right)(v_\lambda(t)-(u_\lambda(t))_\Omega)\,.
\end{split}\]
Let us show that the right hand side is bounded in $L^2(0,T)$ uniformly in $\lambda$. It is clear that the last two terms are
bounded in $L^2(0,T)$ by the H\"older inequality and the estimates \eqref{est2}, \eqref{est1'} and \eqref{est7}. 
Moreover, by definition of $G_\lambda^{-1}$ it is immediate to check that
$(G_\lambda^{-1}(y))_\Omega=\frac1\lambda y_\Omega$ for every $y\in H$: hence,
we deduce that $(G_\lambda^{-1}(u_\lambda(t)- (u_\lambda(t))_\Omega))_\Omega=0$ and
by the Poincar\'e inequality we have
\[\begin{split}
  -\int_\Omega\partial_t u_\lambda(t)G_\lambda^{-1}(u_\lambda(t)-(u_0)_\Omega)
 & \leq\norm{\partial_t u_\lambda(t)}_{V^*}\norm{G_\lambda^{-1}(u_\lambda(t)-(u_\lambda(t))_\Omega)}_V\\
  &\leq C\norm{\partial_t u_\lambda(t)}_{V^*}\norm{\nabla G_\lambda^{-1}(u_\lambda(t)-(u_\lambda(t))_\Omega)}_H
\end{split}\]
for a positive constant $C$.
Now, for any $y\in H$ with $y_\Omega=0$, setting $y_\lambda:=G_\lambda^{-1}(y)\in W_{\bf n}$, we have
$\lambda y_\lambda - \Delta y_\lambda = y$, so that testing by $y_\lambda$ we infer that
\[
  \lambda\int_\Omega|y_\lambda|^2 + \int_\Omega|\nabla y_\lambda|^2=\int_\Omega yy_\lambda
  \leq \frac{1}{4\delta}\norm{y}_{V^*}^2 + \delta\norm{y_\lambda}_V^2\,,
\]
for every $\delta>0$, where
$\norm{y_\lambda}^2_V\leq C\norm{\nabla y_\lambda}^2_H$
for a positive constant $C$. 
Choosing $\delta=\frac1{2C}$ yields
\[
  \lambda\norm{G_\lambda^{-1}(y)}_H^2 + \norm{\nabla G_\lambda^{-1}(y)}_H^2\leq C\norm{y}_{V^*}^2
  \qquad\forall\,y\in H:\; y_\Omega=0\,,
\]
so that going back to the last inequality we have
\[
  -\int_\Omega\partial_t u_\lambda(t)G_\lambda^{-1}(u_\lambda(t)-(u_0)_\Omega)\leq
  C\norm{\partial_t u_\lambda(t)}_{V^*}\norm{u_\lambda(t)}_{V^*}\,.
\]
By \eqref{est1'} we deduce that also this last term is bounded in $L^2(0,T)$.

Now, by assumption \eqref{u0_mean} we know that $(u_0)_\Omega$ belongs to the interior of $D(\beta_\Gamma)$
(hence, also of $D(\beta)$ by \eqref{dom1}). This implies that there are two 
constants $k'_0, k_0''>0$ (depending only on $(u_0)_\Omega$) such that
\[
  \beta_\lambda(r)(r-(u_0)_\Omega)\geq k_0'|\beta_\lambda(r)| - k_0''\,, \quad
   \beta_{\Gamma\lambda}(r)(r-(u_0)_\Omega)\geq k_0'|\beta_\lambda(r)| - k_0' \qquad
   \forall\,r\in\erre
\]
(see for example \cite[p.~984]{col-gil-spr}, \cite[p.~908]{gil-mir-sch} and \cite[Prop.~A.1]{mir-zel}).
Moreover, note that by \eqref{mean} and \eqref{est5} we have
\[
  |(u_\lambda(t))_\Omega - (u_0)_\Omega| \leq \lambda \int_0^t|(\mu_\lambda(s))_\Omega|\,ds
  \leq C\lambda^{1/2} \qquad\forall\,t\in[0,T]\,.
\]
Consequently, we have 
\[\begin{split}
  \int_\Omega\beta_\lambda(u_\lambda(t))(u_\lambda(t)-(u_\lambda(t))_\Omega)&=
  \int_\Omega\beta_\lambda(u_\lambda(t))(u_\lambda(t)-(u_0)_\Omega)
  +\int_\Omega\beta_\lambda(u_\lambda(t))((u_0)_\Omega - (u_\lambda(t))_\Omega)\\&\geq
  k_0'\int_\Omega|\beta_\lambda(u_\lambda(t))| - k_0''|\Omega| - C\lambda^{1/2}\int_\Omega|\beta_\lambda(u_\lambda(t))|
\end{split}\]
and similarly
\[
  \int_\Gamma\beta_{\Gamma\lambda}(v_\lambda(t))(v_\lambda(t)-(u_\lambda(t))_\Omega)
  \geq
  k_0'\int_\Gamma|\beta_{\Gamma\lambda}(v_\lambda(t))| - k_0''|\Gamma| - C\lambda^{1/2}\int_\Gamma|\beta_{\Gamma\lambda}(v_\lambda(t))|
\]
Putting this information together, we deduce that 
\[
  \norm{\beta_\lambda(u_\lambda)}_{L^2(0,T; L^1(\Omega))} + 
  \norm{\beta_{\Gamma\lambda}(v_\lambda)}_{L^2(0,T; L^1(\Gamma))} \leq C\,. 
\]
Hence, testing \eqref{eq2_app} by $\pm1$ we have
\[\begin{split}
\pm |\Omega|(\mu_\lambda)_\Omega&\leq \int_\Omega|\beta_\lambda(u_\lambda)| + \int_\Gamma|\beta_{\Gamma\lambda}(v_\lambda)|
+\int_\Omega\left|\lambda\partial_t u_\lambda
+\alpha_\lambda(\partial_t u_\lambda) + \lambda u_\lambda + T_\lambda\pi(u_\lambda)-g_\lambda\right|\\
&+\int_\Gamma\left|\lambda\partial_t v_\lambda
+\alpha_{\Gamma\lambda}(\partial_t v_\lambda) + \lambda v_\lambda + T_\lambda\pi_\Gamma(v_\lambda)-
g_{\Gamma\lambda}\right|\,,
\end{split}
\]
where the right hand side is bounded in $L^2(0,T)$ by the estimates already computed and by \eqref{est1}--\eqref{est2}
and \eqref{est1'}--\eqref{est7}. Hence, we have that
\beq\label{est9}
  \norm{\mu_\lambda}_{L^2(0,T; {W_{\bf n}})}\leq C\,.
\eeq

\subsection{The third estimate}\label{third}
We test \eqref{eq2_app} by $\beta_\lambda(u_\lambda)$:
integrating by parts yields
\[\begin{split}
  &\lambda\int_\Omega\widehat\beta_\lambda(u_\lambda(t)) +
   \lambda\int_{Q_t}\beta_\lambda(u_\lambda)u_\lambda
  +\int_{Q_t}\beta_\lambda'(u_\lambda)|\nabla u_\lambda|^2 + \int_{Q_t}|\beta_\lambda(u_\lambda)|^2\\
  &\qquad+\lambda\int_\Gamma\widehat\beta_\lambda(v_\lambda(t)) + 
  \int_{\Sigma_t}\beta_\lambda'(v_\lambda)|\nabla_\Gamma v_\lambda|^2
  +\int_{\Sigma_t}\beta_{\Gamma\lambda}(v_\lambda)\beta_\lambda(v_\lambda)
  =\lambda\int_\Omega\widehat\beta_\lambda(u_0) + \lambda\int_\Gamma\widehat\beta_\lambda(u_{0|\Gamma})\\
  &\qquad+\int_{Q_t}\left(g_\lambda-T_\lambda\pi(u_\lambda)-\alpha_\lambda(\partial_t u_\lambda)\right)\beta_\lambda(u_\lambda)
  +\int_{\Sigma_t}\left(g_{\Gamma\lambda}-T_\lambda\pi_\Gamma(v_\lambda)-\alpha_{\Gamma\lambda}(\partial_t v_\lambda)\right)
  \beta_\lambda(v_\lambda)
\end{split}\]
By the Young inequality, the estimates \eqref{est1}--\eqref{est2} and \eqref{est1'}--\eqref{est7}, the hypotheses
\eqref{g}--\eqref{u0} and the monotonicity of $\beta$ and $\beta_\Gamma$, we infer that for every $\delta>0$, we have
\[
  \frac12\int_{Q_t}|\beta_\lambda(u_\lambda)|^2 + \int_{\Sigma_t}\beta_{\Gamma\lambda}(v_\lambda)\beta_\lambda(v_\lambda)
  \leq C_\delta + \norm{\widehat\beta(u_0)}_{L^1(\Omega)} + \norm{\widehat\beta_\Gamma(u_{0|\Gamma})}_{L^1(\Gamma)}
  + \delta\int_{\Sigma_t}|\beta_\lambda(v_\lambda)|^2
\]
for a positive constant $C_\delta$, independent of $\lambda$. Now, by the assumption \eqref{dom1} and 
\cite[Lemma~4.4]{cal-colli}, recalling the definition of $\beta_\lambda$ and $\beta_{\Gamma\lambda}$,
it follows that
\[
  |\beta_\lambda(r)|\leq c\left(|\beta_{\Gamma\lambda}(r)|+1\right) \quad\forall\,r\in\erre\,.
\]
Hence, substituting in the last inequality and using the Young inequality we get (updating the constant $C_\delta$
at each step)
\[
  \frac12\int_{Q}|\beta_\lambda(u_\lambda)|^2 + \frac1c\int_{\Sigma}|\beta_\lambda(v_\lambda)|^2\leq
  C_\delta+\delta\int_\Sigma|\beta_\lambda(v_\lambda)|^2 + \int_\Sigma|\beta_\lambda(v_\lambda)|\leq
  C_\delta + 2\delta\int_\Sigma|\beta_\lambda(v_\lambda)|^2\,.
\]
Choosing $\delta:=\frac1{4c}$, we infer that
\beq\label{est10}
  \norm{\beta_\lambda(u_\lambda)}_{L^2(0,T; H)} + \norm{\beta_\lambda(v_\lambda)}_{L^2(0,T; H_\Gamma)} \leq C\,.
\eeq
By comparison in \eqref{eq2_app}, recalling also \eqref{est1}, \eqref{est7} and \eqref{est9}, we deduce that 
\beq
  \label{est11}
  \norm{\Delta u_\lambda}_{L^2(0,T; H)}\leq C\,.
\eeq
Hence, thanks to the classical results on elliptic regularity (see \cite[Thm.~3.2]{brezzi-gilardi}), 
\eqref{est2}, \eqref{est1'} and \eqref{est11} yield
\beq\label{est12}
  \eps^{1/2}\norm{u_\lambda}_{L^2(0,T; H^{3/2}(\Omega))} + \eps^{1/2}\norm{\partial_{\bf n}u_\lambda}_{L^2(0,T; H_\Gamma)} \leq C\,,
\eeq
and by comparison in \eqref{eq3_app} also
\[
  \norm{-\eps^{3/2}\Delta_\Gamma v_\lambda + \eps^{1/2}\beta_{\Gamma\lambda}(v_\lambda)}_{L^2(0,T; H_\Gamma)}\leq C\,.
\]
Now, since the operators $-\Delta_\Gamma$ and $\beta_{\Gamma\lambda}$ are monotone on $H_\Gamma$, 
testing $-\eps^{3/2}\Delta_\Gamma v_\lambda + \eps^{1/2}\beta_{\Gamma\lambda}(v_\lambda)$ 
by either $-\eps^{3/2}\Delta_\Gamma v_\lambda$ or $\eps^{1/2}\beta_{\Gamma\lambda}(v_\lambda)$,
integrating by parts on $\Gamma$, using monotonicity, the last estimate and the Young inequality
implies by a classical argument that
\beq\label{est13}
  \eps^{3/2}\norm{\Delta_\Gamma v_\lambda}_{L^2(0,T; H_\Gamma)} + 
  \eps^{1/2}\norm{\beta_{\Gamma\lambda}(v_\lambda)} \leq C\,.
\eeq

\subsection{The passage to the limit}
\label{limit}
In this section, we pass to the limit in the approximated problem \eqref{eq1_app}--\eqref{init_app} and we prove
the existence of a solution for the original problem.

First of all, thanks to the estimates \eqref{est1}--\eqref{est13}, there are
\begin{gather*}
  u \in L^\infty(0,T; V)\cap L^2(0,T; W)\,, \qquad
  v \in L^\infty(0,T; V_{\Gamma})\cap L^2(0,T; W_\Gamma)\,,\qquad
  \mu \in L^2(0,T; W_{\bf n})\,,\\
  \eta, \xi \in L^2(0,T; H)\,, \qquad \eta_\Gamma, \xi_\Gamma \in L^2(0,T; H_\Gamma)\,,
\end{gather*}
such that, along a subsequence that we still denote by $\lambda$ for simplicity,
\begin{gather}
  \label{conv1}
  u_\lambda \wstarto u \quad\text{in } L^\infty(0,T; V)\,, \qquad
  u_\lambda \wto u \quad\text{in } L^2(0,T; W)\,,\\
  \label{conv2}
  v_\lambda \wstarto u \quad\text{in } L^\infty(0,T; V_{\Gamma})\,,
  \qquad v_\lambda \wto v \quad\text{in } L^2(0,T; W_\Gamma)\,,\\
  \label{conv4}
  \mu_\lambda \to \mu \quad\text{in } L^2(0,T; W_{\bf n})\,,\\
  \label{conv5}
  \alpha_\lambda(\partial_t u_\lambda) \wto \eta \quad\text{in } L^2(0,T; H)\,, \qquad
  \alpha_{\Gamma\lambda}(\partial_t v_\lambda) \wto \eta_\Gamma \quad\text{in } L^2(0,T; H_\Gamma)\,,\\
  \label{conv6}
  \beta_\lambda(u_\lambda) \wto \xi \quad\text{in } L^2(0,T; H)\,, \qquad
  \beta_{\Gamma\lambda}(v_\lambda) \wto \xi_\Gamma \quad\text{in } L^2(0,T; H_\Gamma)
\end{gather}
and
\beq\label{conv7}
  \lambda u_\lambda \to 0 \quad\text{in } H^1(0,T; H)\,, \quad
  \lambda v_\lambda \to 0 \quad\text{in } H^1(0,T; H_\Gamma)\,, \quad
  \lambda\mu_\lambda\to0 \quad\text{in } L^2(0,T; H)\,.
\eeq
Moreover, noting that, by definition of Yosida approximation,
\[
  |\partial_t u_\lambda - J_\lambda\partial_t u_\lambda|=\lambda|\alpha_\lambda(\partial_t u_\lambda)|\,, \qquad
  |\partial_t v_\lambda - J_{\Gamma\lambda}\partial_t v_\lambda|=\lambda|\alpha_{\Gamma\lambda}(\partial_t v_\lambda)|\,,
\]
it is readily seen that \eqref{est3}--\eqref{est4} imply that $u \in H^1(0,T; H)$, $v \in H^1(0,T; H_\Gamma)$ and 
\beq\label{conv8}
  J_\lambda\partial_t u_\lambda \wto \partial_t u \quad\text{in } L^2(0,T; H)\,, \qquad
  J_{\Gamma\lambda}\partial_t v_\lambda \wto \partial_t v \quad\text{in } L^2(0,T; H_\Gamma)\,.
\eeq
It is clear that $u_{|\Gamma}=v$.
Moreover, since the inclusion $\V\embed \H$ is compact, by the classical compactness results for functions
with values in Banach spaces (see \cite[Cor.~4, p.~85]{simon}), we have
\beq\label{conv9}
  u_\lambda \to u \quad\text{in } C^0([0,T]; H)\,, \qquad v_\lambda\to v \quad\text{in } C^0([0,T]; H_\Gamma)\,,
\eeq
which together with \eqref{conv6} and the
strong-weak closure of the maximal monotone operators
$\beta$ and $\beta_\Gamma$ 
ensure that 
\[
  \xi \in \beta(u) \quad\text{a.e.~in } Q\,, \qquad \xi_\Gamma\in \beta_\Gamma(v) \quad\text{a.e.~in } \Sigma\,.
\]
Furthermore, by the Lipschitz continuity of $T_\lambda$, $\pi$ and $\pi_\Gamma$,
using the strong convergences of $u_\lambda$ and $v_\lambda$ it is a standard matter to check that
\[
  T_\lambda\pi(u_\lambda)\to \pi(u) \quad\text{in } L^2(0,T; H)\,, \qquad
  T_\lambda\pi_\Gamma(v_\lambda)\to \pi_\Gamma(v) \quad\text{in } L^2(0,T; H_\Gamma)\,.
\]

Taking this information into account and letting $\lambda\searrow0$ in \eqref{eq1_app}--\eqref{init_app}, we get
\begin{gather}
  \label{lim1}
  \partial_t u - \Delta \mu = 0\,, \\
  \label{lim2}
  \mu=\eta - \Delta u + \xi + \pi(u) - g\,,\qquad
  \eta_\Gamma + \partial_{\bf n} u - \eps\Delta_\Gamma v + \xi_\Gamma + \pi_\Gamma(v) = g_\Gamma\,.
\end{gather}

The last thing that we have to prove is that $\eta\in\alpha(\partial_t u)$ a.e.~in $Q$ and
$\eta_\Gamma\in\alpha_\Gamma(\partial_t v)$ a.e.~in~$\Sigma$.
To this end, performing the same test as in Section~\ref{first}, one can easily infer that
\[\begin{split}
  &\int_Q|\nabla \mu_\lambda|^2 
  + \int_Q\alpha_\lambda(\partial_t u_\lambda)\partial_t u_\lambda
  +\frac12\int_\Omega|\nabla u_\lambda(T)|^2 
  +\int_\Omega\widehat\beta_\lambda(u_\lambda(t))\\
  &\qquad+ \int_\Sigma\alpha_{\Gamma\lambda}(\partial_t v_\lambda)\partial_t v_\lambda
  +\frac\eps2\int_\Gamma|\nabla_\Gamma v_\lambda(t)|^2
  +\int_\Gamma\widehat\beta_{\Gamma\lambda}(v_\lambda(t))\\
  &\leq\frac\lambda2\int_\Omega|u_0|^2 + \frac12\int_\Omega|\nabla u_0|^2 + \int_\Omega\widehat\beta(u_0)
  +\frac\eps2\int_\Gamma|\nabla_\Gamma u_{0|\Gamma}|^2 + \int_\Gamma\widehat\beta_\Gamma(u_{0|\Gamma})\\
  &\qquad+\int_Q\left(g_\lambda-T_\lambda\pi(u_\lambda)\right)\partial_t u_\lambda
  +\int_\Sigma\left(g_{\Gamma\lambda}-T_\lambda\pi(v_\lambda)\right)\partial_t v_\lambda\,.
\end{split}\]
Now, since $u\in H^1(0,T; H)$, $v\in H^1(0,T; H_\Gamma)$,
$\xi \in \beta(u)$ a.e.~in $Q$ and $\xi_\Gamma \in \beta_\Gamma(v)$ a.e.~in $\Sigma$, 
by \cite[Lemma~3.3]{brezis} the functions
\[
  t \mapsto \int_\Omega \widehat\beta(u(t))\,, \qquad t\mapsto \int_\Gamma \widehat\beta_\Gamma(v(t))\,,
\]
are absolutely continuous on $[0,T]$ with derivatives given by $(\xi,\partial_t u)_H$ and $(\xi_\Gamma, \partial_t v)_{H_\Gamma}$,
respectively. Moreover, the strong convergence of $u_\lambda$ and $v_\lambda$ together with
\cite[Prop.~2.11]{brezis} ensure that
\[
  \int_\Omega\widehat\beta_\lambda(u_\lambda(T))\to \int_\Omega\widehat\beta(u(T))\,, \qquad
  \int_\Omega\widehat\beta_{\Gamma\lambda}(v_\lambda(T))\to \int_\Omega\widehat\beta_\Gamma(v(T))\,.
\]
Hence, by \eqref{conv1}--\eqref{conv7} and the weak lower semicontinuity of the convex integrands,
we infer
\[\begin{split}
  &\limsup_{\lambda\searrow0}\left[\int_Q\alpha_\lambda(\partial_t u_\lambda)\partial_t u_\lambda
  +\int_\Sigma\alpha_{\Gamma\lambda}(\partial_t v_\lambda)\partial_t v_\lambda\right]\\
  &\leq\frac12\int_\Omega|\nabla u_0|^2 
  +\frac\eps2\int_\Gamma|\nabla_\Gamma u_{0|\Gamma}|^2
  + \int_\Omega\widehat\beta(u_0)
  + \int_\Gamma\widehat\beta_\Gamma(u_{0|\Gamma})
  +\int_Q\left(g-\pi(u)\right)\partial_t u\\
  &\quad+\int_\Sigma\left(g_\Gamma-\pi(v)\right)\partial_t v
  -\liminf_{\lambda\searrow0}\left[\int_Q|\nabla\mu_\lambda|^2 + \frac12\int_\Omega|\nabla u_\lambda(T)|^2
  +\int_\Omega\widehat\beta(u_\lambda(T)) + \int_\Gamma \widehat\beta_\Gamma(v_\lambda(T))\right]\\
  &\leq\frac12\int_\Omega|\nabla u_0|^2 
  +\frac\eps2\int_\Gamma|\nabla_\Gamma u_{0|\Gamma}|^2
  + \int_\Omega\widehat\beta(u_0)
  + \int_\Gamma\widehat\beta_\Gamma(u_{0|\Gamma})
  +\int_Q\left(g-\pi(u)\right)\partial_t u\\
  &\quad+\int_\Sigma\left(g_\Gamma-\pi(v)\right)\partial_t v
  -\int_Q|\nabla\mu|^2 - \frac12\int_\Omega|\nabla u(T)|^2
  -\int_\Omega\widehat\beta(u(T)) - \int_\Gamma \widehat\beta_\Gamma(v(T))
\end{split}\]
Now, testing equation \eqref{lim1} by $\mu$, 
the first equation in \eqref{lim2} by $\partial_t u$ and taking the difference,
it is a standard matter to check that 
the right hand side of the last inequality coincides with
\[
  \int_Q\eta\partial_t u + \int_\Sigma \eta_\Gamma \partial_t v\,,
\]
so that
\[
  \limsup_{\lambda\searrow0}\left[\int_Q\alpha_\lambda(\partial_t u_\lambda)\partial_t u_\lambda
  +\int_\Sigma\alpha_{\Gamma\lambda}(\partial_t v_\lambda)\partial_t v_\lambda\right]\leq
  \int_Q\eta\partial_t u + \int_\Sigma \eta_\Gamma \partial_t v\,.
\]
This implies by a classical argument on maximal monotone operators
that $\eta \in \alpha(\partial_tu)$ a.e.~in $Q$ and $\eta_\Gamma \in \alpha_\Gamma(\partial_t v)$
a.e.~in $\Sigma$. This concludes the proof of Theorem~\ref{thm1}.


\section{The second existence result}
\setcounter{equation}{0}
\label{proof2}

We present here the proof of the second main result of the paper.
Recall that we are working now under the stronger conditions \eqref{g'}--\eqref{u0'_bis},
so that the regularity of the approximated solutions
is the one given by Lemma~\ref{prop_app2}.

\subsection{The first estimate}\label{first'}
We proceed as in Section~\ref{first}, using the monotonicity of $\alpha_\lambda$ 
on the left hand side. For every $t\in[0,T]$ we obtain
\[
  \begin{split}
    &\lambda\int_{Q_t}|\mu_\lambda|^2 + \int_{Q_t}|\nabla\mu_\lambda|^2
    +\lambda\int_{Q_t}|\partial_t u_\lambda|^2 
    +\frac\lambda2\int_\Omega|u_\lambda(t)|^2 + \frac12\int_\Omega|\nabla u_\lambda(t)|^2
    + \int_\Omega\widehat\beta_\lambda(u_\lambda(t))\\
    &\qquad+\lambda\int_{\Sigma_t}|\partial_t v_\lambda|^2
    +\int_{\Sigma_t}\alpha_{\Gamma\lambda}(\partial_t v_\lambda)\partial_t v_\lambda
    +\frac\eps2\int_\Sigma|\nabla_\Gamma v_\lambda(t)|^2 
    +\int_\Sigma\widehat\beta_{\Gamma\lambda}(v_\lambda(t))\\
    &\leq\frac\lambda2\int_\Omega|u_0|^2 + \frac12\int_\Omega|\nabla u_0|^2 + 
    \frac\eps2\int_\Sigma|\nabla_\Gamma u_{0|\Gamma}|^2
    +\int_\Omega\widehat\beta_\lambda(u_0) + \int_\Sigma\widehat\beta_{\Gamma\lambda}(u_{0|\Gamma})\\
    &\qquad+\int_{Q_t}\left(g_\lambda-T_\lambda\pi(u_\lambda)\right)\partial_t u_\lambda
    +\int_{\Sigma_t}\left(g_{\Gamma\lambda}-T_\lambda\pi_\Gamma(v_\lambda)\right)\partial_t v_\lambda\,.
  \end{split}
\]
Now, in order to handle the terms on the boundary,
we proceed exactly as in Section~\ref{first} using the coercivity of $\alpha_\Gamma$
on the left hand side
combined with the weighted Young inequality on the last term in right-hand side.
Furthermore, thanks to hypothesis \eqref{g'} and \eqref{eq1_app},
integrating by parts and taking into account that $\lambda\in(0,1)$
and the Lipschitz continuity of $T_\lambda$ and $\pi$, we have
\[\begin{split}
  &\int_{Q_t}\left(g_\lambda-T_\lambda\pi(u_\lambda)\right)\partial_t u_\lambda=
  \int_{Q_t}g_\lambda\partial_t u_\lambda + \int_{Q_t}T_\lambda\pi(u_\lambda)\left(\lambda\mu_\lambda-\Delta\mu_\lambda\right)\\
  &=-\int_0^t\ip{\partial_t g(s)}{u_\lambda(s)}_V\,ds + \int_\Omega g_\lambda(t)u_\lambda(t) - \int_\Omega g_\lambda(0)u_0\\
  &\qquad+\lambda\int_{Q_t}T_\lambda\pi(u_\lambda)\mu_\lambda
  +\int_{Q_t}\nabla T_\lambda\pi(u_\lambda)\cdot \nabla \mu_\lambda\\
   &\leq \frac12\norm{g}^2_{H^1(0,T; V^*)} + \frac12\norm{u_\lambda}_{L^2(0,t; V)}
   +\frac1{4\delta}\norm{g}_{L^\infty(0,T; V^*)}^2 + \delta\norm{u_\lambda(t)}_V^2
   +\norm{g}_{L^\infty(0,T; V^*)}\norm{u_0}_V\\
   &\qquad+\frac\lambda2\int_{Q_t}|\mu_\lambda|^2+ \frac12\int_{Q_t}|\nabla\mu_\lambda|^2 
  + \frac{C_\pi^2+1}2\norm{u_\lambda}_{L^2(0,t; V)}
\end{split}\]
for every $\delta>0$. Now, we write
\[
  \norm{u_\lambda}_V^2=\norm{\nabla u_\lambda}^2_H + \norm{u_\lambda}^2_H\,,
\]
where the first term can be handled using Gronwall's lemma and the second by \eqref{normH}.
Hence, choosing $\delta$ small enough
and rearranging the terms, thanks to the Gronwall lemma
we still obtain the estimates \eqref{est1}--\eqref{est2} and \eqref{est4}--\eqref{est1'}.

\subsection{The second estimate}\label{second'}
First of all, in order to perform this estimate, 
we need to identify the initial values at $t=0$ of $\partial_t u_\lambda$, $\partial_t v_\lambda$ and $\mu_\lambda$:
to this end, it is natural to require that these satisfy the system \eqref{eq1_app}--\eqref{eq3_app} at $t=0$.
We have the following result.
\begin{lem}\label{init_reg}
  There is a unique triplet $(u_{0\lambda}', v_{0\lambda}', \mu_{0\lambda})\in H\times H_{\Gamma}\times W_{\bf n}$ 
  such that
  \[
    \begin{cases}
    u_{0\lambda}' + \lambda\mu_{0\lambda} - \Delta\mu_{0\lambda}=0 \quad&\text{in } \Omega\,,\\
    \mu_{0\lambda} = \lambda u_{0\lambda}' + \alpha_\lambda(u_{0\lambda}') + \lambda u_0
    -\Delta u_0 + \beta_\lambda(u_0) + T_\lambda\pi(u_0) - g_\lambda(0)\quad&\text{in } \Omega\,,\\
    \lambda v_{0\lambda}' + \alpha_{\Gamma\lambda}(v_{0\lambda}') + \partial_{\bf n}u_0
    -\eps\Delta_\Gamma u_{0|\Gamma} + \beta_{\Gamma\lambda}(u_{0|\Gamma})
    +T_\lambda\pi_\Gamma(u_{0|\Gamma})=g_{\Gamma\lambda}(0) \quad&\text{in } \Gamma\,.
    \end{cases}
  \]
  Furthermore, there exists $C>0$, independent of $\lambda$, such that 
  \[
    \lambda\norm{\mu_{0\lambda}}_H^2 + \norm{\nabla\mu_{0\lambda}}_H^2
    +\lambda\norm{u_{0\lambda}'}_H^2 
    + \norm{\widehat{\alpha_\lambda^{-1}}(\alpha_\lambda(u_{0\lambda}'))}_{L^1(\Omega)}
    +\lambda\norm{v_{0\lambda}'}_H^2 
    +\norm{\widehat{\alpha_{\Gamma\lambda}^{-1}}(\alpha_{\Gamma\lambda}(v_{0\lambda}'))}_{L^1(\Gamma)} \leq C\,.
  \]
\end{lem}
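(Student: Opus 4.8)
The plan is to read the three relations at $t=0$, together with $\mu_{0\lambda}=-G_\lambda^{-1}u_{0\lambda}'$, as a stationary instance of the approximated system governed by the operators $A_\lambda,B_\lambda$ of Lemma~\ref{prop}, and then to reproduce, at the single time $t=0$, the test performed in Section~\ref{first'} together with the standing coercivity \eqref{coerc2}. For existence and uniqueness I would observe that, after inserting $\mu_{0\lambda}=-G_\lambda^{-1}u_{0\lambda}'$ (equivalently, the first equation), the system is equivalent to
\[
  A_\lambda(u_{0\lambda}',v_{0\lambda}') + B_\lambda(u_0,u_{0|\Gamma}) = (g_\lambda(0),g_{\Gamma\lambda}(0)) - (T_\lambda\pi(u_0),T_\lambda\pi_\Gamma(u_{0|\Gamma})) \quad\text{in }\H .
\]
By \eqref{u0'} one has $(u_0,u_{0|\Gamma})\in\W=D(B_\lambda)$ and $\partial_{\bf n}u_0\in H^{1/2}(\Gamma)\subseteq H_\Gamma$, so $B_\lambda(u_0,u_{0|\Gamma})\in\H$; the right-hand side belongs to $\H$ by \eqref{g'_bis} and the Lipschitz continuity of $T_\lambda\pi$, $T_\lambda\pi_\Gamma$. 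Since Lemma~\ref{prop} gives that $A_\lambda$ is maximal monotone, bounded, and satisfies $(A_\lambda z,z)_\H\geq\lambda\norm{z}_\H^2$ (hence strongly monotone and coercive), it is a bijection of $\H$; this yields a unique $(u_{0\lambda}',v_{0\lambda}')\in\H$, and then $\mu_{0\lambda}:=-G_\lambda^{-1}u_{0\lambda}'\in W_{\bf n}$ recovers the three equations. (Equivalently, the third equation decouples and is solved pointwise by the $\frac1\lambda$-Lipschitz bijection $(\lambda I+\alpha_{\Gamma\lambda})^{-1}$ of $\erre$ applied to its datum, which lies in $H_\Gamma$ by \eqref{u0'} and \eqref{g'_bis}; the first two then reduce to $\lambda u_{0\lambda}'+\alpha_\lambda(u_{0\lambda}')+G_\lambda^{-1}u_{0\lambda}'=f_\lambda$ in $H$ with $f_\lambda\in H$, uniquely solvable because the left-hand operator is Lipschitz and strongly monotone on $H$.)

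For the bound I would test the first equation by $\mu_{0\lambda}$, the second by $u_{0\lambda}'$, subtract, and use $\partial_{\bf n}\mu_{0\lambda}=0$ to obtain
\[
  \lambda\norm{\mu_{0\lambda}}_H^2 + \norm{\nabla\mu_{0\lambda}}_H^2 + \lambda\norm{u_{0\lambda}'}_H^2 + \int_\Omega\alpha_\lambda(u_{0\lambda}')u_{0\lambda}'
  = -\lambda\int_\Omega u_0\,u_{0\lambda}' - \int_\Omega\left(-\Delta u_0+\beta_\lambda(u_0)-g_\lambda(0)\right)u_{0\lambda}' - \int_\Omega T_\lambda\pi(u_0)\,u_{0\lambda}' .
\]
The crucial point, and the main obstacle, is that $u_{0\lambda}'$ is not expected to be bounded in $H$ uniformly in $\lambda$ (no coercivity is assumed on $\alpha$), so the right-hand side cannot be estimated through $\norm{u_{0\lambda}'}_H$ — that would produce a $\lambda^{-1}$ — and must instead be estimated only through $u_{0\lambda}'=-\lambda\mu_{0\lambda}+\Delta\mu_{0\lambda}$ coming from the first equation. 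This is precisely why the datum is regularized as $g_\lambda=(I-\lambda\Delta_{\bf n})^{-1}g$ and why the uniform $V$-boundedness postulated in \eqref{u0'_ter} is indispensable.

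Concretely, by \eqref{u0'_bis} one has $\abs{\beta_\lambda(u_0)}\leq\abs{y_0}$ a.e., so the combination of $-\Delta u_0$, $\beta_\lambda(u_0)$ and the regularized datum appearing above is bounded in $V$ uniformly in $\lambda$ by \eqref{u0'_ter}; moreover $T_\lambda\pi(u_0)$ is bounded in $V$ by $C_\pi\norm{u_0}_V$ (composition of Lipschitz maps, with $u_0\in W\subseteq V$). For each such factor $w$, the contribution $-\lambda\int_\Omega w\,\mu_{0\lambda}$ is absorbed into $\tfrac{\lambda}{8}\norm{\mu_{0\lambda}}_H^2$ (using $\lambda<1$), while $\int_\Omega w\,\Delta\mu_{0\lambda}=-\int_\Omega\nabla w\cdot\nabla\mu_{0\lambda}$ — the boundary term vanishing since $\partial_{\bf n}\mu_{0\lambda}=0$ and $w_{|\Gamma}\in H^{1/2}(\Gamma)$ — is absorbed into $\tfrac18\norm{\nabla\mu_{0\lambda}}_H^2$; the Young inequality then gives $\lambda\norm{\mu_{0\lambda}}_H^2+\norm{\nabla\mu_{0\lambda}}_H^2+\lambda\norm{u_{0\lambda}'}_H^2+\int_\Omega\alpha_\lambda(u_{0\lambda}')u_{0\lambda}'\leq C$. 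Recalling $\widehat{\alpha_\lambda^{-1}}(\alpha_\lambda(r))=r\,\alpha_\lambda(r)-\widehat{\alpha_\lambda}(r)\leq r\,\alpha_\lambda(r)$ (as $\widehat{\alpha_\lambda}\geq0$) yields the bound on $\norm{\widehat{\alpha_\lambda^{-1}}(\alpha_\lambda(u_{0\lambda}'))}_{L^1(\Omega)}$.

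Finally, for the boundary contribution I would test the third equation by $v_{0\lambda}'$: its datum is bounded in $H_\Gamma$ uniformly in $\lambda$ (by \eqref{u0'}, $\Delta_\Gamma u_{0|\Gamma},\partial_{\bf n}u_0\in H_\Gamma$; by \eqref{g'_bis}, $g_{\Gamma\lambda}(0)\in H_\Gamma$; by \eqref{u0'_bis}, $\abs{\beta_{\Gamma\lambda}(u_{0|\Gamma})}\leq\abs{y_{0\Gamma}}$; and $T_\lambda\pi_\Gamma(u_{0|\Gamma})$ is controlled by $u_{0|\Gamma}$). Then, exactly as in Section~\ref{first}, the coercivity \eqref{coerc2} gives $\alpha_{\Gamma\lambda}(v_{0\lambda}')v_{0\lambda}'\geq b_1\abs{J_{\Gamma\lambda}v_{0\lambda}'}^2-b_2+\lambda\abs{\alpha_{\Gamma\lambda}(v_{0\lambda}')}^2$; splitting the source term along $v_{0\lambda}'=\lambda\alpha_{\Gamma\lambda}(v_{0\lambda}')+J_{\Gamma\lambda}v_{0\lambda}'$ and applying the weighted Young inequality lets one absorb everything, so that $\lambda\norm{v_{0\lambda}'}_{H_\Gamma}^2+\int_\Gamma\alpha_{\Gamma\lambda}(v_{0\lambda}')v_{0\lambda}'\leq C$ and hence, as above, $\norm{\widehat{\alpha_{\Gamma\lambda}^{-1}}(\alpha_{\Gamma\lambda}(v_{0\lambda}'))}_{L^1(\Gamma)}\leq C$. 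Combining the bulk and boundary estimates proves the lemma.
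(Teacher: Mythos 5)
Your argument is essentially the paper's own: existence and uniqueness are obtained, exactly as in the paper, by reducing the system to $A_\lambda(u_{0\lambda}',v_{0\lambda}')=(z_{0\lambda},w_{0\lambda})$ in $\H$ (with $\mu_{0\lambda}=-G_\lambda^{-1}u_{0\lambda}'$) and using that $A_\lambda$ is Lipschitz and strongly monotone, hence bijective on $\H$; and the uniform bound comes from the same test (first equation against $\mu_{0\lambda}$, second against $u_{0\lambda}'$, third against $v_{0\lambda}'$), with the bulk source rewritten through $u_{0\lambda}'=-\lambda\mu_{0\lambda}+\Delta\mu_{0\lambda}$ and estimated via the $V$-bound coming from \eqref{u0'_ter}, and the boundary source absorbed through \eqref{coerc2} after the splitting $v_{0\lambda}'=J_{\Gamma\lambda}v_{0\lambda}'+\lambda\alpha_{\Gamma\lambda}(v_{0\lambda}')$. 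All the individual steps (uniform $H_\Gamma$-bound of the boundary datum, $|\beta_\lambda(u_0)|\leq|y_0|$, $|\beta_{\Gamma\lambda}(u_{0|\Gamma})|\leq|y_{0\Gamma}|$, the Fenchel identity giving the $L^1$ bounds on $\widehat{\alpha_\lambda^{-1}}$, $\widehat{\alpha_{\Gamma\lambda}^{-1}}$) are correct and match the paper.

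The one thing you miss is that the lemma is also needed, and is proved in the paper, in the alternative regime of the last sentence of Theorem~\ref{thm2}, namely when \eqref{coerc1} holds and \eqref{u0'_ter} is dropped; the paper's proof explicitly distinguishes these two cases. In that regime your claim that the $V$-boundedness of \eqref{u0'_ter} is ``indispensable'' is not accurate: one estimates $\int_\Omega z_{0\lambda}u_{0\lambda}'$ directly, with $z_{0\lambda}$ bounded merely in $H$ by \eqref{g'_bis}--\eqref{u0'_bis}, by writing $u_{0\lambda}'=J_\lambda u_{0\lambda}'+\lambda\alpha_\lambda(u_{0\lambda}')$ and using that \eqref{coerc1} yields $\alpha_\lambda(u_{0\lambda}')u_{0\lambda}'\geq a_1|J_\lambda u_{0\lambda}'|^2+\lambda|\alpha_\lambda(u_{0\lambda}')|^2-a_2$ on the left-hand side, so the weighted Young inequality absorbs both pieces exactly as you did for the boundary term. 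Adding this second branch would make your proof complete and fully aligned with the paper's.
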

\begin{proof}
  Setting $z_{0\lambda}:=g_\lambda(0)-T_\lambda\pi(u_0)-\beta_\lambda(u_0)+\Delta u_0 - \lambda u_0$
  and $w_{0\lambda}:=g_{\Gamma\lambda}(0)-T_\lambda\pi_\Gamma(u_{0|\Gamma})-\beta_{\Gamma\lambda}(u_{0|\Gamma})
  +\eps\Delta_\Gamma u_{0|\Gamma}-\partial_{\bf n}u_0$, by the hypothesis \eqref{u0'} we have $(z_{0\lambda}, w_{0\lambda})\in \H$.
  Moreover, the system which we are interested in reduces to $A_\lambda(u_{0\lambda}', v_{0\lambda}')=(z_{0\lambda}, w_{0\lambda})$,
  with $\mu_{0\lambda}=-G_\lambda^{-1}(u_{0\lambda}')$. Since $A_\lambda$ is bi-Lipschitz continuous on $\H$, 
  there is a unique pair $(u_{0\lambda}', v_{0\lambda}')\in\H$ solving the system with 
  $\mu_{0\lambda}=-G_\lambda^{-1}(u_{0\lambda}')\in W_{\bf n}$ by definition of $G_\lambda$.
  Furthermore, testing the first equation by $\mu_{0\lambda}$, the second by $u_{0\lambda}'$, taking the difference
  and recalling the hypotheses \eqref{u0'}--\eqref{u0'_bis}, we have 
  \[\begin{split}
  \lambda\int_\Omega|\mu_{0\lambda}|^2&+\int_\Omega|\nabla\mu_{0\lambda}|^2
  +\lambda\int_\Omega|u_{0\lambda}'|^2 + \int_\Omega\alpha_\lambda(u_{0\lambda}')u_{0\lambda}'
  +\lambda\int_\Gamma|v_{0\lambda}'| + \int_\Gamma\alpha_{\Gamma\lambda}(v_{0\lambda}')v_{0\lambda}'\\
  &\leq\int_\Omega z_{0\lambda}u_{0\lambda}' + \int_\Gamma w_{0\lambda}v_{0\lambda}'\,.
  \end{split}\]
  On the left hand side, we use \eqref{coerc2} and the fact that $\alpha_{\Gamma\lambda}\in\alpha_\Gamma(J_{\Gamma\lambda})$
  to infer that 
  \[
    \int_\Gamma\alpha_{\Gamma\lambda}(v_{0\lambda}')v_{0\lambda}'\geq 
    b_1\int_{\Gamma}|J_{\Gamma\lambda} v_{0\lambda}'|^2 + 
    \lambda\int_\Gamma|\alpha_{\Gamma\lambda}(v_{0\lambda}')|^2 - b_2|\Gamma|\,,
  \]
  while on the right hand side, since $v_{0\lambda}'-J_{\Gamma\lambda} v_{0\lambda}'=
  \lambda\alpha_{\Gamma\lambda}(v_{0\lambda}')$, for every $\delta>0$ we have
  \[
    \int_\Gamma w_{0\lambda}v_{0\lambda}' \leq \frac1{2\delta}\int_\Gamma|w_{0\lambda}|^2
  +\frac\delta2\int_\Gamma|v_{0\lambda}'|^2\leq
  \frac1{2\delta}\int_\Gamma|w_{0\lambda}|^2
  +\delta\lambda^2\int_\Gamma|\alpha_{\Gamma\lambda}(v_{0\lambda}')|^2 + 
  \delta\int_\Gamma|J_{\Gamma\lambda} v_{0\lambda}'|^2\,,
  \]
  where $w_{0\lambda}$ is bounded in $H_\Gamma$ uniformly in $\lambda$ by \eqref{g'_bis}--\eqref{u0'}.
  Now, recall that either \eqref{coerc1} or \eqref{u0'_ter} is in order: we distinguish the two cases.
  Under hypothesis \eqref{coerc1}, we have, on the left hand side,
  \[
    \int_\Omega\alpha_\lambda(u_{0\lambda}')u_{0\lambda}'\geq 
    a_1\int_{\Omega}|J_\lambda u_{0\lambda}'|^2 + \lambda\int_\Omega|\alpha_\lambda(u_{0\lambda}')|^2 - a_2|\Omega|
  \]
  while on the right hand side, since $u_{0\lambda}'-J_\lambda u_{0\lambda}'=\lambda\alpha_\lambda(u_{0\lambda}')$,
  \[
  \int_\Omega z_{0\lambda}u_{0\lambda}' \leq \frac1{2\delta}\int_\Omega|z_{0\lambda}|^2
  +\frac\delta2\int_\Omega|u_{0\lambda}'|^2\leq
  \frac1{2\delta}\int_\Omega|z_{0\lambda}|^2
  +\delta\lambda^2\int_\Omega|\alpha_\lambda(u_{0\lambda}')|^2 + \delta\int_\Omega|J_\lambda u_{0\lambda}'|^2\,.
  \]
  Since $z_{0\lambda}$ is uniformly bounded in $H$ by \eqref{g'_bis}--\eqref{u0'_bis}, choosing $\delta>0$ sufficiently small
  and rearranging the terms yields the desired estimate. Otherwise, if \eqref{u0'_ter} is in order, 
  then $z_{0\lambda}$ is uniformly bounded also in $V$ by \eqref{u0'_ter}
  and we can estimate the term on the right hand side in the duality $V$--$V^*$:
  \[
    \int_\Omega z_{0\lambda}u_{0\lambda}'=
  -\lambda\int_\Omega z_{0\lambda}\mu_{0\lambda} - \int_\Omega \nabla z_{0\lambda}\cdot\nabla\mu_{0\lambda}\leq
  \frac\lambda2\int_\Omega|\mu_{0\lambda}|^2 + \frac12\int_\Omega|\nabla\mu_{0\lambda}|^2 + \norm{z_{0\lambda}}_V^2\,,
  \]
  from which the desired estimate follows rearranging the terms. Note that we have used the fact that
  \[
    \widehat{\alpha_\lambda^{-1}}(\alpha_\lambda(u_{0\lambda}'))\leq
    \widehat\alpha_\lambda(u_{0\lambda}')+
    \widehat{\alpha_\lambda^{-1}}(\alpha_\lambda(u_{0\lambda}'))=
    \alpha_\lambda(u_{0\lambda}')u_{0\lambda}'
  \]
  and the equivalent statement for $\alpha_{\Gamma}$.
\end{proof}

We are ready now to perform the estimate. The intuitive idea is to test equation \eqref{eq1_app} by $\partial_t \mu_\lambda$, 
the time-derivative of equation \eqref{eq2_app} by $\partial_t u_\lambda$, take the difference and integrate. However,
the regularity of the approximated solutions does not allow us to do so. Consequently, we prove by hand that the 
resulting estimate holds anyway. To this end, we proceed in a technical way
through a discrete-time argument
as in \cite[Section~5.2]{bcst1}, to which we refer for further detail; however, we avoid
any detailed computation for sake of conciseness.

Fix $t\in[0,T]$ and set, for every $n\in\enne$, $\tau_n:=\frac{t}{n}$ and $t:=i\tau_n$ for $i\in\{0,\ldots,n\}$.
Now, by the regularities given by Lemma~\ref{prop_app2}, 
we note that \eqref{eq1_app}--\eqref{eq2_app} and \eqref{eq3_app} hold for every $s\in[0,T]$. Hence, it makes sense to
test \eqref{eq1_app} at time $t_n^i$ by $\mu_\lambda(t_n^i)-\mu_\lambda(t_n^{i-1})$,
the difference between \eqref{eq2_app} at $t_n^i$ and at $t_n^{i-1}$ by $\partial_t u_\lambda(t_n^i)$,
and take the difference. Moreover, since $\partial \widehat{\alpha_\lambda^{-1}}=\alpha_\lambda^{-1}$, 
for every $i=1,\ldots,n$, we have that 
\[
  \left(\alpha_\lambda(\partial_t u_\lambda(t_n^i))-\alpha_\lambda(\partial_t u_\lambda(t_n^{i-1}))\right)\partial_t u_\lambda(t_n^i)
  \geq\widehat{\alpha_\lambda^{-1}}(\alpha_\lambda(\partial_t u_\lambda(t_n^i)))-
  \widehat{\alpha_\lambda^{-1}}(\alpha_{\Gamma\lambda}(\partial_t u_\lambda(t_n^{i-1})))
\]
and similarly for the terms in $\alpha_{\Gamma\lambda}$. Hence, 
integrating by parts and summing over $i$ yields
(after some technical computations analogue to the ones in \cite[Section~5.2]{bcst1}),
\[\begin{split}
  &\frac\lambda2\int_\Omega|\mu_\lambda(t)| + \frac12\int_\Omega|\nabla\mu_\lambda(t)|^2
  +\frac\lambda2\int_\Omega|\partial_t u_\lambda(t)|^2 
  + \int_\Omega\widehat{\alpha_\lambda^{-1}}(\alpha_\lambda(\partial_t u_\lambda(t)))\\
  &\quad+\lambda\int_{Q_t}|\partial_t u_\lambda|^2+\int_{Q_t}|\nabla\partial_t u_\lambda|^2
  +\int_{Q_t}\beta_\lambda'(u_\lambda)|\partial_t u_\lambda|^2
  +\frac\lambda2\int_\Gamma|\partial_t v_\lambda(t)|^2\\
  &\quad+\int_\Gamma\widehat{\alpha_{\Gamma\lambda}^{-1}}(\alpha_{\Gamma\lambda}(\partial_t v_\lambda(t)))
  +\eps\int_{\Sigma_t}|\nabla_\Gamma\partial_t v_\lambda|^2
  +\int_{\Sigma_t}\beta_{\Gamma\lambda}'(v_\lambda)|\partial_t v_\lambda|^2\\
  &\leq\frac\lambda2\int_\Omega|\mu_{0\lambda}|^2 + \frac12\int_\Omega|\nabla\mu_{0\lambda}|^2
  +\frac\lambda2\int_\Omega|u_{0\lambda}'|^2
  +\!\int_\Omega\widehat{\alpha_\lambda^{-1}}(\alpha_\lambda(u_{0\lambda}'))
  +\frac\lambda2\int_\Gamma|v_{0\lambda}'|^2 
  +\!\int_\Gamma\widehat{\alpha_{\Gamma\lambda}^{-1}}(\alpha_{\Gamma\lambda}(v_{0\lambda}'))\\
  &\quad+\int_{Q_t}\partial_t g_\lambda \partial_t u_\lambda -\int_{Q_t} T_\lambda'(\pi(u_\lambda))\pi'(u_\lambda)|\partial_t u_\lambda|^2
  +\int_{\Sigma_t}\partial_t g_{\Gamma\lambda} \partial_t v_\lambda 
  -\int_{\Sigma_t} T_\lambda'(\pi_\Gamma(v_\lambda))\pi_\Gamma'(v_\lambda)|\partial_t v_\lambda|^2\,.
\end{split}\]
Now, the first six terms and the last term on
the right-hand side are bounded uniformly in $\lambda$ thanks to Lemma~\ref{init_reg}
and the estimate \eqref{est4}, respectively (recall that $|T_\lambda'|\leq 1$ and $|\pi_\Gamma'|\leq C_{\pi_\Gamma}$).
Moreover, the three remaining terms can be estimated using the duality $V$--$V^*$, the assumption
\eqref{g'}, the Young inequality and \eqref{normH} by
\[
  C_\delta+ \delta\norm{\partial_t u_\lambda}^2_{L^2(0,t; V)} \leq C_\delta + \delta\norm{\nabla \partial_t u_\lambda}^2_{L^2(0,t; H)}
  +\delta \lambda^2\norm{\mu_\lambda}^2_{L^2(0,t; H)}\,,
\]
for every $\delta >0$.
Hence, choosing $\delta$ sufficiently small, 
we deduce that there is a positive constant $C$ such that
\begin{gather}
  \label{est14}
  \norm{\nabla\partial_t u_\lambda}_{L^2(0,T; H)} + \lambda^{1/2}\norm{\partial_t u_\lambda}_{L^\infty(0,T; H)}\leq C\,,\\
  \label{est15}
  \norm{\nabla_\Gamma \partial_t v_\lambda}_{L^2(0,T; H_\Gamma)} + 
  \lambda^{1/2}\norm{\partial_t v_\lambda}_{L^\infty(0,T; H_\Gamma)}\leq C\,,\\
  \label{est16}
  \norm{\nabla\mu_\lambda}_{L^\infty(0,T; H)} + \lambda^{1/2}\norm{\mu_\lambda}_{L^\infty(0,T; H)}\leq C\,,\\
  \label{est16bis}
  \norm{\widehat{\alpha_\lambda^{-1}}(\alpha_\lambda(\partial_t u_\lambda))}_{L^\infty(0,T; L^1(\Omega))}
  +\norm{\widehat{\alpha_{\Gamma\lambda}^{-1}}(\alpha_{\Gamma\lambda}(\partial_t v_\lambda))}_{L^\infty(0,T; L^1(\Gamma))}\leq C
\end{gather}
Thanks to \eqref{est14}--\eqref{est15}, condition \eqref{normH}
and \eqref{est1} and \eqref{est5}, it follows that 
$\partial_t u_\lambda$ and $\partial_t v_\lambda$ are uniformly bounded
in $L^2(0,T; V)$ and $L^2(0,T; V_{\Gamma})$, respectively.
Moreover, integrating \eqref{alpha_sub} it easily follows that $\widehat{\alpha}_\lambda$
and $\widehat\alpha_{\Gamma\lambda}$ are uniformly bounded in $\lambda$ from above 
by a quadratic function: hence, $\widehat{\alpha_\lambda^{-1}}=(\widehat\alpha_\lambda)^*$ 
and $\widehat{\alpha_{\Gamma\lambda}^{-1}}=(\widehat\alpha_{\Gamma\lambda})^*$
are uniformly bounded from below by a quadratic function. Consequently, from
the estimate \eqref{est16bis} we infer also that
\beq
  \label{est17}
  \norm{\alpha_\lambda(\partial_t u_\lambda)}_{L^\infty(0,T; H)} + 
  \norm{\alpha_{\Gamma\lambda}(\partial_t v_\lambda)}_{L^\infty(0,T; H_\Gamma)}\leq C\,.
\eeq
Moreover, from the coercivity of $\alpha_\Gamma$ and the 
Young inequality, we have 
\[
  b_1|J_{\Gamma\lambda}\partial_t v_\lambda|^2 - b_2 \leq 
  \alpha_{\Gamma\lambda}(\partial_t v_\lambda)J_{\Gamma\lambda}\partial_t v_\lambda\leq
  \frac{b_1}{2}|J_{\Gamma\lambda}\partial_t v_\lambda|^2
  +\frac{1}{2b_1}|\alpha_{\Gamma\lambda}(\partial_t v_\lambda)|^2,
\]
so that by \eqref{est17} we deduce that 
\beq
  \label{est17_bis}
  \norm{J_{\Gamma\lambda}\partial_t v_\lambda}_{L^\infty(0,T; H)}\leq C\,.
\eeq
Finally, arguing exactly as in Section~\ref{second}
but using the stronger estimates \eqref{est14}--\eqref{est17},
it is readily seen that $(\mu_\lambda)_\Omega$ is uniformly bounded in $L^\infty(0,T)$, so that by \eqref{est16} we have
\[
  \norm{\mu_\lambda}_{L^\infty(0,T; V)}\leq C\,.
\]
Moreover, thanks to \eqref{est14} and \eqref{est16}, by comparison in \eqref{eq1_app}
and elliptic regularity we have 
\beq\label{est18}
  \norm{\mu_\lambda}_{L^\infty(0,T; V)\cap L^2(0,T; W_{\bf n}\cap H^3(\Omega))}
  +\norm{\partial_t u_\lambda}_{L^\infty(0,T; V^*)}\leq C\,.
\eeq
It is clear that under the assumption \eqref{coerc1}, the same argument 
ensures that $J_\lambda\partial_t u_\lambda$ is uniformly bounded in $L^{\infty}(0,T; H)$ as well,
hence also $\mu_\lambda$ in $L^\infty(0,T; W_{\bf n})$ form \eqref{eq1_app},
from which the last sentence of Theorem~\ref{thm2} follows.

\subsection{The third estimate}\label{third'}
For every $t\in[0,T]$, we test equation \eqref{eq2_app} by $-\Delta u_\lambda(t)$ and integrate by parts:
\[\begin{split}
  \int_\Omega|\Delta u_\lambda(t)|^2 &+ 
  \int_\Omega\beta_\lambda'(u_\lambda(t))|\nabla u_\lambda(t)|^2
  +\eps\int_\Gamma\beta_\lambda'(v_\lambda(t))|\nabla_\Gamma v_\lambda(t)|^2
  +\int_\Gamma\beta_\lambda(v_\lambda(t))\beta_{\Gamma\lambda}(v_\lambda(t))\\
  &=-\int_\Omega\left(g_\lambda(t)-T_\lambda\pi(u_\lambda(t))-\lambda u_\lambda(t)+\mu_\lambda(t)-\lambda\partial_t u_\lambda(t)
  -\alpha_\lambda(\partial_t u_\lambda(t))\right)\Delta u_\lambda(t)\\
  &\quad-\int_\Gamma\left(g_{\Gamma\lambda}(t)-T_\lambda\pi_\Gamma(v_\lambda(t))
  -\lambda\partial_t v_\lambda(t)-\alpha_{\Gamma\lambda}(\partial_t v_\lambda(t))\right)\beta_\lambda(v_\lambda(t))\,.
\end{split}\]
Thanks to \eqref{g} the estimates \eqref{est14}--\eqref{est17}, the terms in brackets on the right hand side
are bounded uniformly in $\lambda$. Hence, using the weighted Young inequality and 
the hypothesis \eqref{dom1} as in Section~\ref{third}, we infer that 
\beq
  \label{est19}
  \norm{\Delta u_\lambda}_{L^\infty(0,T; H)} + \norm{\beta_\lambda(u_\lambda)}_{L^\infty(0,T; H_\Gamma)}\leq C\,.
\eeq
By comparison in \eqref{eq2_app} we deduce that 
\beq
  \label{est20}
  \norm{\beta_\lambda(u_\lambda)}_{L^\infty(0,T; H)}\leq C\,.
\eeq
Moreover, by the classical results on elliptic regularity (see \cite[Thm.~3.2]{brezzi-gilardi}), 
estimate \eqref{est19} implies, together with \eqref{est2} and \eqref{est1'}, that
\beq
  \label{est21}
  \eps^{1/2}\norm{u_\lambda}_{L^\infty(0,T; H^{3/2}(\Omega))}+\eps^{1/2}\norm{\partial_{\bf n}u_\lambda}_{L^\infty(0,T; H_\Gamma)}
  \leq C\,
\eeq
and, by comparison in \eqref{eq3_app}, 
\[
  \norm{-\eps^{3/2}\Delta_\Gamma v_\lambda + \eps^{1/2}\beta_{\Gamma\lambda}(v_\lambda)}_{L^\infty(0,T; H_\Gamma)}\leq C\,.
\]
We deduce, as usual, that 
\beq
  \label{est22}
  \eps^{3/2}\norm{\Delta_\Gamma v_\lambda}_{L^\infty(0,T; H_\Gamma)}
  +\eps^{1/2}\norm{\beta_{\Gamma\lambda(v_\lambda)}}_{L^\infty(0,T; H_\Gamma)}\leq C\,.
\eeq

\subsection{The passage to the limit}
Taking into account \eqref{est1}--\eqref{est2}, \eqref{est4}--\eqref{est1'} and \eqref{est14}--\eqref{est22}, 
recalling that $\eps>0$ is fixed, we infer that there are
\begin{gather*}
  u \in W^{1,\infty}(0,T; V^*)\cap H^1(0,T; V)\cap L^\infty(0,T; W)\,, \\
  v \in W^{1,\infty}(0,T; H)\cap H^1(0,T; V_\Gamma)\cap L^\infty(0,T; W_\Gamma)\,,\\
  \mu \in L^\infty(0,T; W_{\bf n})\cap L^2(0,T; H^3(\Omega))\,,\\
  \eta\,,\xi \in L^\infty(0,T; H)\,, \qquad \eta_\Gamma\,,\xi_\Gamma \in L^\infty(0,T; H_\Gamma)\,,
\end{gather*}
such that, along a subsequence that we still denote by $\lambda$ for simplicity,
\begin{align*}
  u_\lambda \wstarto u \quad\text{in } W^{1,\infty}(0,T; V^*)\cap L^\infty(0,T; W)\,,& \qquad
  u_\lambda \wto v \quad\text{in } H^1(0,T; V)\,,\\
  v_\lambda \wstarto u \quad\text{in } W^{1,\infty}(0,T; H_\Gamma)\cap L^\infty(0,T; W_{\Gamma})\,,&
  \qquad v_\lambda \wto v \quad\text{in } H^1(0,T; V_\Gamma)\,,\\
  \mu_\lambda \wstarto \mu \quad\text{in } L^\infty(0,T; V)\,,&
  \qquad \mu_\lambda \wto \mu \quad\text{in } L^2(0,T; W_{\bf n}\cap H^3(\Omega))\,,\\
  \alpha_\lambda(\partial_t u_\lambda) \wstarto \eta \quad\text{in } L^\infty(0,T; H)\,,& \qquad
  \alpha_{\Gamma\lambda}(\partial_t v_\lambda) \wstarto \eta_\Gamma \quad\text{in } L^\infty(0,T; H_\Gamma)\,,\\
  \beta_\lambda(u_\lambda) \wstarto \xi \quad\text{in } L^\infty(0,T; H)\,,& \qquad
  \beta_{\Gamma\lambda}(v_\lambda) \wstarto \xi_\Gamma \quad\text{in } L^\infty(0,T; H_\Gamma)
\end{align*}
and
\[
  \lambda u_\lambda \to 0 \quad\text{in } W^{1,\infty}(0,T; H)\,, \quad
  \lambda v_\lambda \to 0 \quad\text{in } W^{1,\infty}(0,T; H_\Gamma)\,, \quad
  \lambda\mu_\lambda\to0 \quad\text{in } L^\infty(0,T; H)\,.
\]
At this point, it is straightforward to conclude as in Section~\ref{limit}
and Theorem~\ref{thm2} is proved.


\section{The third existence result}\label{proof3}
First of all, note that all the estimates which do not involve the assumption \eqref{alpha_sub}
continue to hold also in this setting. Namely, going back to Sections~\ref{first} and \ref{first'},
it is readily seen that \eqref{est1}--\eqref{est2}, \eqref{est4}--\eqref{est1'}, \eqref{est14}--\eqref{est16bis}
are satisfied.

Secondly, by \eqref{ip_0dom}, there is $\delta>0$ such that $\pm\delta\in D(\alpha)\cap D(\alpha_\Gamma)$. Hence,
by the Young inequality we have
\begin{gather*}
  \pm\delta\alpha_\lambda(\partial_t u_\lambda)\leq \widehat\alpha_\lambda(\pm \delta) 
  + \widehat{\alpha_{\lambda}^{-1}}(\partial_t u_\lambda)
  \leq \widehat\alpha(\pm\delta) + \widehat{\alpha_{\lambda}^{-1}}(\partial_t u_\lambda)\,,\\
  \pm\delta\alpha_{\Gamma\lambda}(\partial_t v_\lambda)\leq \widehat\alpha_{\Gamma\lambda}(\pm \delta) 
  + \widehat{\alpha_{\Gamma\lambda}^{-1}}(\partial_t v_\lambda)
  \leq \widehat\alpha_{\Gamma}(\pm\delta) + \widehat{\alpha_{\Gamma\lambda}^{-1}}(\partial_t v_\lambda)\,,
\end{gather*}
so that by \eqref{est16bis} we deduce that 
\[
  \norm{\alpha_\lambda(\partial_t u_\lambda)}_{L^\infty(0,T; L^1(\Omega))}+
  \norm{\alpha_{\Gamma\lambda}(\partial_t v_\lambda)}_{L^\infty(0,T; L^1(\Gamma))}\leq C\,.
\]
Furthermore, thanks to the assumptions \eqref{ip_beta}--\eqref{ip_beta_g}, 
the estimates \eqref{est2} and \eqref{est1'}, as well as the continuous inclusions
$V\embed L^6(\Omega)$ and $V_\Gamma\embed L^q(\Gamma)$ (for every $q\geq1$), we have that 
for every $q\in[1,+\infty)$
\beq\label{est23'}
  \norm{\beta_\lambda(u_\lambda)}_{L^\infty(0,T; L^{6/5}(\Omega))} +
  \norm{\beta_{\Gamma\lambda}(v_\lambda)}_{L^\infty(0,T; L^{q}(\Gamma))}
  \leq C
\eeq
for every $q\geq1$.
Consequently, testing \eqref{eq2_app} by the constants $\pm1$ we get
\[\begin{split}
  \pm|\Omega|(\mu_\lambda(t))_\Omega&\leq
  \int_\Omega|\lambda\partial_t u_\lambda + \alpha_\lambda(\partial_t u_\lambda) + \lambda u_\lambda
  +\beta_\lambda(u_\lambda)+T_\lambda\pi(u_\lambda)|(t) + |\Omega||(g_\lambda(t))_\Omega|\\
  &+ \int_\Gamma|\lambda\partial_t v_\lambda + \alpha_{\Gamma\lambda}(\partial_t v_\lambda)
  +\beta_{\Gamma\lambda}(v_\lambda)+T_\lambda\pi_\Gamma(v_\lambda)|(t) + |\Gamma||(g_{\Gamma\lambda}(t))_\Gamma|\,,
\end{split}\]
where the right-hand side is bounded in $L^\infty(0,T)$ thanks to the estimates already shown, 
\eqref{est14}--\eqref{est15} and by assumption \eqref{g'}. We infer together with \eqref{est16} that
\[
  \norm{\mu_\lambda}_{L^\infty(0,T; V)}\leq C\,.
\]
Furthermore, by comparison in \eqref{eq1_app} and the estimates \eqref{est14}--\eqref{est16} that 
\beq
  \label{est23}
  \norm{\mu_\lambda}_{L^\infty(0,T; V)\cap L^2(0,T; W_{\bf n}\cap H^3(\Omega))}
  + \norm{\partial_t u_\lambda}_{L^\infty(0,T; V^*)}\leq C\,.
\eeq
Again, if also \eqref{coerc1} holds, the same argument 
ensures that $J_\lambda\partial_t u_\lambda$ is uniformly bounded in $L^{\infty}(0,T; H)$,
hence also $\mu_\lambda$ in $L^\infty(0,T; W_{\bf n})$ form \eqref{eq1_app},
from which the last sentence of Theorem~\ref{thm3} follows.

Let us focus now on the main estimate. We know that the approximated 
problem can be written as
\[
  A_\lambda(\partial_t u_\lambda, \partial_t v_\lambda) + B_\lambda(u_\lambda,v_\lambda)=
  (g_\lambda,g_{\Gamma\lambda})-(T_\lambda\pi(u_\lambda), T_\lambda\pi_\Gamma(v_\lambda))\,,
\]
where the operators $A_\lambda$ and $B_\lambda$ have been introduced in Section~\ref{approx}.
Note that by \eqref{est2} and \eqref{est1'}, we have that $(u_\lambda, v_\lambda)_\lambda$ is bounded
in $L^\infty(0,T; \V)$: hence, by linearity and boundedness of the operator
\[
  (-\Delta, \partial_{\bf n} - \eps\Delta_\Gamma): \V\to \V^*\,,
\] 
we deduce that $(-\Delta u_\lambda, \partial_{\bf n}u_\lambda - \eps\Delta v_\lambda)_\lambda$
is bounded uniformly in $L^\infty(0,T; \V^*)$. Moreover, since $L^{6/5}(\Omega)\embed V^*$ and
$L^{q'}(\Gamma)\embed V_\Gamma^*$ for every $q'\in(1,+\infty]$, by \eqref{est23'} we deduce that 
$(\beta_\lambda(u_\lambda), \beta_{\Gamma\lambda}(v_\lambda))_\lambda$ is bounded in 
$L^\infty(0,T; \V^*)$ as well. Hence, we infer that 
\[
  \norm{B_\lambda(u_\lambda, v_\lambda)}_{L^\infty(0,T; \V^*)}\leq C\,.
\]
By comparison in the equation written above we have then
\[
  \norm{(\alpha_\lambda(\partial_t u_\lambda), \alpha_{\Gamma\lambda}(\partial_t v_\lambda))_\lambda}_{L^\infty(0,T; \V^*)}\leq C\,.
\]

Let us pass to the limit. The estimates that we have collected ensure that
there are
\begin{gather*}
  u \in W^{1,\infty}(0,T; V^*)\cap H^1(0,T; V)\cap L^\infty(0,T; W)\,, \\
  v \in W^{1,\infty}(0,T; H)\cap H^1(0,T; V_\Gamma)\cap L^\infty(0,T; W_\Gamma)\,,\\
  \mu \in L^\infty(0,T; W_{\bf n})\cap L^2(0,T; H^3(\Omega))\,,\\
  \xi \in L^\infty(0,T; L^{6/5}(\Omega))\,, \qquad \xi_\Gamma \in L^\infty(0,T; L^q(\Gamma)) \quad\forall\,q\in[1,+\infty)\,,\\
  \eta_w \in L^\infty(0,T; \V^*)\,,
\end{gather*}
such that, along a subsequence that we still denote by $\lambda$ for simplicity,
\begin{align*}
  u_\lambda \wstarto u \quad\text{in } W^{1,\infty}(0,T; V^*)\cap L^\infty(0,T; W)\,,& \qquad
  u_\lambda \wto v \quad\text{in } H^1(0,T; V)\,,\\
  v_\lambda \wstarto u \quad\text{in } W^{1,\infty}(0,T; H_\Gamma)\cap L^\infty(0,T; W_{\Gamma})\,,&
  \qquad v_\lambda \wto v \quad\text{in } H^1(0,T; V_\Gamma)\,,\\
  \mu_\lambda \wstarto \mu \quad\text{in } L^\infty(0,T; V)\,,&
  \qquad \mu_\lambda \wto \mu \quad\text{in } L^2(0,T; W_{\bf n}\cap H^3(\Omega))\,,\\
  \beta_\lambda(u_\lambda) \wstarto \xi \quad\text{in } L^\infty(0,T; L^{6/5}(\Omega))\,,& \qquad
  \beta_{\Gamma\lambda}(v_\lambda) \wstarto \xi_\Gamma \quad\text{in } L^\infty(0,T; H_\Gamma)\,,\\
  (\alpha_\lambda(\partial_t u_\lambda), \alpha_{\Gamma\lambda}(\partial_t v_\lambda))\wstarto\eta_w&
  \qquad\text{in } L^\infty(0,T; \V^*)
\end{align*}
and
\[
  \lambda u_\lambda \to 0 \quad\text{in } W^{1,\infty}(0,T; H)\,, \quad
  \lambda v_\lambda \to 0 \quad\text{in } W^{1,\infty}(0,T; H_\Gamma)\,, \quad
  \lambda\mu_\lambda\to0 \quad\text{in } L^\infty(0,T; H)\,.
\]
If the stronger condition \eqref{ip_beta'} is in order, then the continuous embedding $V\embed L^6(\Omega)$
and \eqref{est1'} imply that $(\beta_\lambda(u_\lambda))_\lambda$ is bounded in $L^\infty(0,T; H)$, from 
which $\xi \in L^\infty(0,T; H)$ as well.
Testing the approximated equations \eqref{eq2_app}--\eqref{eq3_app} by a generic element $(\varphi, \psi)\in \V$,
integrating by parts and letting $\lambda\to0^+$, it is a standard matter to check that
\[\begin{split}
    \int_\Omega\mu(t)\varphi&=\ip{\eta_w(t)}{(\varphi,\psi)}_\V + \int_\Omega\nabla u(t)\cdot\nabla\varphi + 
    \int_\Omega\left(\xi(t)+\pi(u(t))-g(t)\right)\varphi\\
    &+ \eps\int_\Gamma\nabla_\Gamma v(t)\cdot\nabla_\Gamma\psi
    +\int_\Gamma(\xi_\Gamma(t)+\pi_\Gamma(v(t))-g_\Gamma(t))\psi\,.
\end{split}\]
Moreover, proceeding as in the previous sections, we also have $\xi\in\beta(u)$ a.e.~in $Q$ and $\xi_\Gamma\in \beta_\Gamma(v)$
a.e.~in $\Sigma$. Finally, as in Section~\ref{limit}, comparing
the approximated equations \eqref{eq2_app}--\eqref{eq3_app} and the corresponding limit ones, we can infer that 
\[
   \limsup_{\lambda\searrow0}\left[\int_Q\alpha_\lambda(\partial_t u_\lambda)\partial_t u_\lambda
   +\int_\Sigma\alpha_{\Gamma\lambda}(\partial_t v_\lambda)\partial_t v_\lambda\right]\leq
   \int_0^T\ip{\eta_w(t)}{(\partial_t u(t), \partial_t v(t))}_\V\,dt\,,
\]
which implies by a well-known criterion on maximal monotonicity that 
$\eta_w \in \widetilde\alpha_w(\partial_t u, \partial_t v)$.


\section{The uniqueness result}
\setcounter{equation}{0}
\label{proof4}

In the hypotheses \eqref{ip_uniq}--\eqref{ip_uniq'} of Theorem~\ref{thm4}, we clearly have
$\xi_i=F'(u_i)-\pi(u_i)$ and $\xi_{\Gamma i}=F_\Gamma'(v_i)-\pi_\Gamma(v_i)$
for $i=1,2$. Now,
we write the difference of the equations
\eqref{1}--\eqref{3} at $i=1$ and $i=2$, test \eqref{1} by $\mu_1-\mu_2$, \eqref{2} by $-\partial_t(u_1-u_2)$
and sum: by standard computations, the monotonicity of $\alpha$ and \eqref{ip_uniq'} we obtain
\[\begin{split}
  \int_{Q_t}&|\nabla(\mu_1-\mu_2)|^2
  +\frac12\int_\Omega|\nabla (u_1-u_2)(t)|^2 +\frac\eps2\int_\Gamma|\nabla_\Gamma(v_1-v_2)(t)|^2
  +\widetilde{b_1}\int_{\Sigma_t}|\partial_t(v_1-v_2)|^2\\
  &\qquad+ \int_{Q_t}\left(F'(u_1)-F'(u_2)\right)\partial_t(u_1-u_2)
   + \int_{\Sigma_t}(F_\Gamma'(v_1)-F'_\Gamma(v_2))\partial_t(v_1-v_2)\leq0
\end{split}\]
for every $t\in[0,T]$. We are now inspired by the argument contained in
the works \cite[Thm.~2.2]{ef-zel} and \cite[p.~689]{mir-sch}: note that 
\[\begin{split}
  &\left(F'(u_1)-F'(u_2)\right)\partial_t(u_1-u_2) \\
  &\qquad= 
  \partial_t\left[F(u_1)-F(u_2)-F'(u_2)(u_1-u_2)\right]
  -\left[F'(u_1)-F'(u_2)-F''(u_2)(u_1-u_2)\right]\partial_t u_2
\end{split}\]
and similarly
\[\begin{split}
  &\left(F'_\Gamma(v_1)-F_\Gamma'(v_2)\right)\partial_t(v_1-v_2) \\
  &\qquad= \partial_t\left[F_\Gamma(v_1)-F_\Gamma(v_2)-F'_\Gamma(v_2)(v_1-v_2)\right]
  -\left[F_\Gamma'(v_1)-F_\Gamma'(v_2)-F''_\Gamma(v_2)(v_1-v_2)\right]\partial_t v_2\,,
\end{split}\]
so that 
\[\begin{split}
  &\int_{Q_t}|\nabla(\mu_1-\mu_2)|^2
  +\frac12\int_\Omega|\nabla (u_1-u_2)(t)|^2 +\frac\eps2\int_\Gamma|\nabla_\Gamma(v_1-v_2)(t)|^2
  +\widetilde{b_1}\int_{\Sigma_t}|\partial_t(v_1-v_2)|^2\\
  &\qquad+ \int_\Omega\left[F(u_1)-F(u_2)-F'(u_2)(u_1-u_2)\right](t)
   + \int_\Gamma\left[F_\Gamma(v_1)-F_\Gamma(v_2)-F'_\Gamma(v_2)(v_1-v_2)\right](t) \\
  &\leq\int_{Q_t}\left[F'(u_1)-F'(u_2)-F''(u_2)(u_1-u_2)\right]\partial_t u_2
  +\int_{\Sigma_t}\left[F_\Gamma'(v_1)-F_\Gamma'(v_2)-F''_\Gamma(v_2)(v_1-v_2)\right]\partial_t v_2
\end{split}\]
for every $t\in[0,T]$. Now, by the mean value theorem it is readily seen that
\begin{align*}
  F(u_1)-F(u_2)-F'(u_2)(u_1-u_2)&\geq -C_\pi|u_1-u_2|^2\,, \\
  F_\Gamma(v_1)-F_\Gamma(v_2)-F'_\Gamma(v_2)(v_1-v_2)&\geq
  -C_{\pi_\Gamma}|v_1-v_2|^2\,,
\end{align*}
while the usual Taylor expansion for $F'$ yields
\[
\left[F'(u_1)-F'(u_2)-F''(u_2)(u_1-u_2)\right]\partial_t u_2=
 \frac12F'''(\tilde{u}_{12})|u_1-u_2|^2\partial_t u_2
\]
for a certain $\tilde{u}_{12}$ between $u_1$ and $u_2$. Now, recall that
$\partial_t u_2 \in L^2(0,T; V)\embed L^2(0,T; L^6(\Omega))$ and 
$u_i \in L^\infty(0,T; W)\embed L^\infty(Q)$ for $i=1,2$: this implies in particular
that $F'''(\tilde u_{12})\in L^\infty(Q)$, 
because $F''' \in L^\infty_{loc}(\erre)$ by \eqref{ip_uniq}. Hence, recalling also that 
$u_1-u_2$ has null mean, we have that 
\[\begin{split}
  \int_{Q_t}F'''(\tilde{u}_{12})|u_1-u_2|^2\partial_t u_2&\leq
  \norm{F'''(\tilde{u}_{12})}_{L^\infty(Q)}\int_0^t\norm{\partial_t u_2(s)}_{L^6(\Omega)}\norm{|u_1-u_2|^2(s)}_{L^{6/5}(\Omega)}\,ds\\
  &\leq C\int_0^t\norm{\partial_t u_2(s)}_V\norm{\nabla(u_1-u_2)(s)}^2_H\,ds
\end{split}\]
for a certain constant $C>0$. Similarly,  we obtain
\[
  \int_{\Sigma_t}\left[F_\Gamma'(v_1)-F_\Gamma'(v_2)-F''_\Gamma(v_2)(v_1-v_2)\right]\partial_t v_2
  \leq C\int_0^t\norm{\partial_t v_2(s)}_{V_\Gamma}\norm{\nabla_\Gamma(v_1-v_2)(s)}^2_{H_\Gamma}\,ds\,.
\]
Furthermore, by the Young inequality we can write (updating the constant $C$ at each step)
\[\begin{split}
  C_\pi\int_\Omega|u_1-u_2|^2(t)&=2C_\pi\int_{Q_t}\partial_t(u_1-u_2)(u_1-u_2)\\
  &\leq
  \frac12\norm{\partial_t(u_1-u_2)}^2_{L^2(0,t; V^*)} +
   C\norm{u_1-u_2}^2_{L^2(0,t; V)}\\
   &\leq\frac12\int_{Q_t}|\nabla(\mu_1-\mu_2)|^2 + C\norm{\nabla(u_1-u_2)}^2_{L^2(0,t; H)}
\end{split}\]
and similarly
\[
C_{\pi_\Gamma} \int_\Gamma|v_1-v_2|^2(t)\leq
\frac{\widetilde{b_1}}2\int_{Q_t}|\partial_t(v_1-v_2)|^2+
  C\norm{\nabla_\Gamma(v_1-v_2)}^2_{L^2(0,t; H_\Gamma)}\,.
\]
Taking into account this information and rearranging the terms yields
\[\begin{split}
  &\frac12\int_{Q_t}|\nabla(\mu_1-\mu_2)|^2
  +\frac12\int_\Omega|\nabla (u_1-u_2)(t)|^2 +\frac\eps2\int_\Gamma|\nabla_\Gamma(v_1-v_2)(t)|^2
  +\frac{\widetilde{b_1}}2\int_{\Sigma_t}|\partial_t(v_1-v_2)|^2\\
  &\qquad\leq C\int_0^t(1+\norm{\partial_t u_2(s)}_{V})\norm{\nabla(u_1-u_2)(s)}^2_{H}\,ds\\
  &\qquad\quad+
  C\int_0^t(1+\norm{\partial_t v_2(s)}_{V_\Gamma})\norm{\nabla_\Gamma(v_1-v_2)(s)}^2_{H_\Gamma}\,ds
  \qquad\forall\,t\in[0,T]\,,
\end{split}\]
and the thesis follows from the Gronwall lemma.

In order to prove the second part of the theorem, we proceed in exactly the same way:
we test \eqref{1} by $\mu_1-\mu_2$, \eqref{eq_var} by -$(\partial_t (u_1-u_2), \partial_t(v_1-v_2))\in\V$
and sum.
The only difference here is that the estimate on the term involving $F'''$ has to performed
using the weaker regularity of the solutions and the hypothesis \eqref{ip_uniq''}, 
together with the fact that $V\embed L^6(\Omega)$, as follows:
\[\begin{split}
  &\int_{Q_t}F'''(\tilde{u}_{12})(u_1-u_2)\partial_t u_2\\
  &\qquad\leq
  M\norm{|Q|+|u_1|^3+|u_2|^3}_{L^\infty(0,T; H)}\int_0^t\norm{\partial_t u_2(s)}_{L^6(\Omega)}\norm{|u_1-u_2|^2(s)}_{L^{3}(\Omega)}\,ds\\
  &\qquad\leq C\left(1+\norm{u_1}^2_{L^\infty(0,T; V)}+\norm{u_2}^2_{L^\infty(0,T; V)}\right)
  \int_0^t\norm{\partial_t u_2(s)}_{V}\norm{\nabla(u_1-u_2)(s)}^2_{H}\,ds
\end{split}\]
Similarly, the term involving $F_\Gamma'''$ is handled using \eqref{ip_uniq'''}
and the inclusion $V_\Gamma\embed L^q(\Gamma)$ for every $q\in[1,+\infty)$.


\section{The asymptotic as $\eps\searrow0$}
\setcounter{equation}{0}
\label{proof5}

For every $\eps>0$, the septuple 
$(u_\eps,v_\eps,\mu_\eps,\eta_\eps,\xi_\eps,\eta_{\Gamma\eps},\xi_{\Gamma\eps})$
is the solution satisfying \eqref{u}--\eqref{3} given by Theorem~\ref{thm1}.
Hence, recalling how 
such solutions were built from the approximated ones, 
all the estimates that we performed in Section~\ref{proof1}
(and that are $\eps$-independent)
are preserved.
In particular, going back to Section~\ref{proof1}
and taking \eqref{est_init} into account, it is readily seen that
\begin{gather*}
  \norm{u_\eps}_{L^\infty(0,T; V)\cap H^1(0,T; H)} 
  + \norm{v_\eps}_{L^\infty(0,T; H^{1/2}(\Gamma))\cap H^1(0,T; H_\Gamma)} +
  \eps^{1/2}\norm{v_\eps}_{L^\infty(0,T; V_\Gamma)}\leq C\,,\\
  \norm{\mu_\eps}_{L^2(0,T; W_{\bf n})}\leq C\,,\\
   \norm{\eta_\eps}_{L^2(0,T; H)} + \norm{\eta_{\Gamma\eps}}_{L^2(0,T; H_\Gamma)}+
   \norm{\xi_\eps}_{L^2(0,T; H)} \leq C\,,\\
   \norm{\Delta u_\eps}_{L^2(0,T; H)} + 
   \norm{\partial_{\bf n}u_\eps - \eps\Delta_\Gamma v_\eps + \xi_{\Gamma\eps}}_{L^2(0,T; H_\Gamma)}\leq C\,.
\end{gather*}
By the classical results on elliptic regularity, we can only infer that 
\[
  \norm{\partial_{\bf n}u_\eps}_{L^2(0,T; H^{-1/2}(\Gamma))} + \eps^{1/2}\norm{\partial_{\bf n}u_\eps}_{L^2(0,T; H_\Gamma)}\leq c\,.
\]
Taking into account that $-\Delta_\Gamma:V_\Gamma\to V_\Gamma^*$ is continuous and monotone, we also have that 
\[
  \eps^{1/2}\norm{\Delta_\Gamma v_\eps}_{L^\infty(0,T; V_\Gamma^*)} + \eps^{3/2}\norm{\Delta_\Gamma v_\eps}_{L^2(0,T; H_\Gamma)}\leq c\,,
\]
which yields by interpolation
\[
  \eps\norm{\Delta v_\eps}_{L^2(0,T; H^{-1/2}(\Gamma))}\leq c\,,
\]
hence also, by comparison,
\[
  \norm{\xi_{\Gamma\eps}}_{L^2(0,T; H^{-1/2}(\Gamma))}\leq c\,.
\]
It readily seen that, along a subsequence $(\eps_n)_n$, the weak convergences
of Theorem~\ref{thm5} hold. Furthermore, by the classical compactness results
\cite[Cor.~4, p.~85]{simon} we also have
\[
  u_{\eps_n}\to u \quad\text{in } C^0([0,T]; H)\,, \qquad
  v_{\eps_n}\to v \quad\text{in } C^0([0,T]; H_\Gamma)\,,
\]
which yields $\xi \in \beta(u)$ a.e.~in $Q$ by the strong-weak closure of $\beta$.
Passing to the weak limit as $n\to\infty$ in \eqref{1}--\eqref{3} we deduce that 
$(u,v,\mu,\eta,\xi,\eta_\Gamma,\xi_\Gamma)$ satisfies the limit equations
stated in Theorem~\ref{thm5}.
Moreover, testing \eqref{1} by $\mu_\eps$, \eqref{2} by $-\partial_t u_\eps$ and summing we get
\[\begin{split}
  &\int_Q|\nabla\mu_\eps|^2+\int_{Q}\eta_\eps\partial_t u_\eps +
  \frac12\int_\Omega|\nabla u_\eps(T)|^2+\int_\Omega\widehat\beta(u_\eps(T))\\
   &\qquad+\int_\Sigma\eta_{\Gamma\eps}\partial_t v_\eps + \frac\eps2\int_\Gamma|\nabla_\Gamma v_\eps(T)|^2
   +\int_\Gamma\widehat\beta_\Gamma(v_\eps(T))
   =\frac12\int_\Omega|\nabla u_0^\eps|^2+\frac\eps2\int_\Gamma|\nabla_\Gamma u_0^\eps|^2\\
   &\qquad+ \int_\Omega\widehat\beta(u_0^\eps) + \int_\Gamma\widehat\beta_\Gamma(u_0^\eps)
  +\int_Q(g -\pi(u_\eps))\partial_t u_\eps + \int_\Sigma(g_\Gamma-\pi_\Gamma(v_\eps))\partial_t v_\eps\,,
\end{split}\]
from which, by standard weak lower semicontinuity results, the convergence $u_0^\eps\to u_0$ in $V$ and
the estimate \eqref{est_init},
\[\begin{split}
  \limsup_{n\to\infty}\left(\int_{Q}\eta_\eps\partial_t u_\eps+\int_\Sigma\eta_{\Gamma\eps}\partial_t v_\eps\right)&\leq
  \frac12\int_\Omega|\nabla u_0|^2+ \int_\Omega\widehat\beta(u_0) + \int_\Gamma\widehat\beta_\Gamma(u_0)\\
  &-\int_Q|\nabla\mu|^2-\frac12\int_\Omega|\nabla u(T)|^2-\int_\Omega\widehat\beta(u(T)) - \int_\Gamma\widehat\beta_\Gamma(v(T))\\
  &+\int_Q(g -\pi(u))\partial_t u + \int_\Sigma(g_\Gamma-\pi_\Gamma(v))\partial_t v\,.
\end{split}\]
Now, performing the analogue estimate on the limiting equations, we easily deduce that the right-hand side
coincides with
\[
  \int_Q\eta\partial_t u + \int_\Sigma\eta_\Gamma\partial_t v\,.
\]
Hence, we also have that
$\eta\in\alpha(\partial_t u)$ a.e.~in $Q$ and $\eta_\Gamma\in\alpha_\Gamma(\partial_t v)$
a.e.~in $\Sigma$.
It remains to prove that $\xi_\Gamma\in\beta_{\Gamma w}(v)$ a.e.~in $(0,T)$. To this end, we test
\eqref{1} by $\mathcal N(u_\eps-(u_0^\eps)_\Omega)$, $\eqref{2}$ by $-(u_\eps-(u_0^\eps)_\Omega)$, and sum:
\[\begin{split}
  &\norm{\nabla\mathcal{N}(u_\eps(T)-(u_0^\eps)_\Omega)}_H^2 + 
  \int_{Q}\eta_\eps(u_\eps-(u^\eps_0)_\Omega) + \int_{Q}|\nabla u_\eps|^2 + \int_Q\xi_\eps(u_\eps-(u^\eps_0)_\Omega)\\
  &\qquad+\int_{\Sigma}\eta_{\Gamma\eps}(v_\eps-(u^\eps_0)_\Omega) + \eps\int_\Sigma|\nabla_\Gamma v_\eps|^2
  +\int_\Sigma \xi_{\Gamma\eps}(v_\eps-(u^\eps_0)_\Omega)\\
  &\qquad=\int_Q(g-\pi(u_\eps))(u_\eps-(u^\eps_0)_\Omega) + \int_\Sigma(g_\Gamma-\pi_\Gamma(v_\eps))(v_\eps-(u^\eps_0)_\Omega)\,.
\end{split}\]
Now, recalling that $u_\eps-(u_0^\eps)_\Omega$ has null mean and that $u_\eps\to u$
in $C^0([0,T]; H)$, we have in particular that $u_\eps(T)-(u_0^\eps)_\Omega \to u(T)-(u_0)_\Omega$ in $V^*$,
hence also, by the properties of $\mathcal N$, $\mathcal N(u(T)_\eps-(u_0)_\Omega)\to\mathcal N(u(T)-(u_0)_\Omega)$ in $V$.
Furthermore, using the convergences already proved
and the weak lower semicontinuity of the norms, we infer that
\[\begin{split}
  &\limsup_{\eps\searrow0}\int_\Sigma\xi_{\Gamma\eps} v_\eps \leq
  \int_Q(g-\pi(u))(u-(u_0)_\Omega) + \int_\Sigma(g_\Gamma-\pi_\Gamma(v))(v-(u_0)_\Omega)
  -\int_Q\eta(u-(u_0)_\Omega)\\
  &+\int_\Sigma\xi_{\Gamma}(u_0)_\Omega-\norm{\nabla\mathcal{N}(u(T)-(u_0)_\Omega)}_H^2 - \int_Q|\nabla u|^2
  -\int_Q\xi(u-(u_0)_\Omega) - \int_\Sigma\eta_{\Gamma}(v-(u_0)_\Omega)\,.
\end{split}\]
As before, performing the same estimate on the limiting equations, we
see that the right-hand side coincides with
\[
  \int_0^T\ip{\xi_\Gamma(t)}{v(t)}_{H^{1/2}(\Gamma)}\,dt\,,
\]
and we can conclude by the maximal  monotonicity of $\beta_{\Gamma w}$.

Finally, if the further assumptions \eqref{g'}--\eqref{u0'_ter} hold and $(\eps u_{0|\Gamma}^\eps)_\eps$
is bounded in $W_\Gamma$, we can proceed similarly performing the estimates in
Section~\ref{proof2} instead. In particular, note that with these hypotheses
the constant $C$ appearing in Lemma~\ref{init_reg} is independent of $\eps$.
Hence, we infer
\begin{gather*}
  \norm{u_\eps}_{W^{1,\infty}(0,T; V^*)\cap H^1(0,T; V)}
  +\norm{v_\eps}_{W^{1,\infty}(0,T; H_\Gamma)\cap H^1(0,T; H^{1/2}(\Gamma))}
  +\eps^{1/2}\norm{v_\eps}_{H^1(0,T; V_\Gamma)}\leq c\,,\\
  \norm{\mu_\eps}_{L^\infty(0,T; V)\cap L^2(0,T; W_{\bf n}\cap H^3(\Omega))}\leq c\,,\\
  \norm{\eta_\eps}_{L^\infty(0,T; H)} + \norm{\eta_{\Gamma\eps}}_{L^\infty(0,T; H_\Gamma)}
  +\norm{\xi_\eps}_{L^\infty(0,T; H)}\leq c\,,\\
  \norm{\Delta u_\eps}_{L^\infty(0,T; H)} + \norm{\partial_{\bf n}u_\eps-\eps\Delta_\Gamma v_\eps + \xi_{\Gamma\eps}}_{L^\infty(0,T; H_\Gamma)}\leq c\,.
\end{gather*}
Now, arguing as before by elliptic regularity and interpolation arguments, we deduce that 
\[
  \norm{\partial_{\bf n}u_\eps}_{L^\infty(0,T; H^{-1/2}(\Gamma))} +
  \eps\norm{\Delta_\Gamma v_\eps}_{L^\infty(0,T; H^{-1/2}(\Gamma))}+\norm{\xi_{\Gamma\eps}}_{L^\infty(0,T; H^{-1/2}(\Gamma))}\leq c\,.
\]
Hence, 
the conclusion of the proof follows easily by a completely similar argument.


\end{document}